\newtheorem{theorem}{Theorem}
\newtheorem{proposition}[theorem]{Proposition}
\newtheorem{lemma}[theorem]{Lemma}
\newtheorem{corollary}[theorem]{Corollary}
\theoremstyle{definition}
\newtheorem{definition}[theorem]{Definition}
\theoremstyle{remark}
\newtheorem{remark}[theorem]{Remark}
\def\R{\mathbb{R}}
\def\pscal#1#2{\left\langle#1,\,#2\right\rangle}
\def\dist{d}
\def\osubjet{J^{2, -}_{\Omega}}
\def\osuperjet{J^{2, +}_{\Omega}}
\def\subjet{J^{2, -}_{\overline\Omega}}
\def \e{\varepsilon}
\def \ipo {(h\Omega)}
\def \ipu {(hu)} 
\def\csubjet{\overline{J}^{2, -}_{\overline\Omega}}
\def\X{\mathbf{X}}
\def\Xe{\X_{\epsilon}}
\DeclareMathOperator{\Cut}{\overline \Sigma}  
\DeclareMathOperator{\high}{M}
\DeclareMathOperator{\conv}{conv}
\DeclareMathOperator{\extr}{extr}
\DeclareMathOperator{\argmax}{argmax}
\begin{document}

\title[Inhomogeneous infinity Laplacian]%
{On the Dirichlet and Serrin problems for the \\ inhomogeneous infinity Laplacian in convex domains: \\ Regularity and geometric results}%
\author[G.~Crasta, I.~Fragal\`a]{Graziano Crasta,  Ilaria Fragal\`a}
\address[Graziano Crasta]{Dipartimento di Matematica ``G.\ Castelnuovo'', Univ.\ di Roma I\\
P.le A.\ Moro 2 -- 00185 Roma (Italy)}
\email{crasta@mat.uniroma1.it}

\address[Ilaria Fragal\`a]{
Dipartimento di Matematica, Politecnico\\
Piazza Leonardo da Vinci, 32 --20133 Milano (Italy)
}
\email{ilaria.fragala@polimi.it}

\keywords{}
\subjclass[2010]{Primary 49K20, Secondary 49K30, 35J70,  35N25.  }
\date{October 22, 2014; revised March 27, 2015}

\begin{abstract}  Given an open bounded subset $\Omega$ of $\mathbb{R}^n$, which is convex and satisfies an interior sphere condition,
we consider the pde $-\Delta_{\infty} u = 1$ in $\Omega$,  subject to the homogeneous boundary condition $u = 0$ on $\partial \Omega$. We prove that the unique solution to this Dirichlet problem is power-concave (precisely, 3/4 concave) and it is of class $C ^1(\Omega)$. We then investigate the overdetermined Serrin-type problem, formerly considered in \cite{butkaw}, obtained by adding the extra boundary condition $|\nabla u| = a$ on $\partial \Omega$; by using a suitable $P$-function we prove that, 
if $\Omega$  satisfies the same assumptions as above and in addition contains a ball with touches $\partial \Omega$ at two diametral points, then the existence of a solution to this Serrin-type problem implies that necessarily the cut locus and the high ridge of $\Omega$ coincide. In turn, in dimension $n=2$, this entails that $\Omega$ must be a stadium-like domain, and in particular it must be a ball in case its boundary is of class $C^2$.  
\end{abstract}

\maketitle

\section{Introduction}

\subsection{Setting of the problem.}
The infinity Laplacian is the differential operator defined for smooth functions $u$ by  
$$\Delta _\infty u := \nabla ^ 2 u \nabla u \cdot \nabla u\,.$$ 
It was firstly discovered by Aronsson in the sixties \cite{Aro}, and afterwards a fundamental contribution came by Jensen \cite{Jen}, 
who proved the well-posedness of the Dirichlet problem
$\Delta _\infty u = 0$ in $\Omega$ with $u = g$ on $\partial \Omega$, for every boundary datum 
$g \in C  (\partial \Omega)$.  
(Here and in general when dealing with the infinity Laplacian, solutions must be intended in the viscosity sense, as the operator is not in divergence form). Moreover, Jensen  proved that $u$ is
characterized by the variational property of being a so-called {\it absolute minimizing Lipschitz extension of $g$}, meaning that it minimizes the $L ^ \infty$ norm of the gradient on every set $A \subset \subset \Omega$, among all functions which have the same trace  on $\partial A$. 
In particular, this property justifies the name ``infinity Laplacian''; a general existence theory of  Calculus of Variations in the sup-norm and related Aronsson--Euler type equations has been later developed by Barron, Jensen and Wang \cite{BaJeWa}. 

An excellent paper  reviewing of the state of the art on problems involving the infinity-Laplacian up to 2004 
is \cite{ArCrJu}.   
In the last decade these problems have raised an increasing interest in the pde community, 
stimulated also by their connections with tug-of-war games (see e.g.\ \cite{PSSW}), and 
further progresses have been made in both existence and regularity theory. 

Concerning advances in the existence theory, a notable contribution has to be ascribed to Lu and Wang, who proved in particular the well-posedness 
of the Dirichlet problem
\begin{equation}\label{f:dirich}
\begin{cases} 
-\Delta_{\infty} u = 1 &\text{in}\ \Omega\\
u = 0 &\text{on}\ \partial\Omega\,, 
\end{cases}
\end{equation}
see  \cite{LuWang}, where the Authors deal also with the case of non-constant source terms with constant sign; more general source terms have been recently considered  in  \cite{BhMo}. 

Concerning regularity matters,  the mostly investigated case is the one of infinity harmonic functions:
they
have been proved to be  differentiable in any space dimension $n$  by Evans and Smart \cite{EvSm}, whereas their $C ^ {1, \alpha}$ regularity 
(which is the optimal one expected) has been proved only for  $n=2$ by Evans and Savin \cite{EvSav}, and remains a major open problem in higher dimensions. 
Recently, the everywhere differentiability property in any dimension $n$ has been extended by Lindgren  \cite{Lind} to a class of inhomogeneous Dirichlet problems including \eqref{f:dirich}
(see also \cite{SWZ} for the same kind of result for some  Aronsson-type equations).  
At present,
no $C^1$ type regularity result is available to the best of our knowledge for the  Dirichlet problem \eqref{f:dirich}.

A new investigation direction in this field has been suggested by Buttazzo and Kawohl in the pioneering paper \cite{butkaw}, where they started the study of the following overdetermined problem: 
\begin{equation}
\label{f:serrin}
\begin{cases}
-\Delta_{\infty} u = 1 &\text{in}\ \Omega\\
u = 0 &\text{on}\ \partial\Omega
\\
|\nabla u| = a &\text{on}\ \partial\Omega\,. 
\end{cases}
\end{equation}

The analogous problem with the classical Laplacian in place of the infinity Laplacian was studied by 
Serrin, who proved the seminal symmetry result stating that  existence of a solution implies that $\Omega$ is a ball \cite{Se}. For its mathematical beauty and the elegance of its proof, which is based on the moving planes method by Alexandrov, Serrin result has become a masterpiece in pde's. It has originated a huge amount of literature, including alternative proofs and many generalizations, about the cases when the Laplacian is replaced by a possibly degenerate elliptic operator and when the elliptic problem is stated on an exterior domain, or on a ring-shaped domain, or on a domain with not smooth boundary. Since it is impossible to give here an exhaustive bibliography on overdetermined boundary value problems, we limit ourselves to quote the papers \cite{BH, BrPr,  f, fg, fgk, GL, K2, Vog}, where many further relevant references can be found. 

Now what happens for problem \eqref{f:serrin} is that {\it all} the methods known in the literature to deal with overdetermined boundary value problems completely fail.
There are several deep reasons which may be addressed  for this fact, among which the high degeneracy of the operator, the failure of a strong maximum principle and the lack of regularity results for solutions to the Dirichlet problem. 
Actually, until now only a highly simplified version of problem \eqref{f:serrin} has been successfully investigated: it
consists in studying for which domains  $\Omega$ the unique solution $\overline u$ to the Dirichlet problem 
\eqref{f:dirich}
depends only on the distance $d _{\partial \Omega}$ from the boundary of $ \Omega$. Functions depending only on $d _{\partial \Omega}$ are called {\it web functions}, since if $\Omega$ is a polygon their level lines look like a spider web; for a short history of web functions, and an example of their application in variational problems, see \cite{CFGa}. 
If $\overline u$ is a web function, it has a constant normal derivative on $\partial \Omega$, and hence it solves \eqref{f:serrin}; on the other hand, it is clear that problem \eqref{f:serrin} might well have solutions which are {\it not} web functions. 

A  necessary and sufficient  condition for $\overline u$ being  a web function 
is the concidence between the cut locus and the high ridge of $\Omega$  (for their definition see the end of this Introduction). 
Let us emphasize that this geometric phenomenon
has been firstly discovered by 
Buttazzo and Kawohl in \cite{butkaw}. 
Afterwards, the same result has been proved in \cite{CFc} under milder regularity assumptions. 
A complete characterization of sets satisfying such geometric condition in two space dimensions has been provided  in \cite{CFb}, as parallel neighborhoods of $C ^ {1,1}$ one-dimensional manifolds
(in particular, they do not need to be balls, unless they are asked in addition to be simply connected and of class $C ^2$). 

This paper can be framed into the above described state of the art
(see also \cite{CFdproc} for a review), and deals with the following two mutually related topics:

\smallskip
(i) {\it About the  Dirichlet problem \eqref{f:dirich}}:
Does its unique solution  enjoy stronger regularity than everywhere differentiability? This question is relevant in connection with the study of the overdetermined problem \eqref{f:serrin}, but it has an autonomous interest 
since, as mentioned above, even the $C^1$ regularity of infinity harmonic functions is still object of investigation in dimension higher than $2$.

\smallskip
(ii) {\it About the Serrin-type problem \eqref{f:serrin}}:
If one does not work within the restricted class of web-functions, which kind of geometric information on $\Omega$ can be inferred from the existence of a solution? 
Is the coincidence of cut locus and high ridge still a necessary condition? 
The fact that one can no longer reduce the problem to an ODE for a function depending on $d _{\partial \Omega}$ increases dramatically the difficulty level, and some completely new approach is needed with respect to the methods employed in \cite{butkaw} and \cite{CFc}.  

\smallskip
\subsection{Outline of the results.}
As a  first step, in Section \ref{secconc} we prove a power-concavity result for the solution $\overline u$ to problem \eqref{f:dirich}: precisely we prove that it is $3/4$ concave, provided the domain $\Omega$ is convex and satisfies an interior sphere condition (see Theorem \ref{t:34}). 
This result, which is obtained by the convex envelope method introduced by Alvarez, Lasry and Lions in \cite{ALL}, yields as a crucial by-product that, under the same assumptions on $\Omega$, the solution $\overline u$ is locally semiconcave  (see Corollary \ref{c:locsemiconc}). 
We remark that a similar power-concavity property has been proved by Sakaguchi in \cite{Sak}
in the case of the $p$-Laplace operator.

In Section \ref{secdiffe} we exploit the local semiconcavity of the solution $\overline u$ in order to obtain its $C ^ 1$ regularity (see Theorem \ref{t:diff}). Incidentally, we provide an alternative  proof of the differentiability of $\overline u$ which works in convex domains and is completely different   with respect to the one given in \cite{Lind}. Actually, 
the main ingredient of our approach is a new estimate holding for locally semiconcave functions near singular points (see Theorem \ref{t:estid}): 
we use this estimate within  a contradiction argument, in order to construct ad hoc viscosity test functions
for problem \eqref{f:dirich}, which allows to conclude that $\overline u$ cannot have singular points. 

In Section \ref{secP} we introduce the $P$-function given by
$$P (x) := \frac{|\nabla \overline u| ^ 4 }{4} + \overline u\,.$$
Note that, thanks to the $C ^1$ regularity result obtained for $\overline u$, the function $P$ is continuous in $\Omega$.
The idea is that the existence of a solution to the overdetermined problem \eqref{f:serrin} 
(or equivalently the constancy of $|\nabla \overline u |$ over the boundary) might imply that $P$ is constant on the whole of $\Omega$. Actually,  
should the function $P$ be constant on the whole of $\Omega$, one would obtain immediately the information that cut locus and high ridge of $\Omega$ coincide (see Proposition \ref{p:P1}). 
In order to investigate the possible constancy of $P$, we study its behaviour along the steepest ascent lines of $\overline u$, intended as trajectories of the Cauchy problem $$
\begin{cases}
\dot \gamma (t) = \nabla \overline u (\gamma (t)) 
\\
\gamma (0) = x \in \overline \Omega\,.& 
\end{cases}
$$
Indeed, it is easy to see that the map $t \mapsto P (\gamma (t))$ has vanishing first order derivative almost everywhere (see Lemma \ref{l:easy}) and that, should the function $P$ be constant along a trajectory $\gamma$, one could  immediately compute the solution along it (see Proposition \ref{p:P2}). 

Unfortunately, the constancy of $P$ along a trajectory cannot be deduced from the vanishing property of the first derivative, because $\overline u$ is not known to be $C ^ {1,1}$ (and actually it cannot expected to be so, see below), 
so that the map $P \circ \gamma$ is {\it not} absolutely continuous. 

Nevertheless, we manage to get some control on the properties of trajectories, and to infer some
information on the {\it global} behavior of $P$ on $\Omega$. The approach 
we adopt consists in constructing the unique forward gradient flow associated with  $\overline u$ (which can be done
thanks to its semiconcavity, see Lemma \ref{l:geo}), and then approximating it by the sequence of gradient flows associated with  the supremum convolutions of $\overline u$ (which enjoy
$C ^ {1,1}$ regularity and, by the ``magical properties" of their superjets,  turn out to be sub-solutions to the pde, 
 see Lemma \ref{l:approx1}).  By this way, in Theorem \ref{t:ineqP}, we obtain the crucial estimates 
$$\min _{\partial \Omega} \frac{|\nabla \overline u| ^ 4 }{4} \leq P (x)  \leq \max _{\overline {\Omega}} \overline u
\qquad \forall  x \in \overline \Omega \,.
$$
These bounds can be used to infer  some information both on the geometry of domains on which problem \eqref{f:serrin} admits a solution, and on the regularity of the solution to problem \eqref{f:dirich}. 
This is done respectively in the last two sections of the paper. 
In Section \ref{secgeo} we prove  that the existence of a solution to problem \eqref{f:serrin} entails the coincidence of cut locus and high ridge provided the domain $\Omega$ is convex and contains an inner ball which touches $\partial \Omega$ at two diametral points (see Theorem \ref{t:serrin2}). For instance, this excludes existence of a solution to problem \eqref{f:serrin} when $\Omega$ is an ellipse. It is our belief that that both the convexity and the ``diametral touching ball'' conditions are not necessary for the validity of the result, but by now proving it in full generality remains an open problem. 
We wish to emphasize that combining Theorem \ref{t:serrin2} with the results proved in our previous paper \cite{CFb} reveals 
an interesting phenomenon which seems to be completely new in the field of overdetermined problems, and more generally in the interplay between geometry and pde's:  convex domains where problem \eqref{f:serrin}  
admits a solution may obey or not symmetry according to the regularity of their boundary; more precisely, in dimension $n=2$, they must be spherical as soon as they are of class $C ^2$, but may be nonspherical (precisely stadium-like domains) if they do not enjoy such regularity (see Corollary \ref{corgeo}). 
This seems somehow to reflect the fact that regularity properties for the solution to the pde finer than $C ^ 1$ are a delicate stuff.  
Such properties are discussed in the final
Section \ref{secreg} where, via the use of the $P$-function, 
we show that the expected optimal regularity of $\overline u$ is $C ^ {1, \alpha}$ with $\alpha \leq 1/3$
(for the precise statements see Propositions \ref{p:notreg1} and \ref{p:notreg2}).

\subsection {Some preliminary notions.} 

Let us specify  what we mean by a {\it solution} to problems \eqref{f:dirich} and  \eqref{f:serrin}. 
For convenience of the reader, let us first remind the definition of viscosity sub- and super-solutions.
Recall first that second order sub-jet (resp.\ super-jet),
$\osubjet u (x_0)$ (resp. $\osuperjet u (x_0)$), of a function $u\in C(\overline{\Omega})$
at a point $x_0\in {\Omega}$,  is by definition the set of pairs
$(p, A) \in \R ^n \times \R ^ { n \times n }_{{\rm sym}}$ such that
\[ u (y) \geq  (\leq) \ u ( x_0) + \pscal{ p}{y- x_0} 
+ \frac{1}{2} \pscal {A (y- x_0)}{y- x_0} + o ( |y - x_0|^2) 
\qquad \hbox{ as } 
 y \to x_0,\ y\in {\Omega}\,,
\]
Then,  following \cite{CHL},  
a viscosity subsolution  to the equation $-\Delta_\infty u -1 = 0$
is a function
$u\in C({\Omega})$  which, for every $x_0 \in \Omega$,  satisfies
\begin{equation}\label{f:subsol1}
-\Delta_\infty\varphi(x_0)-1\leq 0\
\quad
\text{whenever}\
\varphi\in C^2(\Omega)\
\text{and $u - \varphi$ has a local maximum at $x_0$},
\end{equation}
or equivalently
\begin{equation}\label{f:subsol2}
- \langle X p, p \rangle -1\leq 0\
\quad
\forall (p, X) \in J ^ { 2 , +} _\Omega u (x_0)\, .
\end{equation}
Similarly, a viscosity super-solution to the equation $-\Delta_\infty u -1 = 0$
is a function
$u\in C({\Omega})$  which, for every $x_0 \in \Omega$,  satisfies
\begin{equation}\label{f:supersol1}
-\Delta_\infty\varphi(x_0)-1\geq 0\
\quad
\text{whenever}\
\varphi\in C^2(\Omega)\
\text{and $u- \varphi$ has a local minimum at $x_0$},
\end{equation}
or equivalently
\begin{equation}\label{f:supersol2}
- \langle X p, p \rangle -1\geq 0\
\quad
\forall (p, X) \in J ^ { 2 , -} _\Omega u (x_0)\, .
\end{equation}

By a viscosity solution to the equation $-\Delta_\infty u -1 = 0$ 
we mean a function $u\in C(\overline{\Omega})$
which is both a viscosity sub-solution and a viscosity super-solution on $\Omega$.

By a solution to problem (\ref{f:dirich}), we mean a function $u\in C(\overline{\Omega})$ such that
$u=0$ on $\partial\Omega$ and $u$ is a viscosity solution
to $-\Delta_\infty u = 1$ in $\Omega$.

By saying that a the overdetermined boundary value problem (\ref{f:serrin}) admits a solution, we mean that 
the following regularity hypothesis is fulfilled
\begin{itemize}
\item[$\ipu$]\ \ \  the unique viscosity solution $u$
to problem \eqref{f:dirich}
satisfies 
$$
\exists \, \delta>0 \ :\ u \text{ is of class } C ^1 \text{ on } \{ x \in \overline \Omega \ :\ {\rm dist} (x, \partial \Omega) < \delta\}\,,
$$
\end{itemize}
and that $|\nabla u| = a$ on $\partial \Omega$.

Finally, let us introduce some definitions related to the distance function to the boundary of $\Omega$, which will be denoted by $d _{\partial \Omega}$. We let $\Sigma (\Omega)$ be the set of points in $\Omega$ where $d_{\partial \Omega}$ is not differentiable, and we call {\it cut locus} and {\it high ridge}
the sets given respectively by
\begin{eqnarray} 
\hbox{$\Cut(\Omega)$ := the closure of   $\Sigma (\Omega)$ in $\overline \Omega$
} \qquad  \qquad& \label{cut} \\ \noalign{\smallskip}
\hbox{$\high (\Omega)$ := the set  where $d _{\partial \Omega}(x) = \rho _\Omega:= \max _{ \overline \Omega} d _{\partial \Omega}\,. $ } & \label{high}
\end{eqnarray}
Moreover, we denote by $\phi _\Omega$ the web-function defined on $\Omega$ by 
\begin{equation}\label{defphi} 
\phi _\Omega (x) : = c_0 \left[\rho_\Omega ^{4/3} - (\rho_\Omega - \dist_{\partial \Omega}(x))^{4/3}\right] \,, \qquad \hbox{ where } c_0 := 3^{4/3} / 4\,.
\,  
\end{equation}

\section{Power-concavity and semiconcavity of solutions}\label{secconc}

Throughout the paper, $\Omega$ denotes a nonempty open bounded subset of $\R ^n$. 

Most of our results will be proved under the following 
additional hypothesis (which however will be specified in each statement):

\begin{itemize}
\item[$\ipo$]  \ \ \
$\Omega$ is  convex  and 
satisfies an interior sphere condition. 
\end{itemize}

\begin{theorem}\label{t:34}
Assume $\ipo$, and let $u$ be the solution to problem $(\ref{f:dirich})$. 
Then $u ^ {3/4}$ is concave in $\Omega$. \end{theorem}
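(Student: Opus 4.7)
My plan is to follow the convex envelope method of Alvarez, Lasry and Lions \cite{ALL} after a power transformation. I would set $v := u^{3/4}$: from $u = v^{4/3}$ a direct computation yields
\[
\Delta_\infty u \;=\; \tfrac{64}{27}\,v\,\Delta_\infty v \;+\; \tfrac{64}{81}\,|\nabla v|^4,
\]
so, inside $\Omega$ (where $u>0$ by comparison with $0$), the original equation $-\Delta_\infty u = 1$ is equivalent, in the viscosity sense, to
\[
-\,v\,\Delta_\infty v \;-\; \tfrac{1}{3}\,|\nabla v|^4 \;=\; \tfrac{27}{64},
\qquad v = 0 \text{ on } \partial\Omega.
\]
The exponent $3/4$ is dictated precisely by the requirement that the resulting right-hand side be a positive constant with no surviving $v^{-1}$ factor: this is the structural fact that opens the door to the envelope method.

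Since $\Omega$ is convex, the concave envelope $v^{**}$ of $v$ on $\Omega$ is well defined, with $v^{**}\geq v$ in $\Omega$ and $v^{**} = v = 0$ on $\partial \Omega$. The theorem follows from the reverse inequality $v^{**} \leq v$, since then $v = v^{**}$ is concave. To obtain this, I would show that $v^{**}$ is itself a viscosity sub-solution of the transformed equation and then invoke a comparison principle, together with the matching zero boundary data, to conclude. The verification that $v^{**}$ is a sub-solution is the heart of the argument: at any $x_0$ where $v^{**}(x_0) > v(x_0)$, Carath\'eodory's theorem expresses $v^{**}(x_0) = \sum_i \lambda_i v(x_i)$ with $\sum_i \lambda_i x_i = x_0$, and one propagates the super-solution information for $v$ at the points $x_i$ to a sub-solution inequality for $v^{**}$ at $x_0$. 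This would be done through the Crandall--Ishii jet machinery, exploiting the joint behavior of the nonlinearity $(v,\nabla v,\nabla^2 v)\mapsto -v\Delta_\infty v - \tfrac{1}{3}|\nabla v|^4$ under convex combinations.

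As for the hypotheses, convexity of $\Omega$ is essential simply to be able to define the concave envelope, while the interior sphere condition is used to secure enough Lipschitz/H\"older regularity of $u$ up to $\partial\Omega$ (in the spirit of \cite{LuWang,BhMo}), so that the boundary behaviour of $v$ is controllable and the comparison principle can be applied cleanly to the transformed equation. The hardest part is the envelope--is--sub-solution step: the transformed PDE is quadratic in the unknown through the product $v\,\Delta_\infty v$ and quartic in $\nabla v$, so the standard concavity-preservation arguments do not transfer verbatim; one has to execute the Alvarez--Lasry--Lions scheme carefully to handle both the degenerate ellipticity and the non-standard coupling of $v$, $\nabla v$, and $\nabla^2 v$, and the specific exponent $3/4$ is precisely what makes all the delicate cancellations work out.
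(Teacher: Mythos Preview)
Your overall strategy---the Alvarez--Lasry--Lions envelope method after the power change $v=u^{3/4}$, followed by a comparison principle---is exactly the paper's route (the paper works with $w=-u^{3/4}$ and its convex envelope $w_{**}$, which is the same thing up to a sign). The transformed equation and the choice of exponent are correct.

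There is, however, a genuine gap in your plan concerning the role of the interior sphere condition. You attribute it to ``Lipschitz/H\"older regularity of $u$ up to $\partial\Omega$'' so that ``the comparison principle can be applied cleanly''; this is not what it is for, and comparison for the transformed problem follows from the comparison principle for \eqref{f:dirich} with no extra hypothesis. The actual obstruction in the envelope step is this: when you write $v^{**}(x_0)=\sum_i\lambda_i v(x_i)$ via Carath\'eodory, some of the $x_i$ may lie on $\partial\Omega$, where $v(x_i)=0$ and the PDE carries no information; the ALL jet propagation then has nothing to feed on at those vertices, and the sub-solution inequality for $v^{**}$ cannot be derived. The paper handles this with a dedicated lemma (Lemma~\ref{l:emptyjet}) showing that no $x_i$ can lie on $\partial\Omega$. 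Its proof uses the interior sphere condition to compare $u$ with a cone inside an inscribed ball, yielding $u(x)\geq c\,d_{\partial\Omega}(x)$ near the boundary and hence that $v=u^{3/4}$ has infinite inward normal slope at every boundary point; a point with infinite slope cannot appear as an optimal Carath\'eodory vertex because the affine function realizing $v^{**}$ on the simplex would then be forced to dominate $v$ nearby, contradicting optimality. This is precisely where $\ipo$ enters, and your outline does not account for it.

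A minor slip: in your sentence ``one propagates the super-solution information for $v$ at the points $x_i$ to a sub-solution inequality for $v^{**}$'', the word ``super-solution'' should be ``sub-solution''. In the concave-envelope direction you test $v^{**}$ from above (a super-jet of $v^{**}$ at $x_0$), the ALL proposition produces elements of the (closed) super-jets of $v$ at the $x_i$, and it is the \emph{sub}-solution inequality for $v$ at those points that you then combine.
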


Before proving Theorem \ref{t:34}, we observe that it readily implies the following semiconcavity result. 
We recall that  $u: \Omega \to \R$ is called {\it semiconcave (with constant $C$) in $\Omega$} if 
\[
u(\lambda x + (1-\lambda)y) \geq \lambda u(x) + (1-\lambda) u(y)
-C\frac{\lambda(1-\lambda)}{2}\, |x-y|^2
\qquad \forall [x,y] \subset \Omega \ \text { and } \ \forall \lambda \in [0,1]\,.
\]
We say that $u$ is {\it locally semiconcave in $\Omega$} if it is semiconcave on compact subsets of $\Omega$. 

\begin{corollary}\label{c:locsemiconc}
Assume $\ipo$, and let $u$ be the solution to  problem $(\ref{f:dirich})$. 
Then $u$ is locally semiconcave in $\Omega$.
\end{corollary}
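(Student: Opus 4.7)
The plan is to deduce the local semiconcavity of $u$ from the concavity of $v := u^{3/4}$ (Theorem \ref{t:34}) by an elementary composition argument, after showing that $v$ stays uniformly positive on compact subsets of $\Omega$.

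First I would verify that $u>0$ strictly in $\Omega$, which is immediate from the viscosity super-solution inequality: at any interior minimum $x_0$ one has $(0,0) \in J^{2,-}_\Omega u(x_0)$, so testing \eqref{f:supersol2} would yield $-1 \geq 0$, a contradiction. Hence $v>0$ in $\Omega$. Fix now an arbitrary compact set $K \subset \Omega$; replacing it by its convex hull $\conv(K) \subset \Omega$ (compact since $K$ is, convex since $\Omega$ is), I may assume $K$ is convex. By Theorem \ref{t:34}, $v$ is concave on the open convex set $\Omega$, hence locally Lipschitz, and in particular Lipschitz on $K$ with some constant $L$; by continuity together with the strict positivity $v>0$, there is also a uniform lower bound $v \geq m > 0$ on $K$.

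The conclusion will follow by writing $u = \phi(v)$ with $\phi(t) := t^{4/3}$ and exploiting the fact that $\phi''(t) = \tfrac{4}{9}\, t^{-2/3}$ is bounded by $M := \tfrac{4}{9}\, m^{-2/3}$ on $[m,\infty)$. Since $t \mapsto \phi(t) - \tfrac{M}{2} t^2$ is then concave on $[m,\infty)$, a one-dimensional convexity-gap inequality yields
$$\lambda\,\phi(a) + (1-\lambda)\phi(b) - \phi\bigl(\lambda a + (1-\lambda)b\bigr) \leq \frac{M}{2}\, \lambda(1-\lambda)\,(a-b)^2 \qquad \forall\, a,b \geq m,\ \lambda \in [0,1].$$
Setting $a = v(x)$, $b = v(y)$ for $x,y \in K$ and $z = \lambda x + (1-\lambda) y \in K$, and using in order the monotonicity of $\phi$, the concavity of $v$ (so $v(z) \geq \lambda a + (1-\lambda) b$), and the Lipschitz bound $|a-b| \leq L|x-y|$, I will obtain
$$u(z) \geq \lambda\, u(x) + (1-\lambda)\, u(y) - \frac{M L^2}{2}\, \lambda(1-\lambda)\,|x-y|^2,$$
which is precisely the semiconcavity of $u$ on $K$ with constant $C = M L^2$.

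I do not foresee any substantial obstacle: the statement is essentially a routine consequence of Theorem \ref{t:34}. The only point deserving attention is the uniform positivity of $v$ on compact subsets, without which $\phi''(v)$ would blow up near the boundary — a degeneracy which is in fact unavoidable, consistent with the fact that $u$ cannot be expected to be semiconcave up to $\partial\Omega$.
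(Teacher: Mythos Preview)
Your proof is correct and follows essentially the same route as the paper: both arguments write $u=\phi(v)$ with $\phi(t)=t^{4/3}$, use that $\phi(t)-\tfrac{M}{2}t^2$ is concave on $[m,\infty)$ once $v\geq m>0$ on the compact set, and then combine the concavity of $v$ with its Lipschitz bound to get the semiconcavity inequality with constant $ML^2$. The only cosmetic differences are that the paper works on the superlevel sets $U_\epsilon=\{u\geq\epsilon\}$ rather than arbitrary compact convex $K$, and handles the strict positivity of $u$ in a separate remark (Remark~\ref{r:pos}) via comparison; your subjet argument for positivity is an acceptable variant.
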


\proof Given $\epsilon > 0$, we claim that $u$
is semiconcave with constant $C_{\epsilon} := 4 \epsilon^{-1/2} M_\epsilon^2 / 9$
in the set $U_{\epsilon} := \{x\in\Omega:\ u(x) \geq\epsilon\}$,
where 
$M_\epsilon$ is the Lipschitz constant of $w:= u ^ {3/4}$ on the
compact set $U_\epsilon$.
Namely,
the function
$\psi(t) := t^{4/3} - 2 \epsilon^{-1/2} t^2 / 9$ is
concave in $[\epsilon^{3/4}, +\infty)$. 
Then the inequality  $\psi \big  ( \lambda w(x) + (1- \lambda) w ( y) \big ) \geq \lambda \psi (w(x)) + ( 1- \lambda) \psi ( w (y))$ entails
\[
w(\lambda x + (1-\lambda)y)^{4/3}
\geq \lambda w(x)^{4/3} + (1-\lambda)w(y)^{4/3}
-\frac{2\epsilon^{-1/2}}{9}\, \lambda(1-\lambda)
|w(x)-w(y)|^2
\]
for every $[x,y]\subset U_{\epsilon}$ and $\lambda\in [0,1]$.
On the other hand, 
we have 
$|w(x) - w(y)| \leq M_\epsilon |x-y|$, hence we obtain
\[
u(\lambda x + (1-\lambda)y) \geq \lambda u(x) + (1-\lambda) u(y)
-C_\epsilon\frac{\lambda(1-\lambda)}{2}\, |x-y|^2,
\]
i.e., $u$ is semiconcave with semiconcavity constant $C_\epsilon$
in $U_\epsilon$.
\qed 

\bigskip

The remaining of this section is devoted to the proof of Theorem \ref{t:34}.  We start with an elementary observation which will be exploited several times throughout the paper.

\begin{remark}\label{r:pos}
The viscosity solution $u$ to problem \eqref{f:dirich} is strictly positive in $\Omega$. 
Indeed, it is nonnegative by the comparison result proved in  
 \cite[Thm.\ 3]{LuWang}. Assume by contradiction that $u (x_0) = 0$ at some point $x _0 \in \Omega$. Then 
 the function $\varphi \equiv 0$ touches $u$ from below at $x_0$, and hence
$u$ cannot be a viscosity supersolution to the equation $- \Delta 
_{\infty}  u =1$ at $x_0$.  
\end{remark}

If $u$ is the solution to \eqref{f:dirich}, 
for every $\alpha \in (0,1)$ the function
$w := -u^{\alpha}$ (which is strictly negative in $\Omega$ by Remark \ref{r:pos})
is a viscosity solution of
\[
\begin{cases}
-\Delta_\infty w - \frac{1-\alpha}{\alpha}\cdot \frac{1}{w}\, |\nabla w|^4
+ \alpha^3 (-w)^{3-3/\alpha} = 0
&\text{in}\ \Omega,
\\
w = 0
&\text{on}\ \partial\Omega.
\end{cases}
\]
In order to prove Theorem \ref{t:34}, we are going to choose $\alpha = 3/4$ and show that, if $w$ is a viscosity solution to  
\begin{equation}
\label{f:probmod}
\begin{cases}
- \Delta _ \infty w - \frac{1}{w} \Big [\frac{1}{3} |\nabla w| ^ 4 + \Big ( \frac{3}{4} \Big ) ^ 3 \Big ] = 0
&\text{in}\ \Omega,
\\
w = 0
&\text{on}\ \partial\Omega,
\end{cases}
\end{equation}
then $w$ is convex. 
To that aim, we adopt the convex envelope method introduced by Alvarez, Lasry and Lions in \cite{ALL}.  Following their notation, we denote by $w_{**}$  the largest convex function below $w$. We first show that, for every $x \in \Omega$, in the characterization
\[
w_{**} (x)  = \inf \left\{ 
 \sum _ {i = 1} ^ k \lambda _i w (x_i) \ :\ x = \sum _{i=1} ^k \lambda _ i x _i \, ,\ 
 x_i \in \overline \Omega\, , \ \lambda _i >0\, ,\ \sum  _ {i = 1} ^ k \lambda _i = 1\, , \ k \leq n+1 \right\}\,
\] 
the infimum can be attained only at interior points $x_i \in \Omega$:

\begin{lemma}\label{l:emptyjet}
Assume $\ipo$, and let $u$ be the  solution to problem \eqref{f:dirich}. Set
 $w := - u^{3/4}$.
For a fixed $x\in\Omega$, let $x_1,\ldots,x_k\in\overline{\Omega}$,
$\lambda_1,\ldots,\lambda_k > 0$, with  $\sum_{i=1} ^ k \lambda _ i = 1$, be such that
\[
x= \sum_{i=1}^k \lambda_i x_i\, , \quad
w_{**}(x) = \sum_{i=1}^k \lambda_i w(x_i).
\] 
Then $x_1, \ldots, x_k\in\Omega$.
\end{lemma}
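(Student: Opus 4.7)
The strategy is a contradiction argument of Alvarez--Lasry--Lions type: assuming some $x_i$ sits on $\partial\Omega$, I will build an admissible perturbation that produces a strictly smaller convex combination, contradicting the characterization $w_{**}(x)=\inf \sum_i \lambda_i w(x_i)$. The mechanism is that $w=-u^{3/4}$ has \emph{infinite} inward directional derivative at $\partial\Omega$, so pushing a boundary point slightly inside decreases $w$ by an amount of order $\epsilon^{3/4}$, which beats the $O(\epsilon)$ cost of compensating the perturbation at an interior point.

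First I would observe that not all of the $x_i$ can lie on $\partial\Omega$: otherwise $w(x_i)=-u(x_i)^{3/4}=0$ for every $i$, so $w_{**}(x)=0$, whereas $w_{**}(x)\le w(x)<0$ by Remark \ref{r:pos}. Hence, relabeling if necessary, $x_k\in\Omega$. Suppose for contradiction that $x_1\in\partial\Omega$. By hypothesis $(h\Omega)$, there is an interior ball $B_r(y_1)\subset\Omega$ with $x_1\in\partial B_r(y_1)$; let $\nu:=(y_1-x_1)/r$ be the inward unit normal. The direct computation carried out for the radial profile $c_0[r^{4/3}-|\cdot-y_1|^{4/3}]$ shows it solves $-\Delta_\infty\phi=1$ in $B_r(y_1)$ with zero boundary datum, so by the comparison principle of Lu--Wang, $u\ge\phi$ on $B_r(y_1)$. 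Evaluating along the segment $x_1+t\nu$ for small $t\in(0,r)$, I obtain
\[
u(x_1+t\nu)\ \ge\ c_0\bigl[r^{4/3}-(r-t)^{4/3}\bigr]\ \ge\ C\,t,
\]
and therefore
\[
w(x_1+t\nu)\ \le\ -C'\,t^{3/4}.
\]

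Next I perform the perturbation. For $\epsilon>0$ small set
\[
x_1^\epsilon:=x_1+\epsilon\nu,\qquad x_k^\epsilon:=x_k-\frac{\lambda_1}{\lambda_k}\,\epsilon\,\nu,\qquad x_i^\epsilon:=x_i\ \ (i\neq 1,k).
\]
By construction $\sum_i\lambda_i x_i^\epsilon=x$, and since $x_k\in\Omega$ is an interior point, both $x_1^\epsilon$ and $x_k^\epsilon$ belong to $\overline\Omega$ for $\epsilon$ small. Because $u$ is globally Lipschitz on $\overline\Omega$ (a standard fact for viscosity solutions of $-\Delta_\infty u=1$) and $u(x_k)>0$, the function $w=-u^{3/4}$ is Lipschitz in a neighborhood of $x_k$; hence $w(x_k^\epsilon)=w(x_k)+O(\epsilon)$. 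Combining with the one-sided estimate above and $w(x_1)=0$,
\[
\sum_{i=1}^k \lambda_i w(x_i^\epsilon) - \sum_{i=1}^k \lambda_i w(x_i)\ \le\ -\lambda_1 C'\,\epsilon^{3/4}+O(\epsilon)\ <\ 0
\]
for $\epsilon$ sufficiently small. Since $(x_i^\epsilon,\lambda_i)$ is an admissible representation of $x$ by points of $\overline\Omega$, this contradicts $\sum_i\lambda_i w(x_i)=w_{**}(x)$, which is the infimum over such representations.

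\textbf{Main obstacle.} The whole argument hinges on the one-sided lower bound $u(x_1+t\nu)\gtrsim t$ near $\partial\Omega$: this is exactly where the interior sphere condition is used, and it is also the reason why the exponent $3/4$ is critical, since it makes the gain $-\epsilon^{3/4}$ dominate the compensating Lipschitz cost $O(\epsilon)$. Ensuring the comparison with the explicit radial solution on the interior ball is therefore the key technical step; the rest of the proof is a routine convex-combination perturbation.
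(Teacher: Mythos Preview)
Your proof is correct and reaches the same conclusion through a route that differs from the paper's in two respects.

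For the key boundary estimate $w(x_1+t\nu)\leq -C't^{3/4}$, you compare $u$ directly with the explicit radial solution $c_0[r^{4/3}-|\cdot-y_1|^{4/3}]$ on the interior ball via the Lu--Wang comparison principle. The paper instead appeals to the comparison-with-cones property of Crandall--Evans--Gariepy to obtain the linear lower bound $u(z)\geq u(y)\bigl(1-|z-y|/R\bigr)$ on the interior ball $B_R(y)$, and only then raises to the power $3/4$. For the contradiction mechanism, the paper does not perturb the representation $(x_i,\lambda_i)$ at all: it notes that $w_{**}$ is affine on the simplex $\conv\{x_1,\dots,x_k\}$ and in particular on the segment $[x_1,x]$, with $w_{**}(x_1)=w(x_1)=0$; hence along the direction $\zeta:=(x-x_1)/|x-x_1|$ one has $w_{**}(x_1+t\zeta)=-\mu t$ for some $\mu>0$, and the inequality $w\geq w_{**}$ forces $-\mu t\leq w(x_1+t\zeta)\leq -K t^{3/4}+o(t^{3/4})$, which is impossible as $t\to 0^+$. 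Your two-point perturbation is perhaps more self-contained, since it stays entirely within the infimum characterization of $w_{**}$ and avoids the CEG machinery; the paper's argument, in return, is a bit slicker in that it never needs the preliminary observation that some $x_k$ lies in the interior to absorb a compensating displacement.
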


\begin{proof}
Assume by contradiction that at least one of the $x_i$'s, say $x_1$,
belongs to $\partial\Omega$.
Let $B _ R (y)\subset \Omega$ be a ball such that $\partial B _ R (y) \cap \partial \Omega = \{ x_1 \}$.
Since $-\Delta_\infty u = 1$, by Lemma 2.2 in \cite{CEG} the function $\tilde u := -u$ enjoys the property of comparison with cones from above according to Definition 2.3 in the same paper. 
Then, by Lemma 2.4 in \cite{CEG}, the function
\[
r \mapsto \max _{x \in \partial B _ r (y)} \frac{\tilde u (x) -  \tilde u  (y)}{r} = - \min  _{x \in \partial B _ r (y)}  \frac{ u (x) -  u  (y)}{r}
\]
is monotone nondecreasing on the interval $(0, R)$. 
Namely, for all $r \in (0, R)$, there holds
\begin{equation}
\label{f:cones2}
\min_{x \in \partial B _ r (y)} \frac{ u (x) -  u  (y)}{|x-y|}  \geq \min_{x \in \partial B _ R (y)} \frac{ u (x) -  u  (y)}{|x-y|} = -  \frac{u (y)} {R}\ ,
\end{equation}
where the last equality comes from the fact that
$u$ is non-negative in $\Omega$ (cf.\ Remark \ref{r:pos}).  
By \eqref{f:cones2}, we have
\[
u (x) \geq u (y ) \Big ( 1 - \frac{|x-y|}{R} \Big ) \qquad \forall x \in B _R (y) \, ,
\]
and hence
\begin{equation}
\label{f:upbd}
w (x) \leq w (y ) \Big ( 1 - \frac{|x-y|}{R} \Big ) ^ {3/4} \qquad \forall x \in B _R (y) \, .
\end{equation}
Let us define the unit vector $\zeta := (x-x_1) / |x-x_1|$
and let $\nu = (y-x_1) / |y-x_1|$ denote the inner normal of 
$\partial\Omega$ at $x_1$.
Since $\Omega$ is a convex set and $x\in\Omega$, we have
that $\pscal{\zeta}{\nu} > 0$
and $x_1 + t\zeta\in B_R(y)$ for $t>0$ small enough.
Moreover, $w_{**}$ is affine on $[x_1, x]$: indeed, since the epigraph of $w_{**}$ is the convex envelope of the epigraph of $w$, it is readily seen that $w_{**}$ is affine on the whole set of convex combinations of the points $\{x_1, \dots, x_k \}$. 
Taking into account that $w_{**}(x_1) = w(x_1) = 0$, we infer that
there exists $\mu > 0$ such that
\[
w(x_1 + t\zeta) \geq w_{**}(x_1+t \zeta) = - \mu t
\qquad\forall t\in [0,1].
\] 
{}From \eqref{f:upbd} we obtain
\[
-\mu t \leq w(y) \left( 1 - \frac{|t\zeta - R\nu|}{R}\right)^{3/4}
= w(y) \left(\pscal{\zeta}{\nu} \frac{t}{R} + o(t)\right)^{3/4},
\qquad t\to 0^+,
\]
and, recalling that $w(y) < 0$, 
\[
\mu t^{1/4} \geq K + o(1),\qquad t\to 0^+
\]
with $K>0$, a contradiction.
\end{proof}

\begin{remark}
As a consequence of \eqref{f:upbd},
taking $x = x_1 + \lambda \nu$, 
and recalling that $w (x_1) = 0$,
it is readily seen that
\[
\lim _{\lambda \to 0 ^ +} 
\frac{w ( x_1 + \lambda \nu) - w (x_1)}{ \lambda} 
\leq  \lim _{\lambda \to 0 ^ +} \frac{w ( y)}{ \lambda} \Big ( \frac{\lambda}{R} \Big ) ^ {3/4} = - \infty\,,
\]
i.e.\ the normal derivative of $w$ with respect to
the external normal is $+\infty$ at every boundary point of $\Omega$.
This is the reason why the semiconcavity property of $u$ is stated just {\it locally} in $\Omega$ 
and not up to the boundary, cf.\ the proof of Corollary \ref{c:locsemiconc}.

\end{remark}

\bigskip
On the basis of the lemma just proved, we can now establish that the convex envelope of a super-solution to 
\eqref{f:probmod} is still a super-solution. 

\begin{proposition}\label{p:All}
Assume $\ipo$. If $w$ is a viscosity super-solution to \eqref{f:probmod}, 
then also $w_{**}$ is a viscosity 
super-solution to the same problem.
\end{proposition}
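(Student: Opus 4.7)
The plan is to prove Proposition \ref{p:All} by the convex envelope method of Alvarez, Lasry and Lions \cite{ALL}. Fix $x_0\in\Omega$ and a $C^2$ test function $\varphi$ such that $w_{**}-\varphi$ attains a local minimum at $x_0$, normalized so that $\varphi(x_0)=w_{**}(x_0)$. My goal is to verify
\[
-\Delta_{\infty} \varphi(x_0) - \frac{1}{\varphi(x_0)}\Bigl[\tfrac{1}{3}|\nabla\varphi(x_0)|^4 + (3/4)^3\Bigr] \geq 0.
\]
First note that $w_{**}(x_0)<0$: by Lemma \ref{l:emptyjet}, any Carath\'eodory decomposition $x_0=\sum_{i=1}^k \lambda_i x_i$ (with $k\leq n+1$, $\lambda_i>0$, $\sum_i\lambda_i=1$) realizing $w_{**}(x_0)=\sum_i \lambda_i w(x_i)$ has all $x_i\in\Omega$, and by Remark \ref{r:pos}, $w=-u^{3/4}<0$ in $\Omega$. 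If $p:=\nabla\varphi(x_0)=0$, the target inequality reduces to $-(3/4)^3/w_{**}(x_0)\geq 0$, which is immediate.

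For $p\neq 0$, the central construction is to produce, for each $i\in\{1,\ldots,k\}$, a $C^2$ test function $\psi_i$ touching $w$ from below at $x_i$. A natural candidate is
\[
\psi_i(y) := \frac{1}{\lambda_i}\,\varphi\bigl(x_0+\lambda_i(y-x_i)\bigr) - \frac{1}{\lambda_i}\sum_{j\neq i}\lambda_j w(x_j);
\]
the inequality $\varphi\leq w_{**}$ near $x_0$ combined with the convex-envelope bound $w_{**}\bigl(x_0+\lambda_i(y-x_i)\bigr)\leq \lambda_i w(y)+\sum_{j\neq i}\lambda_j w(x_j)$ yields $\psi_i\leq w$ in a neighborhood of $x_i$, with equality at $x_i$. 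Direct computation gives $\nabla\psi_i(x_i)=p$ and $\nabla^2\psi_i(x_i)=\lambda_i A$ with $A:=\nabla^2\varphi(x_0)$. The super-solution property of $w$ at $x_i$ applied to $\psi_i$ then yields, writing $C(p):=\tfrac{1}{3}|p|^4+(3/4)^3$,
\[
-\lambda_i \langle Ap,p\rangle - \frac{C(p)}{w(x_i)} \geq 0, \qquad i=1,\ldots,k.
\]

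The remaining and crucial step is to combine these $k$ inequalities into the super-solution inequality for $w_{**}$ at $x_0$, namely $-\langle Ap,p\rangle - C(p)/w_{**}(x_0)\geq 0$ with $w_{**}(x_0)=\sum_i\lambda_i w(x_i)$. Weighting the individual inequalities by $\lambda_i$ and summing produces $-\langle Ap,p\rangle\sum_i\lambda_i^2 \geq C(p)\sum_i \lambda_i/w(x_i)$, while Jensen's inequality for the concave function $r\mapsto 1/r$ on $(-\infty,0)$ gives $\sum_i \lambda_i/w(x_i)\leq 1/w_{**}(x_0)$ (both sides negative). The main obstacle will be bridging the gap between these two estimates when $k\geq 2$: the factor $\sum_i\lambda_i^2<1$ and the strict inequality in the Jensen estimate have to be played off against one another. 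I expect this balance to come from the specific algebraic form of \eqref{f:probmod} --- in particular the choice $\alpha=3/4$, which makes the $w$-dependent part of the transformed equation collapse to the single term $1/w$ --- and it may require a more refined (possibly nonlinear) combination of the $k$ super-solution inequalities in place of the naive linear one.
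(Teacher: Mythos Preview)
Your setup is correct up to the point where you obtain the $k$ inequalities $-\lambda_i\langle Ap,p\rangle-C(p)/w(x_i)\geq 0$: the test functions $\psi_i$ do touch $w$ from below at $x_i$ with gradient $p$ and Hessian $\lambda_i A$. But the gap you flag at the end is genuine and is \emph{not} resolved by the special algebraic form of \eqref{f:probmod}. With $q:=\langle Ap,p\rangle>0$ your inequalities say $\lambda_i|w(x_i)|\leq C(p)/q$ for each $i$, and no weighted combination of these can yield $|w_{**}(x_0)|=\sum_i\lambda_i|w(x_i)|\leq C(p)/q$; at best you get a spurious factor $k$. The Jensen inequality for $r\mapsto 1/r$ you invoke points the wrong way to compensate for $\sum_i\lambda_i^2<1$, and the choice $\alpha=3/4$ plays no role at this second-order step.

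What is missing is precisely Proposition~1 of \cite{ALL}, which the paper invokes. It furnishes (in the closure $\overline{J}^{2,-}$) matrices $A_i$ at the $x_i$ --- in general \emph{different} from $\lambda_i A$ --- that are positive semidefinite and satisfy $A-\epsilon A^2\leq\bigl(\sum_i\lambda_i A_i^{-1}\bigr)^{-1}$. Writing the super-solution inequality at $x_i$ as $-w(x_i)\leq C(p)/\langle A_ip,p\rangle$, summing with weights $\lambda_i$, and using the concavity of $Q\mapsto 1/\langle Q^{-1}p,p\rangle$ (also proved in \cite{ALL}), one gets
\[
-\frac{C(p)}{w_{**}(x_0)}\;\geq\;\Bigl(\sum_i\frac{\lambda_i}{\langle A_ip,p\rangle}\Bigr)^{-1}\;\geq\;\Bigl\langle\bigl(\textstyle\sum_i\lambda_i A_i^{-1}\bigr)^{-1}p,\,p\Bigr\rangle\;\geq\;\langle(A-\epsilon A^2)p,p\rangle,
\]
and letting $\epsilon\to 0$ yields the claim. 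Note that your choice $A_i=\lambda_i A$ gives $\bigl(\sum_i\lambda_i A_i^{-1}\bigr)^{-1}=A/k$, so the ALL matrix inequality fails whenever $k\geq 2$ and $A$ is positive definite; this is the structural reason your Hessians are too weak, and why a ``more refined nonlinear combination'' of your inequalities cannot succeed.
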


\begin{proof}
Let $x\in\Omega$  and consider $(p,A)\in\subjet w_{**}(x)$.
Recall that  the second order sub-jet 
$\subjet v (x_0)$ of a function $v\in C (\overline{\Omega})$
at a point $x_0\in\overline{\Omega}$ is by definition the set of pairs
$(p, A) \in \R ^n \times \R ^ { n \times n }_{{\rm sym}}$ such that
\[
v (y) \geq v ( x_0) + \pscal{ p}{y- x_0} 
+ \frac{1}{2} \pscal {A (y- x_0)}{y- x_0} + o ( |y - x_0|^2) 
\qquad \hbox{ as } 
 y \to x_0,\ y\in\overline{\Omega}\,,
\]
whereas its ``closure'' $\csubjet v (x_0)$
is the set of $(p,A)\in \R ^n \times \R ^ { n \times n }_{{\rm sym}}$
for which there is a sequence $(p_j, A_j)\in \subjet v(x_j)$
such that
$(x_j, v(x_j), p_j, A_j) \to (x_0, v(x_0), p, A)$.

For every $\epsilon>0$ small enough,
applying Proposition 1 in \cite{ALL}
and Lemma~\ref{l:emptyjet},
we obtain
points $x_1,\ldots, x_k \in \Omega$,
positive numbers $\lambda_1,\ldots,\lambda_k$ satisfying
$\sum_{i=1}^k \lambda_i = 1$,
and elements
$(p, A_i) \in \csubjet w(x_i)$, with 
$A_i$ positive semidefinite,
such that
\[
\sum_{i=1}^k \lambda_i x_i = x,\quad
\sum_{i=1}^k \lambda_i w(x_i) = w_{**}(x),\quad
A-\epsilon A^2 \leq 
\left(\sum_{i=1}^k \lambda_i A_i^{-1}\right)^{-1}.
\]
We recall that, here and in the sequel,
it is not restrictive to assume that 
the matrices $A$, $A_1,\ldots, A_k$ are positive definite, 
since the case of degenerate matrices can be handled
as in \cite{ALL}, p.~273.

Set for brevity $F(w, p, Q) := - {\rm tr} 
\big (  (p \otimes p) Q  \big )  - \frac{1}{w}  
\Big (\frac{1}{3} |p| ^ 4 + c \Big )$, with $c:= \Big ( \frac{3}{4} \Big ) ^ 3$. 
Since $w$ is a super-solution to \eqref{f:probmod}, we have $F(w(x_i), p, A_i) \geq 0$, i.e.
\[
-w(x_i) \leq \frac{1}{\pscal{A_i p}{p}}\left(\frac{1}{3}|p|^4 + c\right)\, , 
\]
so that
\[
- \frac{1}{\sum_{i=1}^k \lambda_i w(x_i)}
\left(\frac{1}{3}|p|^4 + c\right)
\geq
\left(\sum_{i=1}^k \lambda_i \frac{1}{\pscal{A_i p}{p}}\right)^{-1}.
\]

\smallskip
Then, using the degenerate ellipticity of $F$,  we obtain
\[
\begin{split}
F(w_{**}(x), p, A-\epsilon A^2) & \geq
-\pscal{\left(\sum_{i=1}^k \lambda_i A_i^{-1}\right)^{-1} p}{p}
- \frac{1}{\sum_{i=1}^k \lambda_i w(x_i)}
\left(\frac{1}{3}|p|^4 + c\right)
\\ & \geq
-\pscal{\left(\sum_{i=1}^k \lambda_i A_i^{-1}\right)^{-1} p}{p}
+ \left(\sum_{i=1}^k \lambda_i \frac{1}{\pscal{A_i p}{p}}\right)^{-1}
\\ & \geq 0,
\end{split}
\]
where the last inequality follows from the concavity of the map
$Q\mapsto 1/ \text{tr} \big ( (p \otimes p )  Q^{-1} \big )$ proved in
\cite{ALL}, p. 286.
\end{proof}

\bigskip

Finally,  Theorem \ref{t:34} follows from Proposition \ref{p:All} 
by invoking a comparison principle: 

\bigskip
{\bf Proof of Theorem \ref{t:34}}. Let $u$ be the  solution to problem $(\ref{f:dirich})$, and let  $w= - u ^ {3/4}$.  
Then $w$ is a viscosity solution to \eqref{f:probmod} and, by Proposition \ref{p:All}, $w_{**}$ is a viscosity super-solution to the same problem, which agrees with $w= 0$ on $\partial \Omega$. By the comparison principle holding for problem (\ref{f:probmod}), we infer that $w _{**} \geq w$ in $\Omega$. (Let us point out that the validity of the comparison principle for problem (\ref{f:probmod}) can be readily deduced from the validity of the comparison principle for problem (\ref{f:dirich}) established in \cite[Thm.\ 3]{LuWang}, combined with the observation that the map $u \mapsto w = - u ^ {3/4}$ is a bijection between viscosity sub- or super-solutions to problems (\ref{f:dirich}) and sub- or super-solutions $w$ to problem (\ref{f:probmod})).  On the other hand, by definition, there holds $w _{**} \leq w$ in $\Omega$. We conclude that $w _{**} = w$ in $\Omega$, namely $w$ is convex. \qed

\begin{remark} An interesting question is whether is it possible to extend Theorem \ref{t:34}, and more generally the results of this paper, to the case of the so-called normalized infinity Laplace operator  $\Delta _\infty ^N$ (for its definition, see for instance \cite{LW10}).  
Actually, problem~\eqref{f:dirich}   with $\Delta _\infty ^N$ in place of $\Delta _\infty$ is known to have a unique solution $u$, and the natural conjecture is that $u$ is $(1/2)$-concave, 
since one can readily check that the function $w := - u ^ {1/2}$ solves
$- \Delta _\infty w = \frac{|\nabla w| ^2}{2w} (2|\nabla w| ^ 2 + 1)$.  
However, a careful inspection of the above proof of Theorem~\ref{t:34} reveals that the Alvarez-Lasry-Lions method  does not extend straightforward to this situation, in particular because the bijection exploited in the last part of the proof fails. 
This is one of the reasons why we believe that 
the case of the normalized infinity Laplace operator cannot be handled merely as a parallel variant of the infinity Laplace operator. 
We consider it as a significant direction  to be explored. 
\end{remark}


\section{$C^1$ regularity  of solutions}\label{secdiffe}

In this section we deal with the $C ^1$ regularity of the unique solution to problem \eqref{f:dirich}. 
Our strategy is as follows. 
As a first step, we prove a new estimate for locally semiconcave functions near singular points.  Then we use such estimate as a crucial tool in order
to construct suitable viscosity test functions, which prevent $u$ from being a solution to the pde at singular points. Finally we exploit the local semiconcavity result obtained in the previous section to conclude that $u$ is continuously differentiable. 

Given a function  $u\in C(\Omega)$, we denote by $\Sigma (u)$ the singular set of $u$, namely the set of points where $u$ is not differentiable. 

We recall that the {\it Fr\'echet super-differential} of 
$u$ at a point $x_0\in\Omega$ is defined by
\[
D^+ u (x_0) := 
\left\{ p \in \R ^n \ :\ \limsup _{ x \to x_0} 
\frac{u(x) - u(x_0) -\langle p, x-x_0 \rangle }{|x-x_0| }  
\leq 0  \right\} \,.
\]

We point out that, for every $x_0\in\Sigma(u)$, $D^+ u(x_0)$ is 
nonempty compact convex set which is not
a singleton; in particular, $D^+ u(x_0)\setminus \extr D^+ u(x_0)$
is not empty and contains non-zero elements. 

Then the result reads:

\begin{theorem}\label{t:estid}
Let $u\colon\Omega\to\R$ be a locally semiconcave function,
let $x_0 \in \Sigma(u)$, and let $p\in D^+ u(x_0)\setminus \extr D^+ u(x_0)$. 
Let 
$R>0$ be such that
$\overline{B}_R(x_0)\subset\Omega$, and let $C$ denote
the semiconcavity constant of $u$ on 
$\overline{B}_R(x_0)$.
Then  
there exist
a constant $K>0$ and a unit vector
$\zeta\in\R^n$ 
satisfying the following property:
\begin{equation}\label{f:estidist0}
u(x) \leq u(x_0) + \pscal{p}{x-x_0}
- K\, |\pscal{\zeta}{x-x_0}| +\frac{C}{2}
|x-x_0|^2
\quad \forall x\in\overline{B}_R(x_0)\,.
\end{equation}
In particular, for every $c>0$, setting  $\delta := \min\{K/c, R\}$, 
it holds
\begin{equation}\label{f:estidist}
u(x) \leq u(x_0) + \pscal{p}{x-x_0}
- c \pscal{\zeta}{x-x_0}^2 +\frac{C}{2}
|x-x_0|^2 \quad  \forall x \in  \overline B_{\delta}(x_0)\,.
\end{equation}
Furthermore, if $p\neq 0$ then the vector $\zeta$ can
be chosen so that $\pscal{\zeta}{p}\neq 0$.
\end{theorem}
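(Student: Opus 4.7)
The plan is to combine the non-extremality of $p$ with the standard quadratic upper bound available for elements of the superdifferential of a semiconcave function. First I would recall that for $u$ semiconcave with constant $C$ on $\overline{B}_R(x_0)$, every $q\in D^+u(x_0)$ satisfies
\[
u(x) \leq u(x_0)+\pscal{q}{x-x_0}+\frac{C}{2}|x-x_0|^2 \qquad \forall x\in \overline{B}_R(x_0).
\]
This is standard: the function $\tilde u(x):=u(x)-\tfrac{C}{2}|x|^2$ is concave on $\overline{B}_R(x_0)$, its Fr\'echet superdifferential at $x_0$ is a translate of $D^+u(x_0)$, and the usual concave supergradient inequality applies.

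Since $p$ is non-extremal in $D^+u(x_0)$, one can write $p=\lambda p_1+(1-\lambda)p_2$ with $\lambda\in(0,1)$, $p_1,p_2\in D^+u(x_0)$ and $p_1\neq p_2$. Set
\[
\zeta:=\frac{p_1-p_2}{|p_1-p_2|}, \qquad t_1:=(1-\lambda)|p_1-p_2|, \qquad t_2:=\lambda|p_1-p_2|,
\]
so that $p_1=p+t_1\zeta$ and $p_2=p-t_2\zeta$, with $t_1,t_2>0$. I would then apply the displayed inequality separately at $p_1$ and $p_2$ and take the pointwise minimum of the two resulting upper bounds. Writing $s:=\pscal{\zeta}{x-x_0}$, the minimum of $t_1 s$ and $-t_2 s$ equals $-\min(t_1,t_2)|s|$, and \eqref{f:estidist0} follows with $K:=\min(t_1,t_2)>0$.

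Inequality \eqref{f:estidist} is then a direct consequence of \eqref{f:estidist0}: the elementary bound $c s^2\leq K|s|$ holds whenever $|s|\leq K/c$, and since $|s|\leq |x-x_0|$ this is automatic on $\overline{B}_\delta(x_0)$ with $\delta:=\min(K/c,R)$. Substitution in \eqref{f:estidist0} yields the required quadratic estimate.

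The most delicate point is the final assertion that, when $p\neq 0$, the vector $\zeta$ can be chosen with $\pscal{\zeta}{p}\neq 0$. The construction above is flexible: any unit vector in the linear span of $D^+u(x_0)-p$ can serve as $\zeta$ (pick $p_1,p_2$ symmetric around $p$ in that direction). If this span is not contained in the hyperplane $p^\perp$, we immediately pick a suitable $\zeta$. I expect this to be the main obstacle in the proof, since one must rule out the degenerate situation in which every admissible direction happens to be orthogonal to $p$; handling it will likely require exploiting the additional freedom in the convex decomposition of $p$ (or a perturbation combining the basic estimate with a further refinement along $p/|p|$), while checking that the positivity of the constants $K$ and $\delta$ is preserved.
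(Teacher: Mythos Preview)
Your argument for \eqref{f:estidist0} and \eqref{f:estidist} is correct and follows the same line as the paper: apply the quadratic supergradient bound at several points of $D^+u(x_0)$ and take the pointwise minimum. One small slip: the minimum of $t_1 s$ and $-t_2 s$ is not \emph{equal} to $-\min(t_1,t_2)|s|$ in general (it is $-t_2 s$ for $s>0$ and $t_1 s$ for $s<0$), but it is always $\leq -\min(t_1,t_2)|s|$, which is all you need.

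The paper differs from your two–point argument in that it writes $p=\sum_{i=0}^k\lambda_i p_i$ with $p_0,\dots,p_k\in\extr D^+u(x_0)$ and then takes the minimum over all $k+1$ linear bounds. The payoff is a larger set of admissible directions: defining $K$ as the distance from $0$ to the relative boundary of $\conv\{p_0-p,\dots,p_k-p\}$, the inequality \eqref{f:estidist0} holds for \emph{every} unit vector $\zeta$ in
\[
Z=\Big\{\tfrac{z}{|z|}:\ z\in\conv\{p_0-p,\dots,p_k-p\},\ z\neq 0\Big\},
\]
which is the full unit sphere of the subspace $F=\mathrm{span}\{p_i-p\}$. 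Your two–point construction only yields directions along a single line through $p$ inside $D^+u(x_0)$; your claim that ``any unit vector in the linear span of $D^+u(x_0)-p$ can serve'' by choosing $p_1,p_2$ symmetric about $p$ is only valid when $p$ lies in the \emph{relative interior} of $D^+u(x_0)$, not for an arbitrary non-extremal $p$.

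You are right that the final assertion (choosing $\zeta$ with $\pscal{\zeta}{p}\neq 0$ when $p\neq 0$) is the delicate point, and the paper does not argue it any more explicitly than you do: it simply records the freedom $\zeta\in Z$. In fact the degenerate situation you worry about can genuinely occur: if $u(x)=-|x_1|+x_2$ (concave, hence semiconcave with $C=0$) and $p=(0,1)$, then $D^+u(0)$ is the segment $[(-1,1),(1,1)]$, the subspace $F$ is $\R e_1$, and \eqref{f:estidist0} with any $K>0$ forces $\zeta=\pm e_1\perp p$. So neither your perturbation idea nor the paper's set $Z$ resolves this case; the statement, as written, needs either an additional hypothesis on $p$ (for instance $p$ in the relative interior of $D^+u(x_0)$, which suffices for the application to Theorem~\ref{t:diff}) or a more refined argument.
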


\begin{remark}
By inspection of the proof below, one can derive some additional information on the constant $K$ and the vector $\zeta$
 appearing in \eqref{f:estidist0}. In fact,  let
 $p_0, \ldots, p_k $
be  points  in $\extr D^+ u(x_0)$ such that 
$p \in \conv\{p_0,\ldots,p_k\}$  ($1 \leq k \leq n$).  Then  the constant
 $K$  can be chosen as as the distance
between the origin and the boundary of the set
$\conv\{p_0-p,\ldots,p_k-p\}$, whereas 
the vector $\zeta$ 
can be chosen in the set
\begin{equation}\label{f:zeta}
Z:= \left\{\frac{z}{|z|}\, :\
z\in\conv\{p_0-p,\ldots,p_k-p\},\ z\neq 0
\right\}\,.
\end{equation}
\end{remark}

\bigskip
\textbf{Proof of Theorem \ref{t:estid}.}
Since $p \in D^+ u(x_0)\setminus \extr D^+ u(x_0)$
there exist $p_0, \ldots, p_k \in  \extr D^+ u(x_0)$
(with $1\leq k\leq n$)
and numbers 
$\lambda_0, \ldots, \lambda_k\in (0,1)$ with
$\sum _{i=0}^k \lambda_i = 1$ such that
$p = \sum_{i=0}^k \lambda_i p_i$.

We divide the remaining of the proof in three steps.

\medskip
\textsl{Step 1. 
The following inequality holds:}
\begin{equation}\label{f:estid12}
u(x) \leq u(x_0) + \min_{i=0,\ldots,k} \pscal{p_i}{x-x_0}
+\frac{C}{2} |x-x_0|^2\,,
\qquad \forall x\in\overline{B}_R(x_0).
\end{equation}

\smallskip
For every $i=0,\ldots,k$,
since $p_i\in D^+ u(x_0)$ 
by \cite[Prop.~3.3.1]{CaSi} we have that
\[
u(x) \leq u(x_0) + \pscal{p_i}{x-x_0} + \frac{C}{2}\, |x-x_0|^2
\qquad \forall x\in\overline{B}_R(x_0),
\]
so that \eqref{f:estid12} easily follows.

\medskip
\textsl{Step 2. 
Let $K$ denote the distance between the origin and the boundary
of $\conv\{p_0-p,\ldots,p_k-p\}$.
Then for every unit vector $\zeta$ in the set $Z$
defined in \eqref{f:zeta}, one has
\begin{equation}\label{f:estimin22}
\min_{i=0,\ldots,k} \pscal{p_i-p}{x}\leq
-K\, |\pscal{\zeta}{x}|\,,
\qquad \forall x\in\R^n.
\end{equation}
}

\smallskip
Since $\sum_{i} \lambda_i (p_i - p) = 0$
we have that the set
\[
F := {\rm span}\{p_0-p, p_1-p, \ldots, p_k-p\}
\]
is a subspace of $\R^n$ of dimension $k$.
Let
\[
Q := \conv\{p-p_0, p-p_1,\ldots, p-p_k\}\,;
\]
since $0$ belongs to the relative interior of the polytope $Q$,
and since $K$ is the distance between $0$ and
the boundary of $Q$, we clearly have $K > 0$
and $B := \overline{B}_K(0)\cap F \subseteq Q$.
Hence
\[
h_Q(x) := \max\{\pscal{q}{x}:\ q\in Q\}
\geq
\max\{\pscal{b}{x}:\ b\in B\} =: h_B(x),
\qquad \forall x\in\R^n.
\]
On the other hand, we have that
\[
h_Q(x) = \max_{i=0,\ldots k} \pscal{p-p_i}{x}
= - \min_{i=0,\ldots k} \pscal{p_i-p}{x}
\]
whereas, if $\zeta$ is any unit vector in the set $Z$ defined in \eqref{f:zeta},
then $\pm K\zeta\in B$, so that
\[
h_B(x) = \max\{\pscal{b}{x}:\ b\in B\}
\geq K |\pscal{\zeta}{x}|\,.
\]
Now \eqref{f:estimin22} 
easily follows.

\medskip
\textsl{Step 3. Completion of the proof.}

\smallskip
The estimate \eqref{f:estidist0} is a direct consequence
of \eqref{f:estid12} and \eqref{f:estimin22}.
In order to prove \eqref{f:estidist}
it is enough to observe that,
given $c > 0$,
the inequality
$K \, |t| \geq c\, t^2$
holds for every  $|t| < K/c$.
\qed

\bigskip
By exploiting the above geometric result for semiconcave functions, we obtain:

\begin{theorem}\label{t:diff}
Let $u\in C(\Omega)$ be a viscosity solution to
$-\Delta_{\infty}u = f(x,u)$ in $\Omega$.
If $u$ is locally semiconcave in $\Omega$,
then $u$ is everywhere differentiable (hence of class $C^1$)
in $\Omega$.
\end{theorem}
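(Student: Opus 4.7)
The plan is to argue by contradiction, using Theorem \ref{t:estid} to manufacture a $C^2$ test function that violates the viscosity sub-solution inequality at any hypothetical singular point. Suppose $x_0 \in \Sigma(u)$; then $D^+u(x_0)$ is a compact convex set containing more than one point. I first claim one can select
\[
p \in D^+u(x_0) \setminus \extr D^+u(x_0), \qquad p \neq 0.
\]
Indeed, if $0 \notin D^+u(x_0)$ any non-extreme element does the job, whereas if $0 \in D^+u(x_0)$ one picks any $q \in D^+u(x_0) \setminus \{0\}$ (which exists since $D^+u(x_0) \neq \{0\}$) and takes $p := q/2$: this is nonzero and, being a nontrivial convex combination of $0$ and $q$, is not extreme.

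Next, applying Theorem \ref{t:estid} at $x_0$ with this $p$ and invoking the final clause that $\zeta$ can be chosen with $\langle \zeta, p \rangle \neq 0$, for every $c > 0$ one obtains a radius $\delta_c > 0$ such that
\[
u(x) \leq u(x_0) + \langle p, x-x_0 \rangle - c\, \langle \zeta, x-x_0 \rangle^2 + \frac{C}{2}|x-x_0|^2 \qquad \forall\, x \in \overline{B}_{\delta_c}(x_0).
\]
Define $\varphi(x)$ to be the right-hand side; then $\varphi \in C^2$, $u(x_0) = \varphi(x_0)$, and $u - \varphi$ has a local maximum at $x_0$, so the viscosity sub-solution property of $u$ demands
\[
-\Delta_\infty \varphi(x_0) \leq f(x_0, u(x_0)).
\]
A direct computation gives $\nabla \varphi(x_0) = p$ and $\nabla^2 \varphi(x_0) = -2c\, \zeta \otimes \zeta + C I$, hence
\[
-\Delta_\infty \varphi(x_0) \;=\; -\langle \nabla^2 \varphi(x_0)\, p, p \rangle \;=\; 2c\, \langle \zeta, p \rangle^2 - C|p|^2.
\]
Since $\langle \zeta, p \rangle \neq 0$, the right-hand side tends to $+\infty$ as $c \to +\infty$, so for $c$ large enough it exceeds $f(x_0, u(x_0))$, yielding the desired contradiction.

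Therefore $\Sigma(u) = \emptyset$, that is, $u$ is everywhere differentiable in $\Omega$. To upgrade pointwise differentiability to $C^1$ regularity, one uses the standard fact for locally semiconcave functions that the set-valued super-differential $x \mapsto D^+u(x)$ is upper semicontinuous with nonempty compact convex values; once these values are all singletons, upper semicontinuity collapses to continuity of $x \mapsto \nabla u(x)$. The delicate point in the whole argument is precisely the selection of a \emph{nonzero} $p \in D^+u(x_0) \setminus \extr D^+u(x_0)$: were $p = 0$ forced, the dominant term $2c\, \langle \zeta, p \rangle^2$ in the computation above would vanish and no choice of $c$ could produce a contradiction. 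The elementary convexity observation above precisely rules this obstruction out.
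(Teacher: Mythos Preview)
Your proof is correct and follows essentially the same approach as the paper's: both argue by contradiction, select a nonzero non-extreme $p\in D^+u(x_0)$, invoke Theorem~\ref{t:estid} with the clause $\langle\zeta,p\rangle\neq 0$, and compute $-\Delta_\infty\varphi(x_0)=2c\langle\zeta,p\rangle^2-C|p|^2$ to violate the subsolution inequality for large $c$. You add helpful detail the paper leaves implicit---namely the explicit justification that a nonzero non-extreme $p$ exists and the upper-semicontinuity argument for the $C^1$ upgrade (the paper simply cites \cite[Prop.~3.3.4]{CaSi} for the latter).
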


\begin{proof}
Assume by contradiction that $\Sigma(u)\neq \emptyset$.
Without loss of generality we can assume that $0\in \Sigma(u)$.
Let $p\in D^+u(0)\setminus \extr D^+u(0)$, $p\neq 0$.
By Theorem~\ref{t:estid}, there exists a unit vector 
$\zeta\in\R^n$ such that $\pscal{\zeta}{p}\neq 0$ and, for every $c>0$,
\[
u(x) \leq \varphi(x) :=
u(0) + \pscal{p}{x} - c \pscal{\zeta}{x}^2 +
\frac{C}{2}\, |x|^2,\qquad
\forall x\in B_{\delta}(0),
\]
with $\delta$ depending on $c$.
Since
\[
\Delta_{\infty} \varphi(0) =
-2 c  \pscal{\zeta}{p}^2 + C|p|^2,
\]
choosing $c>0$ large enough we get $-\Delta_{\infty} \varphi(0) > f(0, u(0))$,
a contradiction.

Since $u$ is differentiable everywhere in $\Omega$,
then by \cite[Prop.~3.3.4]{CaSi}
we conclude that $u\in C^1(\Omega)$.
\end{proof}

\bigskip
Finally, by combining Theorem \ref{t:34} and Corollary \ref{c:locsemiconc} with Theorem \ref{t:diff}, we obtain:

\begin{corollary}\label{c:contdiff}
Assume $\ipo$, and let $u$ be the solution to problem $(\ref{f:dirich})$. 
Then $u$ is continuously differentiable in $\Omega$.
\end{corollary}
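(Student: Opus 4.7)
The plan is to combine the three main results from the preceding two sections, which together form a direct pipeline from the power-concavity of the solution to its $C^1$ regularity, so essentially no new work is required at this stage.

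First I would invoke Corollary~\ref{c:locsemiconc}, which under the standing assumption $\ipo$ guarantees that the solution $u$ to problem~\eqref{f:dirich} is locally semiconcave in $\Omega$. That corollary is itself an immediate consequence of Theorem~\ref{t:34}: once $u^{3/4}$ is known to be concave, composing with the auxiliary function $\psi(t) = t^{4/3} - 2\epsilon^{-1/2}t^2/9$, which is concave on $[\epsilon^{3/4}, +\infty)$, yields a quantitative semiconcavity estimate for $u$ on each superlevel set $\{u \ge \epsilon\}$, and these sets exhaust the compact subsets of $\Omega$ by Remark~\ref{r:pos}.

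Second, I would apply Theorem~\ref{t:diff} with the source term $f(x,u) \equiv 1$, which obviously fits the framework of that theorem. Since $u$ is a viscosity solution to $-\Delta_\infty u = 1$ in $\Omega$ and has just been shown to be locally semiconcave, Theorem~\ref{t:diff} gives that $u$ is everywhere differentiable in $\Omega$ and, by the quoted result \cite[Prop.~3.3.4]{CaSi} applied in the proof of that theorem, the gradient is continuous, so $u \in C^1(\Omega)$.

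There is no genuine obstacle here: the real content is in Theorem~\ref{t:34} (power-concavity via the Alvarez--Lasry--Lions convex envelope method) and Theorem~\ref{t:diff} (the contradiction argument at singular points based on the semiconcavity estimate of Theorem~\ref{t:estid}). The only thing to be mindful of when writing the proof is to make explicit that $f(x,u) = 1$ trivially satisfies the hypotheses of Theorem~\ref{t:diff}, and to cite Corollary~\ref{c:locsemiconc} rather than Theorem~\ref{t:34} directly, since the semiconcavity (not the $3/4$-concavity per se) is what Theorem~\ref{t:diff} consumes.
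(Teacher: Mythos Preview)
Your proposal is correct and follows exactly the paper's approach: combine the local semiconcavity of $u$ (Corollary~\ref{c:locsemiconc}, from Theorem~\ref{t:34}) with Theorem~\ref{t:diff} applied with $f\equiv 1$ to conclude $u\in C^1(\Omega)$. If anything, your citation of Corollary~\ref{c:locsemiconc} is slightly more precise than the paper's own proof, which refers directly to Theorem~\ref{t:34} for the semiconcavity.
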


\begin{proof}
By Theorem~\ref{t:34} we know that $u$ is locally semiconcave in $\Omega$,
hence  from Theorem~\ref{t:diff} we obtain that $u \in C ^ 1 (\Omega)$. 
\end{proof}

\section{The $P$-function along the gradient flow}\label{secP}

In this section we investigate the behavior of the $P$-function associated with problem 
(\ref{f:dirich}) according to the following 

\begin{definition}\label{d:P} Let $u$ be the solution to problem (\ref{f:dirich}). We set
\begin{equation}\label{defP}
P (x) := \frac{|\nabla u(x)| ^ 4}{4} + u(x) \, ,\qquad x \in{\Omega}\,.
\end{equation}
\end{definition}

\medskip
The relevance of this $P$-function in connection with problems \eqref{f:dirich} and \eqref{f:serrin} is enlightened by the next two lemmas.  
In fact, such relevance is two-fold. On one hand,  if it happens that  $P$ is constant on the whole $\Omega$, this gives geometric information on $\Omega$ (see Proposition \ref{p:P1} and Corollary \ref{corgeo}); we shall exploit this fact in Section \ref{secgeo} to study the overdedetemined boundary value problem \eqref{f:serrin}. On the other hand, if it happens that  $P$ is constant along a steepest ascent line of $u$, i.e. along a trajectory of the Cauchy problem 
\begin{equation}
\label{f:geo}
\begin{cases}
\dot \gamma (t) = \nabla u (\gamma (t)) 
\\
\gamma (0) = x \in \overline \Omega\,,& 
\end{cases}
\end{equation} then it is possible to compute explicitly  $u$ along the trajectory (see Proposition \ref{p:P2}); we shall exploit this fact in Section \ref{secreg} in order to obtain regularity thresholds for the solution to the Dirichlet problem \eqref{f:dirich}. 

We recall that the cut locus $\Cut (\Omega)$ and the high ridge $\high (\Omega)$ of $\Omega$ are the set defined as in (\ref{cut})-(\ref{high}); moreover, $\rho _\Omega$ and $\phi _\Omega$ denote respectively the inradius of $\Omega$ and the web function introduced in (\ref{defphi}).

\begin{proposition}
\label{p:P1}
Assume that the unique solution $u$ to the Dirichlet problem~\eqref{f:dirich}
is of class $C^1(\Omega)$, and
that the $P$-function introduced in Definition~\ref{d:P} satisfies
\begin{equation}
\label{f:pconst}
P (x) = \lambda \qquad \hbox{ for $\mathcal L ^n$-a.e. } x \in \Omega\,,
\end{equation}
for some $\lambda\leq c_0 \rho_\Omega^{4/3}$.
Then $\lambda = c_0 \rho_\Omega^{4/3}$ and
$u=\phi _\Omega$, where $\phi_\Omega$ is the function
defined in~\eqref{defphi}, 
and it holds $\Cut (\Omega) = \high (\Omega)$. 
\end{proposition}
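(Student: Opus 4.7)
The plan is first to upgrade the $\mathcal{L}^n$-a.e.\ identity $P\equiv\lambda$ to a pointwise one: since $u\in C^1(\Omega)$, the function $P$ is continuous on $\Omega$, so $P(x)=\lambda$ for every $x\in\Omega$, yielding the eikonal-type relation $|\nabla u|^4=4(\lambda-u)$ in $\Omega$ and in particular $u\le\lambda$. Because $u>0$ in $\Omega$ by Remark~\ref{r:pos}, $u$ attains its maximum at some interior point $x^\ast$ where $\nabla u(x^\ast)=0$; this forces $\lambda=P(x^\ast)=u(x^\ast)=\max_{\overline{\Omega}}u$. To compare $\lambda$ with $c_0\rho_\Omega^{4/3}$ from below, I would fix any inscribed ball $B=B_{\rho_\Omega}(y_0)\subset\Omega$ and note that its Dirichlet problem is explicitly solved by the web function $\phi_B(x)=c_0[\rho_\Omega^{4/3}-|x-y_0|^{4/3}]$, whose maximum is $c_0\rho_\Omega^{4/3}$ at $y_0$. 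The comparison principle of \cite[Thm.~3]{LuWang}, applied on $B$ with $u\ge 0=\phi_B$ on $\partial B$, gives $u(y_0)\ge c_0\rho_\Omega^{4/3}$, so $\lambda\ge c_0\rho_\Omega^{4/3}$; combined with the hypothesis this forces $\lambda=c_0\rho_\Omega^{4/3}$.

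The core of the proof is to show $u=\phi_\Omega$ via two opposite inequalities. For the upper bound, given $x\in\Omega$ I pick a nearest boundary point $y$; by convexity of $\Omega$, the segment $\sigma(s):=y+s(x-y)/|x-y|$ lies in $\Omega$ for $s\in[0,\dist_{\partial\Omega}(x)]$. Setting $h(s):=u(\sigma(s))$, I have $h(0)=0$ and, by Cauchy--Schwarz together with $|\nabla u|=(4(\lambda-u))^{1/4}$, the inequality $h'(s)\le(4(\lambda-h(s)))^{1/4}$. The solution $H$ of $H'=(4(\lambda-H))^{1/4}$, $H(0)=0$, is obtained by separation of variables and equals, thanks to $\lambda=c_0\rho_\Omega^{4/3}$, exactly $H(s)=c_0[\rho_\Omega^{4/3}-(\rho_\Omega-s)^{4/3}]$; ODE comparison then yields $u(x)=h(\dist_{\partial\Omega}(x))\le H(\dist_{\partial\Omega}(x))=\phi_\Omega(x)$.

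For the matching lower bound I would use the backward gradient flow: Peano's theorem gives a $C^1$ solution $\gamma$ of $\dot\gamma=-\nabla u(\gamma)$, $\gamma(0)=x$, and the identity $|\nabla u|^2=2\sqrt{\lambda-u}$ turns $(u\circ\gamma)'(t)=-|\nabla u(\gamma(t))|^2$ into a separable ODE that integrates to $\sqrt{\lambda-u(\gamma(t))}=\sqrt{\lambda-u(x)}+t$. Hence $\gamma$ remains in $\{u>0\}=\Omega$ up to time $T:=\sqrt\lambda-\sqrt{\lambda-u(x)}$, at which $u(\gamma(T))=0$ and so $\gamma(T)\in\partial\Omega$ by continuity. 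Integrating $|\dot\gamma(t)|=\sqrt{2}(\sqrt{\lambda-u(x)}+t)^{1/2}$ from $0$ to $T$ gives a total arc length of $\tfrac{2\sqrt{2}}{3}[\lambda^{3/4}-(\lambda-u(x))^{3/4}]=c_0^{-3/4}[\lambda^{3/4}-(\lambda-u(x))^{3/4}]$. Since a straight segment is shorter, $\dist_{\partial\Omega}(x)\le|x-\gamma(T)|\le$ arc length; using $\lambda^{3/4}=c_0^{3/4}\rho_\Omega$ and rearranging gives $u(x)\ge c_0[\rho_\Omega^{4/3}-(\rho_\Omega-\dist_{\partial\Omega}(x))^{4/3}]=\phi_\Omega(x)$.

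Finally, with $u=\phi_\Omega\in C^1(\Omega)$ in hand, the equality $\Cut(\Omega)=\high(\Omega)$ follows from two inclusions. The inclusion $\high\subseteq\Cut$ is standard: at any $x_0\in\high$, the $1$-Lipschitz bound $\dist_{\partial\Omega}(x_0+tv)\le|x_0+tv-y_1|=\rho_\Omega+tv\cdot n_1+O(t^2)$ (for any nearest boundary point $y_1$ and corresponding unit inward normal $n_1$) forces $\dist_{\partial\Omega}$ to decrease linearly in any direction with $v\cdot n_1<0$, contradicting differentiability at an interior maximum. For the converse, write $\phi_\Omega=\Phi\circ\dist_{\partial\Omega}$ with $\Phi(t):=c_0[\rho_\Omega^{4/3}-(\rho_\Omega-t)^{4/3}]$: since $\Phi'(t)>0$ for $t<\rho_\Omega$, at any $x_0\in\Sigma(\Omega)$ with $\dist_{\partial\Omega}(x_0)<\rho_\Omega$ a Taylor expansion of $\Phi$ combined with the $1$-Lipschitz bound $|\Delta d|\leq t|v|$ forces $\phi_\Omega$ to inherit the non-differentiability of $\dist_{\partial\Omega}$, contradicting $\phi_\Omega\in C^1(\Omega)$; hence $\Sigma(\Omega)\subseteq\high(\Omega)$ and, taking closures, $\Cut(\Omega)\subseteq\high(\Omega)$. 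The main technical subtlety I foresee is the backward-flow step: since $\nabla u$ is merely continuous, uniqueness of solutions of $\dot\gamma=-\nabla u(\gamma)$ may fail, but this is harmless because the identity $|\nabla u|^2=2\sqrt{\lambda-u}$ pins down the profile of $u\circ\gamma$, and hence the arc-length estimate, independently of which Peano solution we select.
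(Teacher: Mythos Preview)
Your argument is correct and takes a genuinely different route from the paper. The paper recasts the identity $P\equiv\lambda$ as the Hamilton--Jacobi equation $H(u,\nabla u)=0$ with $H(v,p)=\tfrac14|p|^4+v-\lambda$, observes that $u$ is a $C^1$ (hence viscosity) solution, verifies by a direct sub/supersolution check that $\phi_\Omega$ is also a viscosity solution of the same Dirichlet problem, and concludes $u=\phi_\Omega$ by invoking the uniqueness theorem of Barles for first--order equations. You instead prove the two inequalities $u\leq\phi_\Omega$ and $u\geq\phi_\Omega$ directly, by scalar ODE comparison along the ray to a nearest boundary point and by an arc--length estimate along a backward gradient trajectory, respectively. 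Your approach is more self--contained (it avoids the viscosity uniqueness machinery and the verification that $\phi_\Omega$ is a viscosity solution at singular points of $d_{\partial\Omega}$), while the paper's approach is shorter once that machinery is granted. You also spell out the implication $u=\phi_\Omega\in C^1(\Omega)\Rightarrow\Cut(\Omega)=\high(\Omega)$, which the paper leaves implicit.

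One minor correction: you invoke ``convexity of $\Omega$'' to place the segment $[y,x]$ inside $\Omega$, but convexity is \emph{not} among the hypotheses of Proposition~\ref{p:P1}. Fortunately your argument does not need it: if $y\in\partial\Omega$ is a nearest boundary point to $x$, then the open ball $B_{d_{\partial\Omega}(x)}(x)$ lies in $\Omega$ and contains the open segment $(y,x]$, so $\sigma(s)\in\Omega$ for $s\in(0,d_{\partial\Omega}(x)]$ regardless of the shape of $\Omega$. Note also that the ODE comparison $h\leq H$ works because the right--hand side $y\mapsto(4(\lambda-y))^{1/4}$ is monotone decreasing (Lipschitz continuity near $y=\lambda$ is not required), and that in the backward--flow step the global bound $|\nabla u|^4=4(\lambda-u)\leq 4\lambda$ guarantees that any Peano solution of $\dot\gamma=-\nabla u(\gamma)$ can be continued as long as $u(\gamma(t))>0$, i.e.\ up to the time $T$ you compute.
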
 

\begin{proof}
It is clear that $\lambda = \max u$.
On the other hand, $\max u \geq \max v = c_0 \rho_\Omega^{4/3}$,
where $v$ is the radial solution of the Dirichlet problem
in a ball $B_{\rho_\Omega}\subseteq\Omega$.
Hence $\lambda = c_0 \rho_\Omega^{4/3}$.

Let $H\colon\R\times\R^n\to\R$ be the Hamiltonian defined by
\[
H(u, p) := \frac{1}{4} |p|^4 + u - \lambda.
\]
Then the equality~\eqref{f:pconst} can be rewritten as
\begin{equation}
\label{star}
H(u(x), \nabla u(x)) = 0,  \qquad \mathcal L ^n \hbox{-a.e. on } \Omega\ .
\end{equation}
Since $u$ is of class $C^1(\Omega)$, then it follows that it is a classical
(hence also a viscosity) solution of the Dirichlet problem
\begin{equation}
\label{f:dpH}
\begin{cases}
H(u, \nabla u) = 0, & \text{in}\ \Omega,\\
u = 0, & \text{on}\ \partial\Omega.
\end{cases}
\end{equation}
Since the solution to this Dirichlet problem is unique
(see {\it e.g.}\ \cite[Theorem III.1]{B}),
to prove that $u=\phi_\Omega$ it is enough to show that also
$\phi_\Omega$ is a viscosity solution to~\eqref{f:dpH}.

It is readily seen that $\phi_\Omega$ is differentiable at every point
$x\in\Omega\setminus S$, where $S := \Sigma(\Omega) \setminus M(\Omega)$,
and $H(\phi_\Omega(x), \nabla\phi_\Omega(x)) =0$.

Hence it is enough to show that, for every $x\in S$,
one has
\begin{equation}
\label{f:ssd}
H(\phi_\Omega(x), p) \leq 0, \quad \forall p\in D^+ \phi_\Omega(x), \qquad
H(\phi_\Omega(x), q) \geq 0, \quad \forall q\in D^- \phi_\Omega(x),
\end{equation}
where the symbols $D^+ \psi$ and $D^-\psi$ denote respectively the super and sub-differential
of a function $\psi$.
Since $x\in S$, then $x\not\in M(\Omega)$, so that $\delta := \dist_{\partial\Omega} (x) < \rho_\Omega$.
As a consequence
\[
D^{\pm} \phi_\Omega(x) = g'(\delta)\, D^{\pm} \dist_{\partial\Omega}(x),
\]
where $g(t) := c_0 [\rho_\Omega^{4/3} - (\rho_\Omega - t)^{4/3}]$ and
$g'(\delta) > 0$.
In particular $D^- \phi_\Omega(x) = \emptyset$, since the sub-differential of the
distance function is empty at singular points
(see \cite[Corollary~3.4.5]{CaSi}),
so that the second condition in~\eqref{f:ssd} is trivially satisfied.

Let now consider $p\in D^+\phi_\Omega(x)$.
Since $D^+\dist_{\partial\Omega}(x)$ is contained in the closed unit ball,
there exists $\xi\in\R^n$, $|\xi|\leq 1$, such that
$p = g'(\delta)\, \xi$, hence
\begin{equation}
\label{f:last}
H(\phi_\Omega(x), p) = \frac{1}{4}\, g'(\delta)^4\, |\xi|^4 + \phi_\Omega(x) - \lambda
\leq \frac{1}{4}\, g'(\delta)^4 + g(\delta) - \lambda.
\end{equation}
On the other hand, if $(x_k)\subset \Omega\setminus\Sigma(\Omega)$ is a sequence
converging to $x$, we get
\[
0 = H(\phi_\Omega(x_k), \nabla\phi_\Omega(x_k))
= \frac{1}{4} g'(\dist_{\partial\Omega}(x_k))^4 + g(\dist_{\partial\Omega}(x_k)) - \lambda
\longrightarrow \frac{1}{4}\, g'(\delta)^4 + g(\delta) - \lambda,
\]
that, together with~\eqref{f:last},
proves the first condition in~\eqref{f:ssd}.
\end{proof}

\begin{proposition}\label{p:P2} Let $u$ be the solution to problem \eqref{f:dirich}, and let
 $\gamma :[0, \delta ) \to \Omega$ be a local solution to problem \eqref{f:geo}, starting at a point $x$ with $\nabla u (x) \neq 0$.  
 Assume that $u$ is differentiable at $\gamma (t)$ for $\mathcal L ^1$-a.e.\ $t \in [0, \delta)$, and 
that the $P$-function introduced in Definition \ref{d:P} satisfies
$$P (\gamma (t)) = \lambda \qquad \hbox{ for $\mathcal L ^1$-a.e.\ $t \in [0, \delta)$ }\,.$$
Then, setting $m := u (x)$, it holds $\lambda >m$ and, for all $t \in [0, \delta)$,  the function $\varphi (t):=u (\gamma (t))$ agrees with the function
\[
\overline{\varphi}(t) :=
\begin{cases}
\lambda - (\sqrt{\lambda-m} - t)^2,
&\text{if}\ t\in [0, \sqrt{\lambda-m})\\
\lambda,
& \text{if}\ t\geq \sqrt{\lambda-m}\,.
\end{cases}
\] 
\end{proposition}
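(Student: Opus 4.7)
The strategy is to reduce the claim to an explicit one-dimensional ODE for $\varphi(t) := u(\gamma(t))$ and then integrate it.
As a first step I would observe that $\varphi$ is locally Lipschitz on $[0,\delta)$: $u$ is locally Lipschitz in $\Omega$ (e.g.\ by Corollary~\ref{c:locsemiconc} or Corollary~\ref{c:contdiff}), hence $\gamma$, being an absolutely continuous solution of \eqref{f:geo}, is itself locally Lipschitz; composing, $\varphi$ is locally Lipschitz and therefore absolutely continuous on every closed subinterval of $[0,\delta)$. At every $t$ at which $u$ is differentiable at $\gamma(t)$ and $\dot\gamma(t)=\nabla u(\gamma(t))$ (a set of full measure in $[0,\delta)$ by the hypothesis and \eqref{f:geo}), the chain rule gives $\varphi'(t)=\nabla u(\gamma(t))\cdot\dot\gamma(t)=|\nabla u(\gamma(t))|^{2}$.

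Next, I would exploit the hypothesis $P(\gamma(t))=\lambda$: at a.e.\ such $t$,
\begin{equation*}
|\nabla u(\gamma(t))|^{4} \;=\; 4\,(\lambda-\varphi(t)).
\end{equation*}
This forces $\varphi\leq\lambda$ a.e., hence everywhere by continuity of $\varphi$. Letting $t\to 0^{+}$ along a sequence of good points, the continuity of $\varphi$ together with the existence of $\nabla u(x)$ yields $|\nabla u(x)|^{4}=4(\lambda-m)$, and the assumption $\nabla u(x)\neq 0$ then gives $\lambda>m$. Combining the two identities, $\varphi$ satisfies the Cauchy problem
\begin{equation*}
\varphi'(t) \;=\; 2\sqrt{\lambda-\varphi(t)}\quad\text{for a.e.\ } t\in[0,\delta),\qquad \varphi(0)=m.
\end{equation*}

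Finally, I would integrate this ODE via the substitution $\psi:=\sqrt{\lambda-\varphi}$, which is strictly positive on the open set $U:=\{t\in[0,\delta):\varphi(t)<\lambda\}$. On every closed subinterval of $U$ the function $\sqrt{\cdot}$ is Lipschitz on the relevant range, so $\psi$ is absolutely continuous there and $\psi'(t)=-\varphi'(t)/(2\sqrt{\lambda-\varphi(t)})=-1$ a.e. Since $\psi(0)=\sqrt{\lambda-m}$, integration yields $\psi(t)=\sqrt{\lambda-m}-t$, and therefore
\begin{equation*}
\varphi(t) \;=\; \lambda-\bigl(\sqrt{\lambda-m}-t\bigr)^{2}\quad\text{on } [0,T),\qquad T:=\sqrt{\lambda-m};
\end{equation*}
in particular $\varphi(T)=\lambda$ by continuity. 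For $t\in[T,\delta)$, the relation $\varphi'(t)=|\nabla u(\gamma(t))|^{2}\geq 0$ a.e.\ makes $\varphi$ nondecreasing, while $\varphi\leq\lambda$ throughout; hence $\varphi\equiv\lambda$ on $[T,\delta)$, which matches $\overline{\varphi}$ and concludes the proof.

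I do not anticipate a serious obstacle. The only subtle point is the chain-rule step for $\psi=\sqrt{\lambda-\varphi}$, which is legitimate because we perform it only on subintervals of $U$, where $\lambda-\varphi$ is bounded away from $0$ and $\sqrt{\cdot}$ is Lipschitz; equivalently one could verify directly that $\overline{\varphi}$ solves the same Cauchy problem and invoke uniqueness of the absolutely continuous solution under the constraint $\varphi\leq\lambda$.
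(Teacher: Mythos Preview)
Your approach is essentially the paper's: both reduce to the absolutely continuous Cauchy problem $\varphi'(t)=2\sqrt{\lambda-\varphi(t)}$, $\varphi(0)=m$, and identify $\varphi$ with $\overline\varphi$; the only cosmetic difference is that you integrate explicitly via $\psi=\sqrt{\lambda-\varphi}$ while the paper simply asserts uniqueness of the solution.

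Two small repairs are worth making. First, for the local Lipschitz continuity of $u$ you cite Corollaries~\ref{c:locsemiconc}/\ref{c:contdiff}, but those require $\ipo$, which Proposition~\ref{p:P2} does not assume; the paper instead uses the general fact $u\in\mathrm{Lip}(\Omega)$ (cf.\ \cite[Lemma~2.9]{ArCrJu}). Second, your derivation of $\lambda>m$ via ``$|\nabla u(x)|^{4}=4(\lambda-m)$'' is not justified: passing to the limit along good $t_n\to 0^+$ gives $|\nabla u(\gamma(t_n))|\to(4(\lambda-m))^{1/4}$, but mere differentiability of $u$ at $x$ does not force $\nabla u(\gamma(t_n))\to\nabla u(x)$. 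A clean fix: from $\varphi\le\lambda$ and $\varphi(0)=m$ you already have $\lambda\ge m$; if $\lambda=m$ then $\varphi\equiv\lambda$ (being nondecreasing and $\le\lambda$), hence $\dot\gamma=\nabla u(\gamma)=0$ a.e., so $\gamma\equiv x$, contradicting $\dot\gamma(t)=\nabla u(x)\neq 0$ a.e.
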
 

\proof  The assumed equality $P (\gamma (t)) = \lambda$ $\mathcal L ^1$-a.e.\ on $[0, \delta)$ implies that $\lambda>m$ (since $\gamma$ is a steepest ascent line of $u$ and $\nabla u (x) \neq 0)$.
Moreover, the function
$\varphi (t) := u (\gamma (t))$ 
is in $AC ([0, \delta))$, because
$u \in {\rm Lip} (\Omega)$ (see e.g.\cite[Lemma 2.9]{ArCrJu}) and $\gamma\in AC ([0, \delta))$,
and it is a solution to the
Cauchy problem
\[
\begin{cases}
\dot{\varphi}(t) = 2\sqrt{\lambda - \varphi(t)} \qquad \mathcal L ^1\text{-a.e. on } [0, \delta)\\
\varphi(0) = m.
\end{cases}
\]
It is readily seen that this Cauchy problem admits
a unique global solution, given precisely by $\overline \varphi$. 
Therefore, we conclude that $\varphi$ agrees with $\overline \varphi$ on $[0, \delta)$. 
\qed

\bigskip
We start now investigating what can be said about the behaviour $P$  along trajectories of problem \eqref{f:geo} and globally over $\Omega$. 
A first elementary observation comes from the pde interpreted pointwise at points of two-differentiability of $u$:

\begin{lemma}\label{l:easy} Let $u$ be the solution to problem \eqref{f:dirich}, and let
$\gamma :[0, \delta ) \to \Omega$ be a local solution to problem \eqref{f:geo}. Assume that $u$ is twice differentiable at $\gamma (t)$ for $\mathcal L ^1$-a.e.\ $t \in [0, \delta)$.  Then it holds
\begin{equation}\label{f:dernulla}
\frac{d}{dt}\big( P (\gamma (t)) \big )  = 0 \qquad \mathcal L ^ 1\hbox{-a.e.\ in } [0, \delta)\,.
\end{equation}
\end{lemma}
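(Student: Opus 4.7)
The plan is to reduce the statement to a pointwise computation performed at any point $x_0=\gamma(t)$ where $u$ is twice differentiable, and then to invoke the chain rule along $\gamma$.

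First I would observe that the equation is attained classically at any point of twice differentiability. Indeed, if $u$ is twice differentiable at $x_0 \in \Omega$, then by definition the pair $(\nabla u(x_0), D^2 u(x_0))$ belongs to both $J^{2,+}_\Omega u(x_0)$ and $J^{2,-}_\Omega u(x_0)$. Combining the viscosity sub- and super-solution inequalities \eqref{f:subsol2} and \eqref{f:supersol2} for $-\Delta_\infty u -1 =0$, we infer
\[
-\langle D^2 u(x_0)\,\nabla u(x_0), \nabla u(x_0)\rangle = 1,
\]
i.e., $\Delta_\infty u(x_0) = -1$.

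Next I would verify that $P$ is Fr\'echet differentiable at any such $x_0$. Writing the second-order Taylor expansion of $u$ at $x_0$, one gets the first-order expansion
\[
|\nabla u(x)|^2 = |\nabla u(x_0)|^2 + 2\langle D^2 u(x_0)(x-x_0), \nabla u(x_0)\rangle + o(|x-x_0|),
\]
and squaring this expansion yields the first-order expansion of $|\nabla u(x)|^4$. Adding $u$ and using again the Taylor expansion of $u$, we conclude that $P$ is Fr\'echet differentiable at $x_0$ with
\[
\nabla P(x_0) = \nabla u(x_0) + |\nabla u(x_0)|^2\, D^2 u(x_0)\nabla u(x_0).
\]

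Then I would apply the chain rule. By Corollary \ref{c:contdiff} the vector field $\nabla u$ is continuous on $\Omega$, so that the trajectory $\gamma$ is of class $C^1$ and differentiable at every $t\in[0,\delta)$. At any $t$ such that $\gamma(t)$ is a point of twice differentiability of $u$, the chain rule together with the expression of $\nabla P$ and the identity $\Delta_\infty u(\gamma(t)) = -1$ gives
\[
\frac{d}{dt}\bigl( P(\gamma(t))\bigr) = \nabla P(\gamma(t))\cdot\dot\gamma(t)
= \nabla P(\gamma(t))\cdot\nabla u(\gamma(t))
= |\nabla u(\gamma(t))|^2\bigl(1 + \Delta_\infty u(\gamma(t))\bigr) = 0.
\]
Since the set of $t$'s at which $\gamma(t)$ is a point of twice differentiability of $u$ has full measure in $[0,\delta)$ by hypothesis, this proves \eqref{f:dernulla}.

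No step presents a serious obstacle: the only mildly subtle points are the passage from viscosity to classical satisfaction of the pde at points of twice differentiability (standard, via the sub- and super-jet characterization), and the Fr\'echet differentiability of $P$ at such points (immediate from Taylor expansion). I would stress that this lemma only yields a.e. vanishing of the derivative along $\gamma$; it does \emph{not} yield constancy of $P\circ\gamma$, since $P\circ\gamma$ is not known to be absolutely continuous — a point the authors explicitly flag in the outline.
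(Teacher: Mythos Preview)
Your proof is correct and follows essentially the same approach as the paper: compute $\nabla P$ at points of twice differentiability, pair it with $\dot\gamma=\nabla u$ via the chain rule, and use the pde to see the result vanishes. The only minor remark is that your appeal to Corollary~\ref{c:contdiff} imports assumption $(h\Omega)$, which the lemma does not state; this is harmless, since any local solution $\gamma$ of \eqref{f:geo} is differentiable a.e.\ anyway, and the paper's own proof simply takes the chain rule for granted.
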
 

\proof
At every point $x$ where $u $ is twice differentiable, it holds
\[
\nabla P(x)=|\nabla u (x)|^2 D^2u(x)\, \nabla u(x)+\nabla  u(x)\, ;$$
we infer that
$$\begin{array} {ll} \langle \nabla P  (x),\nabla u (x) \rangle&=|\nabla u(x)|^2  \langle D^2 u (x)   \nabla u(x) ,\nabla u(x) \rangle+|\nabla u(x)  |^2 \\ \noalign{\medskip} & =
|\nabla u(x) |^2\left(\Delta_\infty u(x) +1\right)= 0\, .
\end{array}
\]

Thus, since by assumption $u$ is twice differentiable at $\gamma (t)$ for $\mathcal L ^1$-a.e.\ $t \in [0, \delta)$, it holds
$$
\frac{d}{dt}\big( P (\gamma (t)) \big ) 
=\Big \langle \nabla P  (\gamma (t) ),
\frac{d}{dt} \gamma  (t) \Big\rangle 
= \Big \langle \nabla P (\gamma (t)), \nabla u
(\gamma (t) ) \Big \rangle =  0 \,
$$
for $\mathcal L ^1$-a.e.\ $t \in [0, \delta)$.
 \qed

\bigskip The main and crucial difficulty in exploiting the information given by Lemma \ref{l:easy}
is the possible lackness of regularity of the function $P \circ \gamma$. In particular, we are not able to ensure that this map is in $AC([0,\delta))$, so to infer from \eqref{f:dernulla} that $P$ is constant along $\gamma$. 

Moreover, in order to obtain some information on the {\it global} behavior of $P$ on $\Omega$, and not merely on a single trajectory, we need to
study the solutions to problem \eqref{f:geo}
under the  aspects of local uniqueness and 
continuation property.

This is the main reason why hereafter we require both 
the assumptions $\ipo$-$\ipu$. Several comments on these assumptions are postponed after the main result of this section, which reads:

\begin{theorem}\label{t:ineqP}
Assume $\ipo$--$\ipu$. Then  the P-function introduced in Definition \ref{d:P} satisfies
\begin{equation}\label{f:tesiP}
\min _{\partial \Omega} \frac{|\nabla u| ^ 4 }{4} \leq P (x)  \leq \max _{\overline {\Omega}} u
\qquad \forall  x \in \overline \Omega \,.
\end{equation}
\end{theorem}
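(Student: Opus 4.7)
The plan is to derive both inequalities in \eqref{f:tesiP} from a single monotonicity property: namely, that $P$ is non-decreasing along the forward gradient flow $\gamma$ of $u$ provided by Lemma~\ref{l:geo}. The pointwise information $\frac{d}{dt}P(\gamma(t)) = 0$ a.e., recorded in Lemma~\ref{l:easy}, is not enough by itself, because $u$ is not known to be $C^{1,1}$ so that $P\circ \gamma$ need not be absolutely continuous. To circumvent this, I would approximate $u$ by its sup-convolutions $u^\varepsilon$, which by Lemma~\ref{l:approx1} are of class $C^{1,1}$ and are sub-solutions of $-\Delta_\infty v = 1$.

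For the approximating gradient flows $\gamma^\varepsilon$, the chain rule applied at points where $D^2u^\varepsilon$ exists (i.e.\ a.e., by Rademacher's theorem applied to $\nabla u^\varepsilon$) gives
\[
\tfrac{d}{dt}P^\varepsilon(\gamma^\varepsilon(t)) = |\nabla u^\varepsilon(\gamma^\varepsilon(t))|^2\bigl(\Delta_\infty u^\varepsilon(\gamma^\varepsilon(t))+1\bigr)\geq 0,
\]
where the inequality uses the subsolution property. Since $u^\varepsilon\in C^{1,1}$, the function $t\mapsto P^\varepsilon(\gamma^\varepsilon(t))$ is Lipschitz, hence absolutely continuous, and $P^\varepsilon$ is genuinely non-decreasing along $\gamma^\varepsilon$. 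Passing to the limit $\varepsilon\to 0$, with the uniform convergence $u^\varepsilon\to u$ and $\nabla u^\varepsilon\to\nabla u$ (the latter available thanks to Corollary~\ref{c:contdiff} and \ipu), together with the convergence of flows supplied by Lemma~\ref{l:approx1}, I obtain that $P$ is non-decreasing along $\gamma$.

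From this monotonicity, the upper bound follows: given $x\in\overline\Omega$, follow $\gamma$ forward. Since $\gamma$ stays in the compact set $\overline\Omega$ and $u\in C^1$, the identity $u(\gamma(t))=u(x)+\int_0^t|\nabla u(\gamma(s))|^2\,ds$ and the boundedness of $u$ force any accumulation point $x^*$ of $\gamma(t_n)$ (as $t_n\to\infty$) to be a critical point of $u$; hence $P(x^*)=u(x^*)\leq \max_{\overline\Omega} u$, and the monotonicity together with continuity of $P$ yield $P(x)\leq\max_{\overline\Omega}u$. For the lower bound, the idea is dual: since each $u^\varepsilon$ is $C^{1,1}$, its gradient flow admits unique backward solutions, and from $x\in\Omega$ the backward trajectory of $u^\varepsilon$ can be followed until it first meets $\partial\Omega$ at some point $y^\varepsilon$. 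Monotonicity of $P^\varepsilon$ along the forward flow gives $P^\varepsilon(y^\varepsilon)\leq P^\varepsilon(x)$; extracting a subsequence with $y^\varepsilon\to y\in\partial\Omega$ and using $u(y)=0$ together with $C^1$ convergence yields $|\nabla u(y)|^4/4 \leq P(x)$, whence $\min_{\partial\Omega}|\nabla u|^4/4\leq P(x)$.

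The main obstacle will be the lower bound, specifically showing that the backward trajectory of $u^\varepsilon$ from any interior $x$ can actually be continued all the way to $\partial\Omega$ without becoming trapped at an interior critical point, and that the resulting exit time and exit point admit a controlled limit as $\varepsilon\to 0$. This is precisely where both hypotheses \ipo\ and \ipu\ play a role: \ipu\ guarantees $C^1$-regularity of $u$ (and hence, via Lemma~\ref{l:approx1}, uniform smoothness of the $u^\varepsilon$) up to the boundary, so that $|\nabla u^\varepsilon|$ is uniformly bounded away from zero in a boundary layer and the exit is effectively in finite time; \ipo, combined with the semiconcavity of $u$ from Corollary~\ref{c:locsemiconc}, controls the geometry of level sets of $u^\varepsilon$ and prevents pathological accumulation at interior critical points for a generic choice of $x$ (after which the desired bound is propagated to all $x\in\overline\Omega$ by continuity of $P$).
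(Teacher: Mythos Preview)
Your overall strategy---sup-convolutions $u^\varepsilon$, monotonicity of $P^\varepsilon$ along the approximate gradient flow, then passage to the limit---is exactly the one the paper adopts. Two points deserve attention.

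First, there is a genuine gap in your chain-rule step. You assert that $\tfrac{d}{dt}P^\varepsilon(\gamma^\varepsilon(t))$ equals $|\nabla u^\varepsilon|^2(\Delta_\infty u^\varepsilon+1)$ ``at points where $D^2u^\varepsilon$ exists (i.e.\ a.e.)''. But ``a.e.'' refers to Lebesgue measure in $\Omega$, while a single trajectory has $\mathcal L^n$-measure zero; there is no a~priori reason why $u^\varepsilon$ should be twice differentiable at $\gamma^\varepsilon(t)$ for $\mathcal L^1$-a.e.\ $t$. The absolute continuity of $P^\varepsilon\circ\gamma^\varepsilon$ gives you existence of the derivative a.e., but not its identification with the desired formula. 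The paper closes this gap in Lemma~\ref{l:approx2} via the area formula: foliating $\Omega_\varepsilon$ by trajectories emanating from $\partial\Omega_\varepsilon$ and using that the Jacobian of the flow is positive, one deduces that for $\mathcal H^{n-1}$-a.e.\ initial point $x_\varepsilon\in\partial\Omega_\varepsilon$ the set of ``bad'' times along $\X_\varepsilon(\cdot,x_\varepsilon)$ is $\mathcal L^1$-null. So monotonicity of $P^\varepsilon$ is available only along \emph{almost every} trajectory, and the approximating initial points must be chosen accordingly.

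Second, your route to the lower bound differs from the paper's. You follow the \emph{backward} flow of $u^\varepsilon$ from $x$ until it exits. The paper instead first invokes Lemma~\ref{l:geo} to find, for the limit flow of $u$, a point $x_0\in\partial\Omega$ and a time $t_0$ with $x=\X(t_0,x_0)$; then it picks $x_\varepsilon\in\partial\Omega_\varepsilon$ with $x_\varepsilon\to x_0$ (satisfying the a.e.\ condition above), applies monotonicity of $P^\varepsilon$ along $\X_\varepsilon(\cdot,x_\varepsilon)$ at the \emph{fixed} times $0,\,t_0,\,t$, and passes to the limit by continuous dependence for ODEs. Your variant can be made to work, but note that the $C^{1,1}$ regularity and the subsolution property of $u^\varepsilon$ are established only on $\Omega_\varepsilon$, so the backward flow naturally exits through $\partial\Omega_\varepsilon$ rather than $\partial\Omega$; you then need $\partial\Omega_\varepsilon\to\partial\Omega$ (which follows from Lemma~\ref{l:approx1}(iii)) together with control on the exit times. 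The paper's anchoring to a single limit trajectory avoids having to track $\varepsilon$-dependent exit data.
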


\begin{remark} \label{r:H12} 

(i) It is a natural question to ask whether  $\ipo$ implies $\ipu$. Indeed, 
under the assumption $\ipo$, the set $\Omega$ is of
class $C^1$ and, by Corollary~\ref{c:contdiff}, the unique viscosity solution $u$
to problem \eqref{f:dirich} belongs to $C ^ 1 (\Omega)$. Hence,  if we assume $\ipo$, requiring the additional
condition $\ipu$ amounts to ask just that $u$ is of class $C ^1$ {\it up to the boundary}. 
In view of the results proved in \cite{WaYu, Hong, Hong2}, it seems reasonable to guess that the latter condition 
is not always fulfilled when $\ipo$ holds, but becomes true under the  additional regularity requirement that $\partial \Omega$ is $C^2$.
However, this boundary regularity property seems to be a highly technical point, 
which goes beyond the purposes of this paper, and we limit ourselves to address it as an interesting open problem. 

(ii)  Under the assumptions $\ipo$--$\ipu$, if $u$ is the unique viscosity solution 
to problem \eqref{f:dirich}, the set of critical points of $u$ agrees  with the set $\argmax_{\overline \Omega} u $ where $u$ attains its maximum over $\overline \Omega$. Indeed, by Theorem~\ref{t:34}, the
function $u^{3/4}$ is concave (and strictly positive) in $\Omega$; hence
its gradient 
vanishes only on points of
maximum of $u$. 

(iii)  Since 
the unique solution
$u$ to \eqref{f:dirich} is strictly positive in $\Omega$ (cf.\ Remark \ref{r:pos}),
it can be extended to a continuously differentiable function,
still denoted by $u$,
in an open set $\widetilde \Omega \supset \overline{\Omega}$,
such that $u < 0$ in $\widetilde \Omega \setminus\overline{\Omega}$.
Consequently, 
also the $P$-function in \eqref{defP}  can be extended to a continuous function in $\widetilde \Omega$, still denoted by $P$. 
\end{remark}

The remaining of this section is devoted to the proof of Theorem \ref{t:ineqP}.

In the sequel, we set for brevity
\begin{equation}\label{f:maxu}
K := \argmax_{\overline{\Omega}} u\, , \qquad \mu:= \max _{\overline {\Omega}} u\,. 
\end{equation}

A first key step is the construction of the \textsl{gradient flow} $\X$ associated with $u$, and the location  
of its terminal points:

\begin{lemma}\label{l:geo} 
Assume $\ipo$--$\ipu$, and let  $x\in \overline \Omega \setminus K$.  Then
there exists a unique solution $\X(\cdot, x)$
to~\eqref{f:geo}
defined in a interval $[0, T(x))$,
where $T(x)\in (0, +\infty]$ is given by
\begin{equation}
\label{f:defT}
T(x) := \sup\{t\geq 0:\ \nabla u (\X(t,x))\neq 0 \}\,.
\end{equation}
Moreover, 
\begin{equation}
\label{f:limu}
\lim_{t\to T(x)^-} \X(t, x)
\in K,\qquad
\lim_{t\to T(x)^-} \nabla u (\X(t,x)) = 0\,.
\end{equation}
Finally, there exist $x_0\in\partial\Omega$
and $t_0\in [0, T(x_0))$ such that
$x = \X(t_0, x_0)$.
\end{lemma}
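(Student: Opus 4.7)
The three inputs I would rely on are: $u\in C^1(\overline{\Omega})$, from $(hu)$ combined with Corollary~\ref{c:contdiff}; local semiconcavity of $u$ in $\Omega$, from Corollary~\ref{c:locsemiconc}; and the coincidence of the critical set of $u$ with $K$, from Remark~\ref{r:H12}(ii). First I would produce the forward flow by applying Peano's theorem to the continuous vector field $\nabla u$, and establish uniqueness via the one-sided Lipschitz bound
\[
\langle \nabla u(y_1)-\nabla u(y_2),\, y_1-y_2\rangle \leq C_\epsilon\,|y_1-y_2|^2 \qquad \text{on } \{u\geq\epsilon\},
\]
equivalent to semiconcavity with constant $C_\epsilon$, closing by a Gr\"onwall argument. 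The identity $\tfrac{d}{dt}u(\X(t,x))=|\nabla u(\X(t,x))|^2\geq 0$ shows that $u\circ\X$ is non-decreasing (bounded above by $\mu$) and prevents the trajectory from leaving $\overline{\Omega}$ across $\partial\Omega$. For $x_0\in\partial\Omega$ the flow starts along $\nabla u(x_0)$, a positive multiple of the inner unit normal (since $u=0$ on $\partial\Omega$ and $u>0$ inside), so the trajectory immediately enters $\Omega$. Boundedness of $|\nabla u|$ on $\overline{\Omega}$ rules out blow-up, and the flow extends as long as $\nabla u(\X(t,x))\neq 0$, giving the maximal interval $[0,T(x))$ with $T(x)$ as in~\eqref{f:defT}.

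For~\eqref{f:limu}, the case $T(x)<\infty$ is routine: Lipschitzianity of $\X(\cdot,x)$ with constant $\|\nabla u\|_\infty$ yields a limit $y\in\overline{\Omega}$ as $t\to T(x)^-$, and continuity of $\nabla u$ together with the definition of $T(x)$ forces $\nabla u(y)=0$, hence $y\in K$. The delicate case is $T(x)=+\infty$: the energy identity $\int_0^\infty|\nabla u(\X(t,x))|^2\,dt\leq\mu-u(x)$ alone does not yield pointwise convergence. To overcome this I would invoke the $3/4$-concavity of $u$ from Theorem~\ref{t:34}: on $\Omega=\{u>0\}$ the function $v:=u^{3/4}$ is concave and $C^1$, and its ascending gradient flow $\sigma(s)$ shares orbits with $\X(\cdot,x)$ through the time change $ds/dt=\tfrac{4}{3}\,u^{1/4}(\X(t,x))$, positive and bounded since $u(\X(t,x))\geq u(x)>0$. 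Concavity of $v$ yields the monotonicity $\tfrac{d}{ds}|\sigma(s)-y|^2\leq -2(v^*-v(\sigma(s)))\leq 0$ for every $y\in K$, while the inequality $v^*-v(\sigma)\leq|\nabla v(\sigma)|\operatorname{diam}(\Omega)$ paired with the energy bound forces $v(\sigma(s))\to v^*$; these two facts together imply that $\sigma(s)$ converges to a point of $K$, and \eqref{f:limu} follows by transporting back through the time change.

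For the surjectivity statement I would integrate backward from $x$ the field $-\nabla u$: Peano yields a local solution $\eta$ with $\eta(0)=x$, along which $u$ is non-increasing. Hence $\eta$ is confined to $\{u\leq u(x)\}\cap\overline{\Omega}$, which is compact and disjoint from $K$ since $u(x)<\mu$; by Remark~\ref{r:H12}(ii) this set contains no critical points of $u$, so $|\nabla u|$ has a positive lower bound $c>0$ on it by continuity and compactness, whence $|\dot\eta|\geq c$ and boundedness of $\Omega$ forces $\eta$ to exit $\Omega$ at some finite time $t_0>0$ at a point $x_0:=\eta(t_0)\in\partial\Omega$. Time reversal turns $\eta$ into a forward orbit from $x_0$ passing through $x$ at time $t_0$; uniqueness from the first paragraph identifies it with $\X(\cdot,x_0)$, giving $x=\X(t_0,x_0)$. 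The main obstacle is the convergence in the case $T(x)=+\infty$, where the $3/4$-concavity of Theorem~\ref{t:34} is precisely the ingredient that upgrades $L^2$-integrability of $\nabla u\circ\X$ to convergence of $\X(t,x)$ to a point of $K$.
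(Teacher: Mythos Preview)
Your proof is correct and shares the paper's overall architecture: forward uniqueness from local semiconcavity (the paper cites Cannarsa--Yu rather than spelling out the one-sided Lipschitz/Gr\"onwall argument you give), monotonicity of $u\circ\X$ to confine trajectories to $\overline\Omega$, and backward integration over the sublevel set $\{u\leq u(x)\}$ to hit $\partial\Omega$ in finite time. The substantive difference is in the treatment of $T(x)=+\infty$. The paper does not invoke the $3/4$-concavity here at all; it argues directly by contradiction: if $m:=\lim_{t\to\infty}u(\X(t,x))<\mu$, then the trajectory remains in the compact set $\{u\leq m\}$, on which $|\nabla u|\geq\alpha>0$ by Remark~\ref{r:H12}(ii), whence $\tfrac{d}{dt}u(\X)\geq\alpha^2$ contradicts boundedness. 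This is shorter than your reparametrization to the gradient flow of $v=u^{3/4}$, but strictly speaking it only yields $u(\X(t,x))\to\mu$ and hence $d(\X(t,x),K)\to 0$ and $\nabla u(\X(t,x))\to 0$; the paper's appeal to boundedness of $\dot\X$ does not, over an infinite time interval, force convergence of $\X(t,x)$ to a \emph{single} point of $K$. Your concavity argument---via the monotonicity $\tfrac{d}{ds}|\sigma(s)-y|^2\leq 0$ for every $y\in K$, which pins down a unique limit once any $\omega$-limit point in $K$ is exhibited---does establish genuine pointwise convergence, so it proves the first assertion in~\eqref{f:limu} as literally stated. For the downstream use of the lemma (the proof of Theorem~\ref{t:ineqP}) only $u(\X)\to\mu$ and $\nabla u(\X)\to 0$ are needed, so the extra precision is not consumed there, but it is a real strengthening.
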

  
\proof 
The local uniqueness 
of forward solutions to~\eqref{f:geo} 
follows from 
Corollary~\ref{c:locsemiconc} and
\cite[Theorem 3.2 and Example 3.6]{CaYu}. 
(Note that 
we think of $u$ as extended to $\widetilde \Omega \supset \overline \Omega$ according to Remark \ref{r:H12} (iii).)
Moreover,
\begin{equation}
\label{f:monot}
\frac{d}{dt} u(\gamma(t))
= \nabla u(\gamma(t))\cdot\dot{\gamma}(t)
= |\nabla u(\gamma(t))|^2 \geq 0. 
\end{equation}
Hence  local forward solutions to~\eqref{f:geo}  remain into $\overline{\Omega}$ for every $t$ at which they are defined. 
As a consequence, 
they are actually global forward solutions, i.e., they are defined in  $[0,+\infty)$ (and remain into $\overline \Omega$ for all $t \in [0, + \infty)$). 

For a fixed $x \in \overline \Omega \setminus K$, by local uniqueness, all forward solutions to \eqref{f:geo}
must coincide in the interval $[0, T(x))$, 
with $T (x)$ defined as in \eqref{f:defT}: from now on, 
we denote by  $\X(\cdot, x)$ \textsl {the unique solution} to \eqref{f:geo} in $[0, T(x))$. 

Now, in order to prove \eqref{f:limu}, we distinguish the two cases $T (x) < + \infty$ and $T (x) = + \infty$. 
 
If $T(x) < +\infty$, we have
$$\lim _{t \to T (x) ^ -} \X(t, x)\in K\, , $$
so that 
$$\lim _{t \to T (x) ^ -} \nabla u(\X(t, x)) =0\,.$$
If $T (x) = + \infty$, assume by contradiction that
\begin{equation}\label{f:max}
m:=  \lim _{t \to +\infty}u ( \X(t, x) ) < \mu
\end{equation}
(note that the limit which defines $m$ exists by monotonicity in view of 
\eqref{f:monot}). 
Since $\nabla u$ is continuous and strictly positive in the compact set $ \{x \in \overline \Omega \ :\ u (x) \leq m \}$, 
from \eqref{f:monot} there exists $\alpha >0$ such that 
\[
\frac{d}{dt} u(\X(t, x)) \geq \alpha > 0
\qquad\forall t\in [0,  + \infty)\,,  
\]
which clearly contradicts  \eqref{f:max}. 
Therefore, it must be $m= \mu$, and,
taking into account that
$\frac{d}{dt} \X(t, x) $ is bounded, 
\eqref{f:limu} is proved also in case $T (x) = + \infty$. 

Now let $\gamma\colon (T^-, +\infty)$,
$T^- \in [-\infty, 0)$,
be a maximal
solution to \eqref{f:geo} in
$\Omega$.
(From the discussion above we have 
$\gamma(t) = \X(t,x)$ for every $t\in [0, T(x))$.)
By \eqref{f:monot}, the map
$t\mapsto u(\gamma(t))$ is strictly monotone increasing
for $t\in (T^-, T(x))$, so that
\[
\gamma(t) \in K' := \{y\in\overline{\Omega}:\
u(y)\leq u(x)   \}\,,
\qquad \forall t\in (T^-, 0].
\]
Since $\nabla u$ is continuous and $\nabla u \neq 0$
in the compact set $K'$, we have that
\[
\frac{d}{dt} u(\gamma(t)) \geq \alpha' > 0
\qquad\forall t\in (T^-, 0].
\]
Hence $T^-$ must be finite and,
being $\dot{\gamma}$ bounded, 
$\lim_{t\to T^-} \gamma(t) =: x_0 \in \partial\Omega$.
By construction, it holds
\[
\X(t, x_0) = \gamma(T^- + t)\qquad
\forall t\in (0, T(x_0)),
\]
hence setting $t_0 := - T^- > 0$ we have that
$\X(t_0, x_0) = \gamma(0) = x$.
\qed

\bigskip 
As we have already mentioned after Lemma \ref{l:easy}, 
such result cannot be directly exploited to infer the constancy of the $P$-function along 
the flow $\X$, because of the possible lack of absolute continuity of $P$. 
In order to overcome this difficulty, we approximate $u$ via its supremal convolutions, defined for $\e >0$ by
\begin{equation}\label{f:ue}
u ^ \e (x) := \sup _{y \in \R ^n} \Big \{ \tilde u (y)  - \frac { |x-y| ^ 2 }{2 \e}  \Big \} \qquad \forall x \in \R ^n\,,
\end{equation}
where $\tilde u$ is a Lipschitz extension of $u$ to $\R ^n$ 
with ${\rm Lip}_{\R ^n} (\tilde u) = {\rm Lip} _{\overline \Omega} (u)$. 

In the next lemma we state the basic properties of the functions $u ^ \e$ which we are going to use in the sequel. 

Let us recall that, 
according to \cite[Lemma 3.5.7]{CaSi}, there exists $R>0$, depending only on ${\rm Lip}_{\R ^n} (\tilde u)$, such that any point $y$ at which the supremum in (\ref{f:ue}) is attained satisfies $|y-x| < \e R$. 
Thus, setting
\begin{equation}\label{f:Ue} 
U _\e:= \big \{ x \in \Omega \ :\ u (x) > \e \big \} \, , \qquad A_\e := \big \{ x \in U _\e \ :\ d_{\partial U _\e}(x) > \e R \big \}\, ,
\end{equation}
there holds
\begin{equation}\label{d:ue2}
u ^ \e (x) = \sup _{y \in U _\e} \Big \{ u (y)  - \frac { |x-y| ^ 2 }{2 \e}  \Big \} \qquad \forall x \in A_\e\,. 
\end{equation}
Moreover, let us define
\begin{equation}\label{f:omegae}
m_\e := \max_{\partial A_\e} u^\e,
\qquad
\Omega_\e := \{x\in A_\e : \ u^\e (x) > m_\e \}\,.
\end{equation}

\begin{lemma}\label{l:approx1} 
Assume $\ipo$--$\ipu$.
Let $u ^\e$ and $\Omega _\e$ be defined respectively as in \eqref{f:ue} and \eqref{f:omegae}. 
Then:
\begin{itemize}
\item[(i)] $u ^ \e$ is of class $C ^ {1, 1}$ on $\Omega _\e$;
\item[(ii)] $u ^\e$ is a sub-solution to $-\Delta _\infty u -1 = 0$ in $\Omega_\e$;
\item[(iii)] as $\e \to 0 ^+$,  it holds
\[
\begin{array}{ll}
&  u ^ \e \to u \qquad \hbox{ uniformly in } \overline \Omega, \\ \noalign{\smallskip}
& \nabla u ^ \e \to \nabla u   \qquad \hbox{ uniformly in } \overline \Omega 
\end{array}
\]
(so that $m_\e \to 0$ and $\Omega_\e$ converges to $\Omega$
in Hausdorff distance). 
\end{itemize}
\end{lemma}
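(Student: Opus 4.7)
The three claims are essentially standard properties of sup-convolutions, which I will adapt to the present setting by combining Corollary~\ref{c:locsemiconc} (local semiconcavity of $u$) and Corollary~\ref{c:contdiff} (continuous differentiability of $u$).

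For (i), I will first observe that $u^\e$ is always semiconvex on $\R^n$ with constant $1/\e$, since the identity $u^\e(x) + |x|^2/(2\e) = \sup_{y\in\R^n}\{\tilde u(y) + \langle x,y\rangle/\e - |y|^2/(2\e)\}$ writes the left-hand side as a supremum of affine functions of $x$, hence convex. To establish semiconcavity on $\Omega_\e$, I will use \eqref{d:ue2} together with the design of $A_\e$ and $\Omega_\e$: for $x\in\Omega_\e$ every maximizer $y^\ast(x)$ of the sup defining $u^\e(x)$ lies in $U_\e$, and its distance from $\partial U_\e$ is bounded away from zero uniformly in $x\in\Omega_\e$. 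Hence the $y^\ast(x)$'s live in a fixed compact subset of $\Omega$, on which $u$ is semiconcave by Corollary~\ref{c:locsemiconc}. A standard envelope argument (formally, at points of twice-differentiability of $u$ one has $D^2 u^\e(x) = D^2 u(y^\ast)(I-\e D^2 u(y^\ast))^{-1}$, the general case following by smoothing) then yields a finite semiconcavity constant for $u^\e$ on $\Omega_\e$. Being simultaneously semiconvex and semiconcave, $u^\e\in C^{1,1}(\Omega_\e)$.

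For (ii), I will invoke the ``magical properties'' of sup-convolutions in the sense of Crandall--Ishii--Lions: for $x\in\Omega_\e$ and $(p,X)\in\osuperjet u^\e(x)$, the envelope theorem identifies $y^\ast(x) = x + \e p \in U_\e\subset\Omega$ and delivers $(p,X)\in J^{2,+}_\Omega u(x+\e p)$. Since $u$ is a viscosity sub-solution of $-\Delta_\infty u - 1 = 0$ in $\Omega$, the inequality $-\langle Xp,p\rangle - 1 \leq 0$ holds at that jet, which is precisely the sub-solution condition for $u^\e$ at $x$.

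For (iii), the uniform convergence $u^\e\to u$ on $\overline\Omega$ follows from the standard estimate $0\leq u^\e - u \leq (\e/2)\,{\rm Lip}(\tilde u)^2$, obtained by bounding $\tilde u(y)-\tilde u(x)\leq {\rm Lip}(\tilde u)|y-x|$ inside the sup and optimizing in $|y-x|$. For the gradient convergence, by hypothesis $\ipu$ and the extension of Remark~\ref{r:H12}(iii), $\nabla u$ is defined and uniformly continuous in a neighborhood of $\overline\Omega$; for $\e$ small, $\overline\Omega\subset A_\e$, and from the first-order condition at $y^\ast(x)$ one reads $\nabla u^\e(x) = (y^\ast(x)-x)/\e = \nabla u(y^\ast(x))$. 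Combining $|y^\ast(x)-x|\leq \e R\to 0$ with uniform continuity of $\nabla u$ then gives $\nabla u^\e\to \nabla u$ uniformly on $\overline\Omega$, whence also $m_\e\to 0$ and $\Omega_\e\to\Omega$ in Hausdorff distance via $u>0$ in $\Omega$ and $u=0$ on $\partial\Omega$. The main obstacle is step (i): one has to verify carefully that the maximizers remain in a relatively compact subset of $\Omega$ on which Corollary~\ref{c:locsemiconc} supplies a uniform semiconcavity constant, and the construction of $A_\e$ and $\Omega_\e$ in \eqref{f:Ue}, \eqref{f:omegae} is precisely tailored to this purpose; parts (ii) and (iii) are then routine consequences of well-known techniques together with the $C^1$ regularity from $\ipu$.
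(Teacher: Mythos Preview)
Your proposal follows essentially the same route as the paper: for (i), semiconvexity (automatic for sup-convolutions) plus semiconcavity (inherited from $u$ through the maximizers, which lie in a fixed compact subset of $\Omega$ where Corollary~\ref{c:locsemiconc} applies) gives $C^{1,1}$; for (ii), the magical properties transport the superjet to the maximizer $y^\ast\in U_\e\subset\Omega$; for (iii), the identity $\nabla u^\e(x)=\nabla u(y^\ast(x))$ together with $|y^\ast(x)-x|\le\e R$ and uniform continuity of $\nabla u$ does the job. The paper's only difference in (i) is cosmetic: instead of the Hessian envelope formula, it verifies the midpoint semiconcavity inequality for $u^\e$ directly, obtaining the explicit constant $2C_\e/(2-\e C_\e)$ from the semiconcavity constant $C_\e$ of $u$ on $U_\e$.

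Two small slips to repair. In (i), the maximizers $y^\ast(x)$ need not have distance bounded away from $\partial U_\e$; what the construction of $A_\e$ actually guarantees (and all you need) is $y^\ast(x)\in U_\e$, and since $\overline{U_\e}=\{u\ge\e\}$ is a compact subset of $\Omega$, Corollary~\ref{c:locsemiconc} supplies a uniform semiconcavity constant there. In (iii), the inclusion $\overline\Omega\subset A_\e$ is false, because $A_\e\subset U_\e=\{u>\e\}\subset\Omega$; the formula $\nabla u^\e(x)=\nabla u(y^\ast(x))$ as stated is therefore only available on $\Omega_\e$. To extend the gradient convergence to all of $\overline\Omega$ you should instead use the $C^1$ extension of $u$ to the neighborhood $\widetilde\Omega\supset\overline\Omega$ from Remark~\ref{r:H12}(iii): for $\e$ small the maximizers for $x\in\overline\Omega$ lie in $\widetilde\Omega$, and the same argument with $\nabla\tilde u$ (uniformly continuous on a compact neighborhood of $\overline\Omega$) goes through.
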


\proof
In order to prove (i), by \cite[Corollary 3.3.8]{CaSi}, it is enough to show that  $u ^\e$ is both semiconcave   and semiconvex on $\Omega _\e$. We have $u ^ \e = - ( -u ) _\e$, where $( -u ) _\e$ is the infimal convolution defined by
\[ 
( -u ) _\e (x) := \inf _{y \in U _\e} \Big \{-  u (y)  + \frac { |x-y| ^ 2 }{2 \e}  \Big \} \qquad \forall x \in \Omega _\e\,.
\]
{}From \cite[Proposition 2.1.5]{CaSi}, it readily follows that $(-u) _\e$ is semiconcave on $\Omega _\e$, and hence that 
$u ^ \e$ is semiconvex on $\Omega _\e$. In order to show that $(-u) _\e$ is semiconvex on $\Omega _\e$ (and hence that $u ^ \e$ is semiconcave on $\Omega _\e$), let $x_i \in \Omega _\e$, $i=1, 2$, be fixed, and let $y _i \in U _\e$ be points where the infima which define $( -u )_\e(x_i)$ are attained. 
Denoting by $C_\e$ the semiconcavity constant of $u$ on $U _\e$, we have
\[
\begin{array}{ll}
& (-u) _\e (x_1) + (-u) _\e (x_2 ) - 2 ( -u ) _\e \Big (\frac{x_1 + x_2} {2} \Big )  \geq \\ 
\noalign{\medskip}
& - u (y _1)  - u (y _2) + 2 u \Big (\frac{y_1 + y_2} {2} \Big ) + \frac{|x_1 - y _1| ^ 2 }{2 \e} 
+ \frac{|x_2 - y _2| ^ 2 }{2 \e}   - \frac{2}{2 \e} \Big | \frac{x_1 + x_2}{2} -  \frac{y_1 + y_2}{2}  \Big | ^ 2 \\ 
\noalign{\medskip}
& \geq - C _\e \Big | \frac{y _1 - y _2}{2} \Big | ^ 2 + \frac{1}{2 \e} \Big ( |x_1 - y _1| ^2 + |x_2 - y _2| ^ 2 - 2 
 \Big | \frac{x_1 + x_2}{2} -  \frac{y_1 + y_2}{2}  \Big |^ 2\Big )  \\
 \noalign{\medskip}
& \geq - \frac{  2 C_\e}{ 2 - \e C_\e} |x_1 - x _2| ^ 2 \,.
\end{array}
\]
Thus $(-u)_\e$ is semiconvex with constant  $\frac{  2 C_\e}{ 2 - \e C_\e}$ on $\Omega _\e$. 

\medskip
\noindent
Let us now prove (ii). Let $x \in \Omega _\e$, and let $(p, X) \in  J  ^ {2, +} _{\Omega _\e} u ^ \e (x)$. It follows from magical properties of supremal convolution ({\it cf.} \cite[Lemma A.5]{CHL}) that $(p, X) \in J ^ {2, +} _{\Omega _\e} u (y)$, where $y$ is a point at which the supremum which defines
$u ^ \e (x)$ is attained.  Since $y \in U _\e \subset \Omega _\e$, it holds $J ^ {2, +} _{\Omega} u (y)= J  ^ {2, +} _{\Omega _\e} u ^ \e(x)$; therefore, we have  $(p, X) \in J ^ {2, +} _{\Omega} u (y)$, which implies $- \langle Xp, p \rangle- 1 \leq 0$. 

Finally, let us turn to the proof of the convergence properties (iii). 
For the uniform convergence of $u ^\e$ to $u$ in $\overline \Omega$ , see for instance \cite[Thm.\ 3.5.8]{CaSi}. 
In order to show the uniform convergence of the gradients, recall first that there holds
$$\nabla u ^ \e (x) = \nabla u ( y_\e (x)) \qquad \forall  x \in \Omega _\e \, ,$$
where $y _\e (x)$ is a point where the supremum which defines $u ^ \e (x)$ is attained \cite[Thm. 3.1 (a)]{GZ}.  Moreover, as already mentioned above, there exists $R>0$, depending only on ${\rm Lip} _{\overline \Omega} (u)$, such that 
$|y _\e (x) - x | < \e R$ for every $x \in \overline \Omega$; in particular, for $x \in \Omega _\e$, there holds $y _\e (x) \in U _\e \subset \Omega$. Then we can write:
\[
\begin{split}
\lim _{\e \to 0 ^ +} \sup _{x \in \overline \Omega} |\nabla u ^ \e (x) - \nabla u (x)| 
 & = 
\lim _{\e \to 0 ^ +} \sup _{x \in \Omega_\e} |\nabla u  ^\e (x ) - \nabla u (x)| 
\\ & = 
\lim _{\e \to 0 ^ +} \sup _{x \in \Omega_\e} |\nabla u  ( y _\e (x) ) - \nabla u (x)| 
\\ & 
\leq {\rm Lip} _{\overline \Omega} (u) \lim _{\e \to 0 ^ +} | y _\e (x ) - x| 
\\ &
\leq {\rm Lip} _{\overline \Omega} (u)  
\lim_{\e\to 0^+} (\e R) = 0 \,.
\hfill\qedhere
\end{split}
\]

\bigskip

Next we observe that, for every $\e >0$, one can consider the gradient flow $\Xe$ 
associated with $u^\e$. Namely, 
for every $x_\e\in\overline{\Omega_\e}$,
the Cauchy problem
\begin{equation}\label{f:geoe}
\begin{cases}
\dot \gamma_\e (t) = \nabla u ^ \e (\gamma _\e (t))\,,
\\
\gamma_\e  (0) = x _\e \in  \overline{\Omega_{\e}}\,, 
\end{cases}
\end{equation}
admits a unique solution 
$\Xe(\cdot, x_\e)\colon [0, +\infty) \to \overline{\Omega_\e}$. Indeed, 
the fact that $\Xe(\cdot, x_\e)$ is
defined in $[0, +\infty)$ follows from the estimate
\[
\frac{d}{dt}u^\e (\gamma_\e(t)) 
= |\nabla u^\e(\gamma_\e(t))|^2 \geq 0,
\] 
so that $\gamma_\e (t)\in\overline{\Omega_\e}$ for
every $t\geq 0$,
while uniqueness follows from the
$C^{1,1}$ regularity of $u^\e$
stated in Lemma~\ref{l:approx1}(i).

In the following key lemma, we establish the behavior, along the  flow $\X _\e$, 
of the approximate $P$-function defined by
 \begin{equation}\label{f:Pe}
P_\e (x) := \frac{|\nabla u^\e(x) | ^ 4}{4} + u ^\e(x) \, ,\qquad x \in \overline{\Omega_\e}\,.
\end{equation}
In fact we show that $P _\e$ increases along $\X _\e$:

\begin{lemma}\label{l:approx2} 
Assume $\ipo$--$\ipu$.
Let $u ^\e$, $\Omega _\e$,  and $P _\e$ be defined respectively as in \eqref{f:ue}, \eqref{f:omegae},  and \eqref{f:Pe}. 
Then, 
for ${\mathcal H} ^ {n-1}$-a.e.\ $x_\e \in\partial{ \Omega _\e}$,
it holds
\[
P _\e(\Xe(t_1, x_\e)) \leq P _\e (\Xe (t_2, x_\e) ) \,  \qquad \forall  \, t_1, t_2  \, \hbox{ with }  0\leq t_1 \leq t_2
\,.
\]
\end{lemma}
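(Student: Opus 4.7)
The plan is to check that, at any point where $u^\e$ is twice differentiable,
\[
\nabla P_\e \cdot \nabla u^\e \;=\; |\nabla u^\e|^{2}\bigl(\Delta_\infty u^\e +1\bigr),
\]
and then to translate this into monotonicity along the flow $\Xe$. The first step combines the $C^{1,1}$-regularity of $u^\e$ on $\Omega_\e$ (Lemma~\ref{l:approx1}(i)), which by Rademacher yields twice differentiability at $\mathcal{L}^n$-a.e.\ point, with the viscosity sub-solution inequality (Lemma~\ref{l:approx1}(ii)): at any point of twice differentiability, testing the sub-solution condition with the second-order Taylor polynomial (plus a vanishing quadratic perturbation) shows that $-\Delta_\infty u^\e -1 \leq 0$ holds in the classical pointwise sense. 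Hence $\nabla P_\e \cdot \nabla u^\e \geq 0$ at $\mathcal{L}^n$-a.e.\ point of $\Omega_\e$.

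The second step promotes this $\mathcal{L}^n$-a.e.\ statement to an along-the-flow statement through a Fubini argument. Let $N\subset\Omega_\e$ be the Lebesgue-null set where either $u^\e$ fails to be twice differentiable or $P_\e$ fails to be differentiable. On any compact subregion of the form $A_\sigma := \{x\in\overline{\Omega_\e}:\ u^\e(x)\leq \max_{\overline{\Omega_\e}} u^\e-\sigma\}$ (with $\sigma>0$), the field $\nabla u^\e$ is continuous, bounded away from zero, and transversal to the level set $\partial\Omega_\e = \{u^\e = m_\e\}$. Consequently the Lipschitz flow map
\[
\Phi\colon \partial\Omega_\e\times[0,T_\sigma]\to A_\sigma, \qquad \Phi(x_\e,t):=\Xe(t,x_\e),
\]
is bi-Lipschitz onto its image, and therefore $\Phi^{-1}(N)$ has $(\mathcal{H}^{n-1}\otimes \mathcal{L}^1)$-measure zero in $\partial\Omega_\e\times[0,T_\sigma]$. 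By Fubini, for $\mathcal{H}^{n-1}$-a.e.\ $x_\e\in\partial\Omega_\e$ the slice $\{t\in[0,T_\sigma]:\ \Xe(t,x_\e)\in N\}$ is $\mathcal{L}^1$-null; a countable exhaustion with $\sigma_k\downarrow 0$ handles all times prior to the trajectory meeting the critical set $K_\e=\{\nabla u^\e=0\}$.

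For such a ``good'' $x_\e$, the composition $s\mapsto P_\e(\Xe(s,x_\e))$ is locally absolutely continuous on $[0,+\infty)$ (a locally Lipschitz function composed with a $C^1$ curve), its derivative equals $|\nabla u^\e|^{2}(\Delta_\infty u^\e+1)\geq 0$ for a.e.\ $s$, so integrating over $[t_1,t_2]$ yields the desired monotonicity $P_\e(\Xe(t_1,x_\e))\leq P_\e(\Xe(t_2,x_\e))$. If the trajectory reaches $K_\e$ at some finite time $T^\ast$, uniqueness of the flow forces it to remain there for all later times, where $\nabla u^\e=0$ and $P_\e$ is constant, so the inequality persists beyond $T^\ast$.

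The main obstacle is precisely the Fubini/change-of-variables step: since $P_\e$ is merely Lipschitz, one cannot a priori exclude that specific trajectories spend positive $\mathcal{L}^1$-time in the exceptional set $N$, and this is exactly why the conclusion must be stated for $\mathcal{H}^{n-1}$-a.e.\ starting point $x_\e\in\partial\Omega_\e$ rather than for all of them. The transversality of $\nabla u^\e$ to $\partial\Omega_\e$ away from the critical set, together with the uniform non-vanishing of $|\nabla u^\e|$ on sublevel compacta, is what makes the flow map bi-Lipschitz and thereby closes the argument.
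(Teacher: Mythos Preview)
Your proof is correct and follows the same strategy as the paper: establish $\nabla P_\e\cdot\nabla u^\e=|\nabla u^\e|^{2}(\Delta_\infty u^\e+1)\geq 0$ at points of twice differentiability via the sub-solution property, transfer the $\mathcal{L}^n$-null exceptional set to an $\mathcal{H}^{n-1}$-a.e.\ statement on $\partial\Omega_\e$ by a change-of-variables argument, and integrate using the local Lipschitz continuity of $t\mapsto P_\e(\Xe(t,x_\e))$. The only cosmetic difference is that the paper carries out the measure-theoretic step via the area formula together with the strict positivity of the Jacobian $J\Xe$ (citing \cite{AmCr}), which sidesteps your need to check bi-Lipschitzness of the flow map or the bounded-below property of $|\nabla u^\e|$ on sublevel compacta.
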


\proof

\medskip
At every point $x$ where $u ^\e$ is twice differentiable, it holds
$$
\nabla P_\e(x)=|\nabla u^\e(x)|^2 D^2u^\e(x)\, \nabla u^\e(x)+\nabla ^\e u(x)\, ;$$
we infer that
$$\langle \nabla P _\e (x),\nabla u^\e (x) \rangle= 
|\nabla u^\e(x) |^2\left(\Delta_\infty u^\e(x) +1\right)\geq 0\, ,
$$
where the last inequality follows from  Lemma \ref{l:approx1}(ii).  

Thus, if $u ^ \e$ is twice differentiable at 
$\Xe (t, x_\e)$, we have
\begin{equation}\label{f:derneg}
\frac{d}{dt}\big( P_\e (\Xe (t, x_\e)) \big ) 
 = \Big \langle \nabla P _\e(\Xe (t, x_\e)), \nabla u^\e
(\Xe (t, x_\e)) \Big \rangle \geq 0 \, .
\end{equation}

Let us now show that, for  ${\mathcal H} ^ {n-1}$-a.e.\ $x_\e \in \partial \Omega _\e$, the inequality (\ref{f:derneg}) is satisfied  ${\mathcal L} ^ {1}$-a.e.\ on 
$[0, +\infty)$.
To that aim we have to prove that,  for  ${\mathcal H} ^ {n-1}$-a.e.\ $x_\e \in \partial \Omega _\e$,  $u ^\e$ is
twice differentiable at $\Xe (t, x_\e)$ for ${\mathcal L} ^ {1}$-a.e.\  $t \in [0, +\infty)$. 
Namely we have to show that, 
setting 
\[
N (x_\e) :=  \Big \{ t \geq 0 \ :\ u ^\e  \text{ is not twice differentiable at }  \Xe (t, x_\e) \Big \}\, ,\qquad  x_\e \in \partial \Omega _\e\, , 
\]
it holds
\begin{equation}\label{f:null}
{\mathcal L} ^ {1} (  N (x_\e) ) = 0 \qquad \hbox{ for } {\mathcal H} ^ {n-1} \text{-a.e.\ } x_\e \in \partial \Omega _\e\, .
\end{equation}

By construction the set
\[
E_\e:= \Big \{ \Xe (t, x_\e) \ :\ x_\e \in \partial \Omega _\e\, ,\ t \in N (x_\e) \Big \}
\]
is contained into the set of points where $u^\e$ is not twice differentiable.

By Lemma \ref{l:approx1}(i), we know that $u ^ \e$ is twice differentiable ${\mathcal L}^n$-a.e.\ on $\Omega _\e$. 
This implies that the set $E_\e$ is Lebesgue negligible. By the area formula, we have
\[
0 = {\mathcal L } ^n (E_\e) = \int _{\partial \Omega _\e} \, d {\mathcal H} ^ {n-1} (x_\e) 
\int _{N (x_\e)} J \Xe(t, x_\e) \, dt  \,,
\]
where $J \Xe$ is the Jacobian of the function $\Xe$
with respect to the second variable.
Since this Jacobian is strictly positive ({\it cf.}\ \cite[eq.\ (5)]{AmCr}), we infer that (\ref{f:null}) holds true. 
Hence,  for  ${\mathcal H} ^ {n-1}$-a.e.\ $x_\e \in \partial \Omega _\e$, the inequality (\ref{f:derneg}) is satisfied  ${\mathcal L} ^ {1}$-a.e.\ on $[0, +\infty)$.
Then, for   ${\mathcal H} ^ {n-1}$-a.e.\ $x_\e \in \partial \Omega _\e$,
integrating \eqref{f:derneg} over $[t_1, t_2]$ and taking into account that, by Lemma \ref{l:approx1}(i), the map 
$t \mapsto P _\e \circ \Xe(t, x_\e)$ is 
locally Lipschitz continuous on 
$[0, +\infty)$, the lemma is proved.  
 \qed

\bigskip
\begin{remark}
We point out that an analogous procedure as the one adopted above does not work with the infimal convolutions $u _\e$ in place of the supremal convolutions $u ^ \e$. The reason
is simply the fact that such infimal convolutions are not necessarily of class $C ^ {1,1}$. 
\end{remark}

We are finally in a position to give the

\medskip
{\bf Proof of Theorem \ref{t:ineqP}}.  
Let us show firstly that 
\begin{equation}\label{f:prelim}
\min _{\partial \Omega} \frac{|\nabla u| ^ 4 }{4} \leq \mu \,.
\end{equation}
Let $B= B _\rho (z_0)$ be a ball internally tangent to $\Omega$ at $y _0$, and let $\phi _B$ denote the unique solution to (\ref{f:dirich}) on $B$. 
By the comparison principle proved in \cite[Thm. 3]{LuWang}, it holds $u \geq \phi _B$ on $B$. Hence, 
$$u (z_0) \geq \phi _B (z_0) = c_0 \rho ^ {4/3}$$
and
$$|\nabla u (y _0) |\leq |\nabla \phi _B (y _0) |  = ( 3 \rho ) ^ {1/3}\,.$$
Therefore, we have
$$\min _{\partial \Omega} \frac{|\nabla u| ^ 4 }{4} \leq 
 \frac{|\nabla u|^ 4 (y_0) }{4}   \leq c_0 \rho ^ {4/3} \leq u (z_0) \leq 
\mu \,.$$

Let us now prove  \eqref{f:tesiP}.
It is enough to show that  \eqref{f:tesiP} holds for all 
$x \in \Omega \setminus K$ (otherwise $P(x) = u (x)$ and we are done by the inequality \eqref{f:prelim}). 
By Lemma~\ref{l:geo},
given $x\in\Omega\setminus K$, there exist
$
x_0\in\partial\Omega $ and 
$t_0\in[0, T(x_0))$ such that $x = \X (t_0, x_0)$. 
By Lemma~\ref{l:approx2}, we may find a sequence of points
 $x_\e\in \partial\Omega_\e$ converging
to $x_0$ such that, 
for every $t\geq t _0$, we have
\[
P_\e(x_\e) \leq
P_\e (\Xe(t_0, x_\e)) \leq P_\e(\Xe(t, x_\e))\,.
\] 
We now pass to the limit as $\e \to 0 ^+$ in the above inequalities: 
by using the continuous dependence for ordinary differential equations
(see e.g.\ \cite[Lemma~3.1]{Hale}), 
and the uniform convergences stated in
Lemma~\ref{l:approx1}(iii), we get
\begin{equation}\label{f:pineq}
P(x_0) \leq P(x) \leq P(\X(t, x_0))\,.
\end{equation}
We have
\[
P(x_0) = \frac{|\nabla u(x_0)|^4}{4}
\geq \min_{\partial\Omega} \frac{|\nabla u|^4}{4};
\]
on the other hand, from \eqref{f:limu}, it holds
\[
\lim_{t\to T(x_0)^-}
P(\X(t, x_0)) = 
\lim_{t\to T(x_0)^-}
u(\X(t, x_0))
\leq \mu\,.
\]
Then \eqref{f:tesiP} follows from \eqref{f:pineq}.
\qed 

\section{Geometric results for Serrin's problem} \label{secgeo}

\begin{theorem}\label{t:serrin2} Assume $\ipo$--$\ipu$. 
Further, assume that there exists an inner  ball $B$ of radius $\rho _\Omega$ which meets $\partial \Omega$ at two points
lying on the same diameter of $B$.  
If the overdetermined boundary value problem $(\ref{f:serrin})$ admits a solution $u$, then it holds $u= \phi _\Omega$ (with $\rho _\Omega = a ^ 3/3$), and 
$\Cut (\Omega ) = \high (\Omega)$. 
\end{theorem}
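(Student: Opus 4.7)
The plan is to show that the $P$-function from Definition \ref{d:P} is constant on $\overline\Omega$, whereupon Proposition \ref{p:P1} yields both $u=\phi_\Omega$ and $\Cut(\Omega)=\high(\Omega)$; the identity $\rho_\Omega=a^3/3$ will follow from the matching relation $a=(3\rho_\Omega)^{1/3}$ obtained along the way. Under $\ipu$, $u\in C^1(\overline\Omega)$ combined with the overdetermined condition gives $P\equiv\tfrac{a^4}{4}$ on $\partial\Omega$; at any interior maximum point $x_M$ (which exists by compactness, and where $\nabla u(x_M)=0$ by Remark \ref{r:H12}(ii)), one has $P(x_M)=\mu:=\max_{\overline\Omega}u$. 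Theorem \ref{t:ineqP} then sandwiches
\[
\tfrac{a^4}{4}\le P(x)\le\mu\qquad\forall x\in\overline\Omega,
\]
so proving that $P$ is constant is equivalent to proving $\tfrac{a^4}{4}=\mu$.

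To produce the matching values I would first extract two lower bounds from the inner diametral ball $B=B_{\rho_\Omega}(z_0)$ with touch points $y_1, y_2\in\partial\Omega$. Lu--Wang's comparison \cite[Thm.~3]{LuWang} gives $u\ge\phi_B$ in $B$, whence (a) $u(z_0)\ge\phi_B(z_0)=c_0\rho_\Omega^{4/3}$, so $\mu\ge c_0\rho_\Omega^{4/3}$, and (b) since $u-\phi_B\ge 0$ near $y_i$ with equality at $y_i$, differentiating in the inner normal direction gives $a=|\nabla u(y_i)|\ge|\nabla\phi_B(y_i)|=(3\rho_\Omega)^{1/3}$, so $\tfrac{a^4}{4}\ge c_0\rho_\Omega^{4/3}$.

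The crucial remaining step is the matching upper bound $\mu\le c_0\rho_\Omega^{4/3}$; combined with the above lower bounds this pinches everything to $\tfrac{a^4}{4}=\mu=c_0\rho_\Omega^{4/3}$, and Theorem \ref{t:ineqP} then forces $P\equiv c_0\rho_\Omega^{4/3}$, so Proposition \ref{p:P1} delivers $u=\phi_\Omega$ and $\Cut(\Omega)=\high(\Omega)$. The diametral touching enters essentially here: letting $\nu$ be the unit vector from $y_1$ to $y_2$, convexity forces $\Omega$ into the slab of width $2\rho_\Omega$ between the tangent hyperplanes at $y_1$ and $y_2$, and the diameter segment $[y_1, y_2]$ lies in $\overline\Omega$. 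My plan is to blend (i) the monotonicity of $P$ along the gradient flows $\X(\cdot, y_1)$ and $\X(\cdot, y_2)$, obtained by passing to the limit in Lemma \ref{l:approx2} as in the proof of Theorem \ref{t:ineqP}, together with the terminal value $\lim_{t\to T(y_i)^-} P(\X(t,y_i))=\mu$ from Lemma \ref{l:geo}; (ii) the explicit bound $|\nabla u|^2\le 2\sqrt{\mu-u}$ extracted from the upper side of Theorem \ref{t:ineqP}; and (iii) integration of $\dot u=|\nabla u|^2$ along $[y_1,y_2]$, exploiting the known endpoint slopes $\pm a$ of $s\mapsto u(y_1+s\nu)$.

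\textbf{Main obstacle.} The upper bound on $\mu$ is the hard part. The naive comparison with a supersolution defined on the slab fails: the natural slab profile $w(x)=c_0\bigl[\rho_\Omega^{4/3}-(\rho_\Omega-\min(s,2\rho_\Omega-s))^{4/3}\bigr]$ with $s=\langle x-y_1,\nu\rangle$ is only a viscosity subsolution of $-\Delta_\infty v=1$ at the midplane where $\nabla w=0$, not a supersolution there. Consequently no single comparison principle closes the gap, and the estimate $\mu\le c_0\rho_\Omega^{4/3}$ must be coaxed out by the more delicate combination of $P$-flow monotonicity, explicit gradient bounds, and diameter-segment analysis outlined above.
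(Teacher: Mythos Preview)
Your overall architecture is exactly the paper's: squeeze $P$ between $a^4/4$ and $\mu$ via Theorem~\ref{t:ineqP}, show these coincide, then invoke Proposition~\ref{p:P1}. The lower bounds $\mu\ge c_0\rho_\Omega^{4/3}$ and $a\ge(3\rho_\Omega)^{1/3}$ from the inner-ball comparison are also fine. The gap is precisely where you place it: the upper bound $\mu\le c_0\rho_\Omega^{4/3}$ (equivalently $a\le(3\rho_\Omega)^{1/3}$). Your ``blend'' (i)--(iii) is not a proof: item~(iii) conflates the gradient-flow ODE $\dot\varphi=|\nabla u|^2$ with differentiation along the fixed segment $[y_1,y_2]$, and the inequality $|\nabla u|^4\le 4(\mu-u)$ from Theorem~\ref{t:ineqP} only yields a differential inequality for $(\mu-u)^{3/4}$ along the segment which, when combined with $u(z_0)\ge c_0\rho_\Omega^{4/3}$, reduces to the tautology $(1-t^{4/3})^{3/4}\ge 1-t$ and gives no contradiction.

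The missing idea---and the paper's actual device---is an \emph{outer} comparison. Your rejection of it is based on an error: the slab profile $w$ \emph{is} a viscosity supersolution at the midplane, because $J^{2,-}w$ is empty there (along $\nu$ one has $w(x_0+t\nu)-w(x_0)=-c_0|t|^{4/3}$, which dominates any quadratic from below, while $\nabla w(x_0)=0$ forces $p=0$; hence no $(0,X)$ belongs to the subjet and the supersolution condition holds vacuously). The paper makes this rigorous and bounded by building a stadium-like domain $D\supset\Omega$ with $\rho_D=\rho_\Omega$ and $\Cut(D)=\high(D)$, tangent to $\partial\Omega$ at the same two diametral points $y_\pm$ (take $D=\{d_S<\rho_\Omega\}$ for a large $(n{-}1)$-disk $S$ in the midplane). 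Then $\phi_D$ is the genuine viscosity solution on $D$, so Lu--Wang comparison gives $u\le\phi_D$ on $\Omega$. Since $d_{\partial B}=d_{\partial D}$ on the diameter through $y_\pm$, one has $\phi_B=\phi_D$ there, and the sandwich $\phi_B\le u\le\phi_D$ pins $u$ down \emph{exactly} on that diameter. This immediately gives $|\nabla u(y_\pm)|=(3\rho_\Omega)^{1/3}$ (hence $a=(3\rho_\Omega)^{1/3}$) and shows the center $x_0$ is a local maximum of $u$; by Remark~\ref{r:H12}(ii) it is then a global maximum, so $\mu=u(x_0)=c_0\rho_\Omega^{4/3}=a^4/4$. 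No flow gymnastics are needed.
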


\proof   Let $B= B _{\rho _\Omega}(x_0)$ be an inner ball of radius $\rho _\Omega$ which meets $\partial \Omega$ at two diametral points $y _\pm$. 
We claim that 
there exists a domain $D$ with $\Cut (D) = \high (D)$  (and $\rho _D = \rho _\Omega$), 
such that
\begin{equation}\label{f:condD}
B \subset \Omega \subset D \,, \qquad \partial B \cap \partial \Omega = \partial B \cap \partial D  = \{ y_+, y_- \} \,. 
\end{equation}

Namely,  assume without loss of generality that the center $x_0$ of $B$ is  the origin, and that $y _\pm = \pm \rho _\Omega e _n$.  
Let $S \subset \{ x_n = 0 \}$ be a $(n-1)$-dimensional disk centered at the origin, with radius sufficiently large so that
$\Omega$ is contained into the cylinder $S \times [ - \rho _\Omega, \rho _\Omega]$. 
Then the conditions (\ref{f:condD}) are satisfied by taking 
$$D:= \{ x \in \R ^n\ :\ d_{S } (x)  < \rho _\Omega \}\,.$$

Now, denote by $\phi _B$ and $\phi _D$ the web functions  defined according to (\ref{defphi}) (respectively on $B$ and on $D$), 
and by  $\gamma$ the diameter of $B$ containing $y_+$ and $y_-$. 
Since $-\Delta _\infty u = -\Delta _\infty \phi _B= 1$ on $B$, and $u \geq 0= \phi _B$ on $ \partial B$, by the comparison principle proved in \cite[Thm. 3]{LuWang}, it holds $u \geq \phi _B$ on $B$. 
In the same way, we get the inequality $u \leq \phi _D$ on $D$. 
We thus have
\begin{equation}\label{f:ineq}\phi _B (x) \leq u (x) \leq \phi _D (x) \qquad \forall x \in B \,.\end{equation}

We can deduce several consequences from these inequalities. 
Firstly we observe that, since both the functions $\phi _B$ and $\phi _D$ have a relative maximum at $x_0$, by (\ref{f:ineq}) the same property holds true for $u$. Hence $x_0$ is a critical point of $u$.  In turn, by Remark \ref{r:H12}, we know that 
\begin{equation}\label{f:argmax}
x_0 \in K\,. 
\end{equation}

Moreover we notice that, since the distance functions $d_{\partial B}$ and $d _{\partial D}$ agree on $\gamma$ (as both coincide with $d _{\partial \Omega}$), there holds
\begin{equation}\label{f:equ} \phi _B (x) = \phi _D (x) \qquad \forall x \in \gamma\,.\end{equation}
As a consequence of (\ref{f:ineq}) and (\ref{f:equ}), we deduce that
$$u (x) = \phi _B (x) = \phi _ D (x) \qquad \forall x \in \gamma \,.$$
Namely, there holds
\begin{equation}\label{f:udiam}
u (x) =  c_0 \left[\rho_\Omega ^{4/3} - (\rho_\Omega - \dist_{\partial \Omega}(x))^{4/3}\right] \qquad \forall x \in \gamma \,.
\end{equation}
It follows from (\ref{f:udiam}) that the following relationship holds between the value of $|\nabla u|$ at the boundary points $y_\pm $ and the inradius:
$$|\nabla u (y_\pm) | = \frac{4}{3} c_0 \rho _\Omega ^ {1/3} = (3 \rho _\Omega) ^ {1/3} \qquad i  = 1, 2 \,.$$ 
Recalling that by assumption $u$ satisfies the Neumann condition $|\nabla u|(y) = a$ for all $y \in \partial \Omega$, 
we deduce that the value of the parameter $a$ is related to the inradius by the equality
\begin{equation}\label{f:valuea} 
a = (3 \rho _\Omega) ^ {1/3}\,.
\end{equation}
Thus, using \eqref{f:argmax} and (\ref{f:valuea}), we get
$$\mu = u (x_0) = c_0 \rho _\Omega ^ {4/3} = \frac{a ^ 4}{4}\,.$$
By Theorem \ref{t:ineqP}, this implies that the $P$-function associated with $u$ according 
to $(\ref{defP})$ is constant on $\overline \Omega$:
$$P (x) \equiv \frac{a ^ 4}{4} \qquad \forall x \in \overline \Omega \,.$$
By Proposition \ref{p:P1}, this implies that $u = \phi _\Omega$ (with $\rho _\Omega = a ^ 3/3$), and $\Cut (\Omega) = \high (\Omega)$.  \qed

\begin{figure}[ht]
\centering
\includegraphics{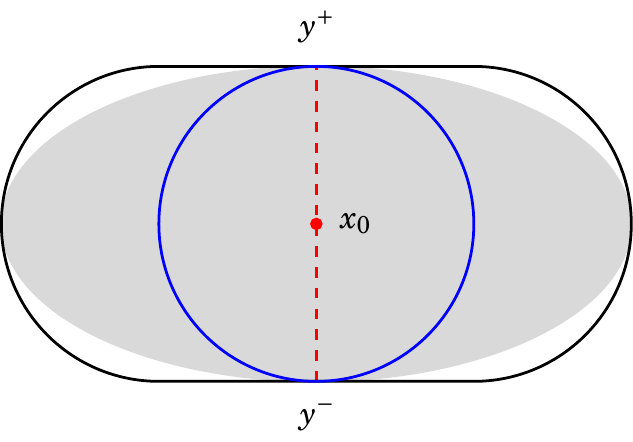}
\caption{A domain $\Omega$ (in gray) as in Theorem \ref{t:serrin2}, with $B \subset \Omega \subset D$.}   
\label{fig:ellipse}   
\end{figure}
\bigskip

\bigskip
By  combining Theorem \ref{t:serrin2} with the geometric results we obtained in \cite{CFb}, we can provide some geometric information 
on the shape of domains where the Serrin-type problem (\ref{f:serrin}) admits a solution, according to Corollary \ref{corgeo} below. 
We emphasize that  symmetry may hold or may fail according in particular to the boundary regularity of $\Omega$.

\bigskip
\begin{corollary}\label{corgeo} 

Under the same hypotheses of  Theorem \ref{t:serrin2}, we have:
\begin{itemize}
\item[(i)] if $\Omega$ is of class $C ^2$, then $\Omega$ is a ball;
\smallskip
\item[(ii)] if $n = 2$,   the set  $S:=\Cut (\Omega) = \high (\Omega)$ 
is a line segment (possibly degenerated into a point), and $\Omega$ is the stadium-like domain
$$\Omega=   \{ x \in \R ^2 \ :\ {\rm dist} (x, S) < \rho_\Omega \}\,.$$
 
\end{itemize}
\end{corollary}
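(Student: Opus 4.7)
The strategy is to reduce both parts to the classification of planar convex sets satisfying $\Cut(\Omega)=\high(\Omega)$ already established by the authors in \cite{CFb}. By Theorem \ref{t:serrin2}, the existence of a solution to the overdetermined problem \eqref{f:serrin} immediately forces $\Cut(\Omega)=\high(\Omega)$ (as well as $u=\phi_\Omega$ and $\rho_\Omega=a^3/3$). Therefore the geometric conclusions in (i) and (ii) follow once we translate the purely geometric condition $\Cut(\Omega)=\high(\Omega)$ into a concrete description of $\Omega$ in the plane.

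\textbf{Plan for (ii).} In dimension $n=2$ the characterization proved in \cite{CFb} says that a nonempty open bounded convex set $\Omega\subset\R^2$ satisfies $\Cut(\Omega)=\high(\Omega)$ if and only if it is a tubular neighborhood of width $\rho_\Omega$ around a $C^{1,1}$ one-dimensional manifold $S$. Because $\Omega$ is convex, the ``spine'' $S$ is itself convex; being a one-dimensional manifold in $\R^2$ this forces $S$ to be a line segment, possibly degenerated to a single point. We then directly get
\[
\Omega \;=\; \{x\in\R^2:\ d_S(x)<\rho_\Omega\},
\]
which is the stadium-like domain of (ii).

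\textbf{Plan for (i).} Assume in addition that $\partial\Omega$ is of class $C^2$. By (ii), $\Omega$ is a stadium built on a segment $S$ of some length $\ell\geq 0$. If $\ell>0$, then $\partial\Omega$ consists of two straight edges parallel to $S$, on which the curvature vanishes identically, smoothly joined to two semicircular caps of constant curvature $1/\rho_\Omega>0$. The curvature of a $C^2$ planar boundary must be continuous, contradicting the jump between flat and curved portions. Hence $\ell=0$, the segment $S$ degenerates to a point, and $\Omega=B_{\rho_\Omega}(S)$ is a disk.

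\textbf{Expected main obstacle.} The only genuine input here is the classification theorem from \cite{CFb}; once it is invoked, the rest of the argument is routine, the key observations being that convexity of $\Omega$ forces the one-dimensional manifold to be a segment, and that the curvature jump between the flat and round portions of a proper stadium is incompatible with $C^2$ regularity of the boundary. No further analytic input on the solution $u$ is needed beyond Theorem \ref{t:serrin2}.
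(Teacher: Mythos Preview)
Your argument for (ii) is correct and essentially identical to the paper's: invoke the classification from \cite{CFb} to write $\Omega$ as a tubular neighborhood of a $C^{1,1}$ one-dimensional manifold $S$, then use convexity of $\Omega$ (hence of $\high(\Omega)=S$) to force $S$ to be a segment or a point.

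There is, however, a genuine gap in your proof of (i). Part (i) is stated under the hypotheses of Theorem~\ref{t:serrin2}, which hold in arbitrary dimension $n$; the only additional assumption is that $\partial\Omega$ is $C^2$. Your argument opens with ``By (ii), $\Omega$ is a stadium built on a segment $S$\ldots'', but (ii) is available only when $n=2$. Hence your curvature-jump argument establishes (i) only in the planar case and says nothing for $n\geq 3$. The paper proceeds differently: it invokes a separate result, \cite[Thm.~12]{CFb}, which asserts directly, in any dimension, that a domain of class $C^2$ satisfying $\Cut(\Omega)=\high(\Omega)$ is a ball. To repair your approach in higher dimensions you would need an $n$-dimensional analogue of the tubular-neighborhood classification, but that is not what \cite{CFb} provides for $n\geq 3$, and the planar curvature-discontinuity argument has no straightforward substitute there.
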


\proof  Statement (i) follows directly from \cite[Thm.\ 12]{CFb}. Concerning statement
(ii), from 
\cite[Thm.\ 6]{CFb} we obtain
then the set  $S:=\Cut (\Omega) = \high (\Omega)$ 
is either a singleton or a $1$-dimensional manifold of class $C ^ {1,1}$, and $\Omega$ is the tubular neighborhood
$\Omega=  \{ x \in \R ^2 \ :\ {\rm dist} (x, S) < \rho_\Omega \}$.
Since we assumed that $\Omega$ is convex, the manifold $S$ must be necessarily a line segment, possibly degenerated into a point. 
\qed

\bigskip

\section{Regularity thresholds for the Dirichlet problem}\label{secreg}

As mentioned in the Introduction, it is well-known that for infinity--harmonic
functions one cannot expect $C^{1,1}$ regularity;
the expected regularity is in fact
$C^{1,\alpha}$ with $\alpha \leq 1/3$, which has been 
by now proved only in two space dimensions.

In this section we show that
a similar situation occurs also for
the solution $u$ to problem~\eqref{f:dirich}, specifically in view of its behavior near the set $K$ defined in \eqref{f:maxu}. 

We start with the following lemma, which allows to define the gradient flow associated with $u$ 
under the assumption that it is $C ^ {1,1}$ outside $K$.

\begin{lemma}\label{l:geo2}
Let $\Omega\subset\R^n$ be a nonempty open bounded set,
and let $u$ be the solution 
to problem \eqref{f:dirich}.
Assume that   
$u\in C^{1,1}({\Omega}\setminus K)$.
Then, for a.e.\ $x \in \Omega \setminus K$, 
there
exists a unique solution $\X(\cdot, x)$ to \eqref{f:geo},
defined in a interval $[0, T(x))$, where
$T(x)$ is defined by
\[
T(x) := \sup\{t\geq 0:\ \X(s,x)\in {\Omega}\setminus K\ \  \forall s\in [0,t] \}\,.
\]

Moreover, it holds $T (x) < + \infty$ and 
\begin{equation}\label{f:Treg}
\lim_{t\to T(x)^-} \X(t,x)  \in K.
\end{equation}
\end{lemma}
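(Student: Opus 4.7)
The starting observation is that, under the assumption $u\in C^{1,1}(\Omega\setminus K)$, the vector field $\nabla u$ is locally Lipschitz on the open set $\Omega\setminus K$, so the classical Cauchy--Lipschitz theorem yields, for every $x\in\Omega\setminus K$, a unique maximal forward solution $\X(\cdot,x)$ to \eqref{f:geo} remaining in $\Omega\setminus K$, defined on $[0,T(x))$ with $T(x)$ as in the statement. Moreover, by Rademacher's theorem applied to $\nabla u$, the function $u$ is twice differentiable at $\mathcal L^n$-a.e.\ point of $\Omega\setminus K$; at any such point the viscosity equation $-\Delta_\infty u=1$ holds pointwise, forcing $\nabla u\neq 0$. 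Thus $\{\nabla u=0\}\cap(\Omega\setminus K)$ is Lebesgue-negligible and, for a.e.\ $x\in\Omega\setminus K$, one may assume $\nabla u(x)\neq 0$.

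For such an $x$, setting $\gamma:=\X(\cdot,x)$, I would first check that $\nabla u(\gamma(t))\neq 0$ throughout $[0,T(x))$. Indeed, if $\nabla u(\gamma(t_0))=0$ at some $t_0\in(0,T(x))$, both $\gamma$ and the constant path $t\mapsto \gamma(t_0)$ would solve \eqref{f:geo} with the same initial datum at time $t_0$; since $\nabla u$ is Lipschitz in a neighborhood of $\gamma(t_0)\in\Omega\setminus K$, backward uniqueness would force $\gamma$ to be locally constant near $t_0$, and an iteration would yield $\gamma\equiv x$ on $[0,t_0]$, against $\nabla u(x)\neq 0$. Consequently $u\circ\gamma$ is strictly increasing; since $u>0$ in $\Omega$ (Remark \ref{r:pos}) while $u=0$ on $\partial\Omega$, the trajectory stays in the compact subset $\{u\geq u(x)\}$ of $\Omega$, so the only way it can leave $\Omega\setminus K$ is by approaching $K$.

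To rule out $T(x)=+\infty$ I would invoke the $P$-function. Because $u\in C^{1,1}(\Omega\setminus K)$, the map $P=\tfrac{1}{4}|\nabla u|^4+u$ is locally Lipschitz on $\Omega\setminus K$, hence $P\circ\gamma$ is locally Lipschitz on $[0,T(x))$. By the computation carried out in the proof of Lemma \ref{l:easy}, its derivative vanishes at every $t$ at which $u$ is twice differentiable at $\gamma(t)$; an area-formula/Fubini argument entirely analogous to the one used in Lemma \ref{l:approx2} shows that, for a.e.\ starting point $x$, these $t$'s form a subset of full measure in $[0,T(x))$. By absolute continuity one then has $P(\gamma(t))\equiv P(x)=:\lambda$, and $\lambda>u(x)$. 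Proposition \ref{p:P2} therefore gives the explicit formula $u(\gamma(t))=\lambda-(\sqrt{\lambda-u(x)}-t)^2$ as long as $t<\sqrt{\lambda-u(x)}$. If $T(x)$ exceeded $\sqrt{\lambda-u(x)}$, then at $t=\sqrt{\lambda-u(x)}$ one would have $\gamma(t)\in\Omega\setminus K$ with $u(\gamma(t))=\lambda$ and $|\nabla u(\gamma(t))|=0$, contradicting the previous paragraph; hence $T(x)\leq\sqrt{\lambda-u(x)}<+\infty$.

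Finally, $|\dot\gamma|=|\nabla u\circ\gamma|$ is bounded by the Lipschitz constant of $u$ on $\overline\Omega$, so $\gamma$ extends continuously up to $T(x)$. The resulting limit lies in $\overline{\Omega\setminus K}\cap\overline\Omega$; it cannot belong to $\partial\Omega$ (since $u(\gamma)\geq u(x)>0$), and it cannot belong to the open set $\Omega\setminus K$ (by maximality of $T(x)$ together with local existence), so it must lie in $K$. The main delicate point in this plan is the measure-theoretic bookkeeping ensuring that, for a.e.\ $x$, the trajectory $\gamma$ meets points of twice-differentiability of $u$ for a.e.\ $t$: this is where the a.e.\ qualifier on $x$ enters, and it has to be handled by transplanting the area-formula argument already developed in the proof of Lemma \ref{l:approx2}.
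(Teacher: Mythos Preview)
Your proposal is correct and follows essentially the same strategy as the paper: existence/uniqueness via Cauchy--Lipschitz on $\Omega\setminus K$, constancy of $P$ along trajectories (using the area-formula argument of Lemma~\ref{l:approx2} to guarantee that, for a.e.\ starting point, $u$ is twice differentiable along the trajectory for a.e.\ $t$), and then Proposition~\ref{p:P2} together with forward/backward uniqueness to force $T(x)<+\infty$ and the limit into $K$. The only organizational difference is that the paper carries out the ``delicate measure-theoretic bookkeeping'' by first restricting to level sets $\{u=m\}$ (which, being codimension~$1$, are the direct analogue of $\partial\Omega_\e$ in Lemma~\ref{l:approx2}) and then applies the coarea formula twice to pass from $\mathcal H^{n-1}$-a.e.\ $x$ on a.e.\ level set to $\mathcal L^n$-a.e.\ $x\in\Omega\setminus K$; this is precisely the transplantation you anticipate.
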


\proof
For every $x\in\Omega\setminus K$,
any solution of \eqref{f:geo} cannot
exit from $\{u \geq u(x)\}$ by the same argument
given in the proof of Lemma~\ref{l:geo}, and hence they are actually defined on $[0, + \infty)$. 
The uniqueness of the gradient flow associated with $u$
in ${\Omega}\setminus K$ follows from the local
Lipschitz regularity of $\nabla u$ assumed therein. 

Let us now prove \eqref{f:Treg}.  Recall that $\mu$ is defined according to \eqref{f:maxu}. 
We first prove the following 

\smallskip \textsl {Claim: 
There exists a  set $L \subseteq (0, \mu)$ with $|L| = \mu$ such that, for all $m \in L$, the condition \eqref{f:Treg} is satisfied for 
$\mathcal H ^ {n-1} \hbox{-a.e.}\ x \in \{ u = m \}$.} 

\smallskip

Let us define $L$ as the set of values $m \in (0, \mu)$ such that  $u$ is twice differentiable $\mathcal H ^ {n-1}\hbox{a.e.}$ on $\{ u = m\}$.
By the coarea formula, if $Z$ is the set of points in $\Omega \setminus K$ where $u$ is not twice differentiable,   we have 
\begin{equation}\label{f:coarea}
0 = \int _{Z}  |\nabla u| \, dx = \int _0 ^ {\mu} \, dm \int _{\{ u  = m \} \cap Z } \, d \mathcal H ^ {n-1} (y) \,.
\end{equation}
We observe that $| \nabla u| $ remains strictly positive $\mathcal L ^n$-a.e.\ in $\Omega \setminus K$;
otherwise, since $u$ is twice differentiable $\mathcal L ^n$-a.e.\ in $\Omega \setminus K$, the pde $\Delta _\infty u = -1$ would not be satisfied. 
Hence we infer from \eqref{f:coarea} that, for ${\mathcal L}^1$-a.e. $m \in (0, \mu)$, the set $\{ u  = m \}\cap Z$ is $\mathcal H ^ {n-1}$-negligible, so that $L$ is of full measure in $(0, \mu)$.

From now on, let $m$ denote a fixed value in $L$. 
For $x \in \{ u = m \}$, set
\begin{equation}\label{f:N}
N (x) :=  \Big \{ t \in [0, T(x)] \ :\ u   \text{ is not twice differentiable at }  \X (t, x) \Big \}\,.
\end{equation}

By repeating the arguments given in the proof of
Lemma~\ref{l:approx2}, we obtain that
$\mathcal{L}^1(N(x)) = 0$ for
$\mathcal{H}^{n-1}$-a.e.\ $x \in \{ u = m \}$. 

Let us prove that \eqref{f:Treg} holds
for every $x_0\in \{ u = m \}$ such that both the conditions
$\mathcal{L}^1(N(x_0)) = 0$ and $u$ twice differentiable at $x_0$ hold. 

Let $x_0$ be such a point, and  let
$p(x_0) := \lim _{t \to T  (x_0)^-}\X (t, x_0)$
(observe that such limit exists since
$\frac{d}{dt} \X (t, x_0)$ is bounded).

Since $u$ is twice differentiable at $x_0$, it cannot be $\nabla u (x_0) = 0 $; otherwise, as already noticed above, the pde $-\Delta _\infty u = 1$ would not be satisfied. 

Then, by the very definition of $T(x_0)$,
in order to prove \eqref{f:Treg} is is enough
to show that $T(x_0) < +\infty$. Indeed in this case we have that $p (x_0) \in \partial \Omega \cup K$, but the possibility that $p (x_0) \in \partial \Omega$ is excluded by the fact that $u$ increases along the flow. 

Let us show that $T (x_0 ) < + \infty$. 
Let
\[
\gamma(t) := \X(t, x_0), \quad
\varphi(t) := u(\gamma(t)) \,,\qquad
t\in [0, T), \ T := T(x_0).
\]
Since $\mathcal{L}^1(N(x_0)) = 0$, the $P$-function is constant along $\gamma$. Then by Proposition \ref{p:P2}  for some $\lambda >m$ we have 
$\varphi (t) = \overline \varphi (t)$ for every $t \in [0, T)$, with 
\[
\overline{\varphi}(t) :=
\begin{cases}
\lambda - (\sqrt{\lambda-m} - t)^2,
&\text{if}\ t\in [0, \sqrt{\lambda-m}),\\
\lambda,
& \text{if}\ t\geq \sqrt{\lambda-m}\,.
\end{cases}
\] 

Let us show that $T = \sqrt{\lambda-m}$.
It is clear that $T\geq \sqrt{\lambda-m}$,
since $\dot\varphi(t) \neq 0$ for $t\in [0, \sqrt{\lambda-m})$,
so that $\gamma(t)\not\in K$ for $t\in [0, \sqrt{\lambda-m})$
because $\nabla u = 0$ on $K$
(recall that, by \cite{Lind}, $u$ is differentiable everywhere in $\Omega$).
On the other hand, the trajectory $\gamma$
enters in finite time $\sqrt{\lambda -m}$ in a point $p$
where $\nabla u(p) = 0$, which cannot happen
if $\nabla u$ is locally Lipschitz continuous in
a neighborhood of $p$ (since otherwise uniqueness would be contradicted). 
Hence $p \in K$, $\lambda = \mu$ 
and $T(x_0) = \sqrt{\lambda-m}$.
This concludes the proof of the claim. 

Finally, let us prove that the statement of the lemma follows from the claim. Let $F$ denote the set of points $x \in \Omega \setminus K$ such that \eqref{f:Treg} is false. Note that the complement of $F$, namely the set where \eqref{f:Treg} holds,  is closed by continuous dependence on initial data. In particular, this ensures that $F$ is $\mathcal L^n$-measurable. 
Then, by the claim and the coarea formula, we have
\begin{equation}\label{f:coarea2}
0 = \int _0 ^ {\mu} \, dm \int _{\{ u  = m \} \cap F } \, d \mathcal H ^ {n-1} (y) = \int _{F}  |\nabla u| \, dx \,.
\end{equation}
We recall that, since by assumption $u \in C ^ {1,1} (\Omega \setminus K)$, $u$ is twice differentiable 
$\mathcal L^n$-a.e.\ on $\Omega \setminus K$,  and hence $|\nabla u|>  0$ 
$\mathcal L^n$-a.e.\ on $\Omega \setminus K$ (because  the pde $\Delta _\infty u = -1$ is not fulfilled at points where $\nabla u = 0$).   Then by \eqref{f:coarea2} we deduce that $|F| = 0$. \qed

\begin{corollary}\label{c:P11} 
Let $\Omega\subset\R^n$ be a nonempty open bounded set,
and let $u$ be the solution 
to problem \eqref{f:dirich}.
Assume that   
$u\in C^{1,1}({\Omega}\setminus K)$.
Then
$$P (x) = \mu \qquad \forall x \in \Omega \setminus K \,.$$
In particular, if $\ipu$ holds, we have 
\begin{equation}\label{f:bordo}
\frac{|\nabla u|^ 4(y) }{4} = \mu \qquad \forall y  \in \partial \Omega\,.
\end{equation}
\end{corollary}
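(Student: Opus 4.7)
The plan is to extract from the proof of Lemma \ref{l:geo2} the fact that, for a.e.\ starting point $x\in\Omega\setminus K$, the $P$-function already takes the value $\mu$ at $x$, and then to propagate this information by continuity.

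First I would fix $x$ in the full-measure subset of $\Omega\setminus K$ produced by Lemma \ref{l:geo2}, so that the gradient flow $\X(\cdot,x)$ is uniquely defined on a finite interval $[0,T(x))$ with $\lim_{t\to T(x)^-}\X(t,x)\in K$. The hypothesis $u\in C^{1,1}(\Omega\setminus K)$ makes $P=\tfrac14|\nabla u|^4+u$ locally Lipschitz on $\Omega\setminus K$; composing with the Lipschitz curve $\X(\cdot,x)$ shows that $t\mapsto P(\X(t,x))$ is locally Lipschitz, hence absolutely continuous, on $[0,T(x))$. By Lemma \ref{l:easy} together with the area-formula argument used inside the proof of Lemma \ref{l:geo2} — which gives two-differentiability of $u$ along $\X(\cdot,x)$ at almost every time — one has $\frac{d}{dt}P(\X(t,x))=0$ a.e., and absolute continuity upgrades this to constancy of $P$ along the trajectory.

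Next I would identify the value of this constant. Setting $m=u(x)$ and $\lambda=P(x)$, Proposition \ref{p:P2} yields $u(\X(t,x))=\lambda-(\sqrt{\lambda-m}-t)^{2}$ on $[0,\sqrt{\lambda-m})$; since the limit $p:=\lim_{t\to T(x)^-}\X(t,x)$ lies in $K$ by Lemma \ref{l:geo2}, continuity of $u$ along the trajectory gives $\lambda=u(p)=\mu$, so that $P(x)=\mu$ for a.e.\ $x\in\Omega\setminus K$. Continuity of $P$ on $\Omega\setminus K$, itself inherited from the $C^{1,1}$ hypothesis, then extends this equality to every $x\in\Omega\setminus K$, proving the first assertion.

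For the second assertion, assumption $\ipu$ guarantees that $u$ is of class $C^1$ on a neighborhood of $\partial\Omega$ inside $\overline{\Omega}$, so $P$ extends continuously up to $\partial\Omega$. Since $K\Subset\Omega$, each $y\in\partial\Omega$ is approached from within $\Omega\setminus K$, and passing to the limit in $P\equiv\mu$ together with $u=0$ on $\partial\Omega$ gives \eqref{f:bordo}. The only delicate point in this argument is the absolute continuity of $P\circ\X(\cdot,x)$, which is precisely where the $C^{1,1}$ hypothesis is indispensable: without it the a.e.\ vanishing of the derivative supplied by Lemma \ref{l:easy} cannot be converted into true constancy of $P$ along the flow.
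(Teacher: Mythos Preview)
Your proof is correct and essentially parallels the paper's: both establish $P\equiv\mu$ a.e.\ on $\Omega\setminus K$ by combining the gradient-flow trajectories from Lemma~\ref{l:geo2}, the absolute continuity of $P\circ\X(\cdot,x)$ afforded by the $C^{1,1}$ hypothesis, and the terminal condition $\lim_{t\to T(x)^-}\X(t,x)\in K$, and then conclude by continuity of $P$ on $\Omega\setminus K$ (and, under $\ipu$, up to $\partial\Omega$). One small remark: your identification ``continuity of $u$ along the trajectory gives $\lambda=u(p)=\mu$'' via Proposition~\ref{p:P2} tacitly uses $T(x)\geq\sqrt{\lambda-m}$ so that $u(\X(t,x))\to\lambda$ as $t\to T(x)^-$; this is exactly what is shown inside the proof of Lemma~\ref{l:geo2}, which you cite, so the argument is complete once that reference is made explicit.
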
 

\proof Since by assumption $P$ is continuous on $\Omega \setminus K$, it is enough to show that the equality $P (x) = \mu$ holds almost everywhere on $\Omega \setminus K$. Namely, let us show that it holds for every $x \in \Omega \setminus K$ such that \eqref{f:Treg} holds and $\mathcal L ^ 1 (N(x)) = 0$. (Actually, both these conditions are satisfied up to a $\mathcal L^n$-negligible set, by the same arguments used in the proof of Lemma \ref{l:geo2}). Let $x \in \Omega \setminus K$ be such that \eqref{f:Treg} holds and $\mathcal L ^ 1 (N(x)) = 0$. 
Set $\gamma (t):= \X (t, x)$, for $t \in [0, T(x))$. Since $\mathcal L ^ 1 (N(x)) = 0$, $P$ is contant along $\gamma$ and, since \eqref{f:Treg} holds, we have $P (\gamma (t)) = \mu$ on $[0, T(x))$. In particular, $P (x) = \mu$. 
Finally, under assumption $\ipu$,  the equality \eqref{f:bordo} follows immediately by combining the equality $P (x) \equiv \mu$ holding on $\Omega \setminus K$ with the Dirichlet condition $u = 0$ satisfied on $\partial \Omega$. \qed

\bigskip

\begin{proposition}\label{p:notreg1}
Assume that $\Omega$ satisfy the following conditions: $\ipo$, $\Cut (\Omega ) \neq \high (\Omega)$, and 
there exists an inner  ball $B$ of radius $\rho _\Omega$ which meets $\partial \Omega$ at two points
lying on the same diameter of $B$. 
Further, assume that the unique solution to problem \eqref{f:dirich} satisfies $\ipu$. 
Then $ u \not \in C^ {1,1} (\Omega \setminus K)$. 
\end{proposition}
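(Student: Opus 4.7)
The plan is a short contradiction argument that packages together Corollary \ref{c:P11} with Theorem \ref{t:serrin2}. Suppose for contradiction that $u \in C^{1,1}(\Omega \setminus K)$. Under the standing assumptions $\ipo$ and $\ipu$, all hypotheses of Corollary \ref{c:P11} are met, so identity \eqref{f:bordo} gives
\[
\frac{|\nabla u(y)|^{4}}{4} = \mu \qquad \forall y \in \partial \Omega.
\]
In particular, $|\nabla u|$ takes the constant value $a := (4\mu)^{1/4}$ on $\partial \Omega$. Combined with the fact that $u$ already solves the Dirichlet problem \eqref{f:dirich}, this means precisely that $u$ is a solution to the overdetermined Serrin problem \eqref{f:serrin}.

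At this point, all the hypotheses of Theorem \ref{t:serrin2} are in force: $\ipo$ and $\ipu$ are assumed, the inner ball of radius $\rho_\Omega$ touching $\partial \Omega$ at two diametral points is available by assumption, and we have just verified that \eqref{f:serrin} admits a solution. Applying Theorem \ref{t:serrin2} therefore yields $\Cut(\Omega) = \high(\Omega)$, which directly contradicts the hypothesis $\Cut(\Omega) \neq \high(\Omega)$. Hence $u \notin C^{1,1}(\Omega \setminus K)$, as claimed.

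The substantive content has already been carried out in the preceding sections, so no real obstacle remains here: the only point requiring attention is to check that Corollary \ref{c:P11} is actually applicable, which is guaranteed by $\ipu$, and that the equality of $|\nabla u|$ on $\partial\Omega$ to a constant is precisely the missing Serrin boundary condition. No additional regularity, approximation, or geometric argument is needed beyond what is already available.
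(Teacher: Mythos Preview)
Your proof is correct and follows essentially the same route as the paper's own argument: assume $u\in C^{1,1}(\Omega\setminus K)$, invoke Corollary~\ref{c:P11} under $\ipu$ to obtain constant $|\nabla u|$ on $\partial\Omega$, and then apply Theorem~\ref{t:serrin2} to deduce $\Cut(\Omega)=\high(\Omega)$, contradicting the hypothesis. The only cosmetic difference is that you spell out the value $a=(4\mu)^{1/4}$ explicitly, which the paper leaves implicit.
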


\proof  Assume by contradiction that $u\in C^ {1,1} (\Omega \setminus K)$. Then, by assumption $\ipu$ and Corollary 
\ref{c:P11}, we have that \eqref{f:bordo} holds. Hence, $u$ is a solution to the overdetermined boundary value problem $(\ref{f:serrin})$. Since we have assumed also $\ipo$ and the existence of an inner ball $B$ which meets $\partial \Omega$ at two diametral points, by Theorem \ref{t:serrin2} we infer that
$\Cut (\Omega ) = \high (\Omega)$, contradiction. \qed 

\bigskip
The assumptions made on $\Omega$ in the above proposition are satisfied for instance when $\Omega$ is an ellipse. 
For general domains, assuming 
that the solution $u$ to problem \eqref{f:dirich}  is $C ^{1,1}$ near $K$, we obtain the following result
which gives an indication that the optimal expected regularity of $u$ (up to $K$) 
is $C ^ {1,1/3}$.

\begin{proposition}\label{p:notreg2}
Let $u$ be the solution
to problem \eqref{f:dirich}, and let $A$ be a neighborhood of $K$.

Assume that $u\in C^{1,1}(A \setminus K)$.
Then for every $\alpha > 1/3$ it holds $u \not \in C ^ {1, \alpha} (A)$. 
\end{proposition}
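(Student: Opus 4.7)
I would argue by contradiction, using Corollary \ref{c:P11} to reduce the problem to an explicit ODE analysis of the gradient flow as it approaches $K$.

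First, I would establish a local version of Corollary \ref{c:P11} on a suitable neighborhood of $K$. Since $A$ is open and $u$ attains its maximum $\mu$ precisely on $K$, one can choose $\delta>0$ so small that $U := \{x\in\Omega : u(x)>\mu-\delta\}$ is contained in $A$. Because $u$ is nondecreasing along forward gradient trajectories, any such trajectory starting in $U\setminus K$ stays in $U\setminus K$ until it reaches $K$. Repeating the arguments of Lemma \ref{l:geo2} and Corollary \ref{c:P11} along these trajectories, which only require the $C^{1,1}$ regularity of $u$ on $A\setminus K$, we obtain
\[
P(x) = \mu \qquad \forall\, x\in U\setminus K,
\]
equivalently $|\nabla u(x)|^4 = 4(\mu - u(x))$ on $U\setminus K$.

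Next, I would fix $x\in U\setminus K$ and consider the forward gradient flow $\gamma(t):=\X(t,x)$, defined on $[0,T)$ with $T<+\infty$ and $\gamma(T^-) = x_0\in K$. Since $P\equiv\mu$ along $\gamma$, Proposition \ref{p:P2} (applied with $\lambda=\mu$ and $m=u(x)$) yields $T=\sqrt{\mu-m}$ together with the explicit formulas
\[
u(\gamma(t)) = \mu - (T-t)^2, \qquad |\nabla u(\gamma(t))| = \sqrt{2}\,(T-t)^{1/2} \qquad \forall\, t\in [0,T).
\]
Integrating $|\dot\gamma(s)| = |\nabla u(\gamma(s))|$ over $[t,T)$ gives the length bound
\[
|\gamma(t) - x_0| \leq \int_t^T \sqrt{2}\,(T-s)^{1/2}\,ds = \tfrac{2\sqrt{2}}{3}\,(T-t)^{3/2},
\]
so $(T-t)^{1/2}\geq c_1\,|\gamma(t)-x_0|^{1/3}$ for some universal $c_1 > 0$. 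Combined with the formula for $|\nabla u(\gamma(t))|$, this yields the crucial lower bound
\[
|\nabla u(\gamma(t))| \geq c_2\, |\gamma(t) - x_0|^{1/3}\qquad\text{as } t\to T^-.
\]

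On the other hand, if $u\in C^{1,\alpha}(A)$ for some $\alpha > 1/3$, then since $x_0\in K$ is a point of maximum we have $\nabla u(x_0) = 0$, and hence
\[
|\nabla u(\gamma(t))| = |\nabla u(\gamma(t)) - \nabla u(x_0)| \leq C\, |\gamma(t) - x_0|^{\alpha}.
\]
Comparison with the lower bound forces $|\gamma(t) - x_0|^{\alpha - 1/3} \geq c_2/C$, which is a contradiction as $t\to T^-$, since $\alpha - 1/3 > 0$ and $\gamma(t)\to x_0$.

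The main obstacle I foresee lies in the first step: justifying that Corollary \ref{c:P11} can be localized to a neighborhood of $K$ using only the $C^{1,1}$ regularity of $u$ on $A\setminus K$, rather than on all of $\Omega\setminus K$. Once $\delta$ is chosen so that $\{u>\mu-\delta\}\subset A$, the monotonicity of $u$ along forward trajectories confines them to $A\setminus K$, and the flow and supremum-convolution approximation techniques developed in Section \ref{secP} should go through locally; verifying this localization rigorously is the only delicate point of the argument, after which the ODE comparison above is essentially routine.
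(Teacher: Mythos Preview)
Your proposal is correct and shares the paper's overall architecture: localize to a sublevel neighborhood $\{u>\mu-\delta\}\subset A$, use the $C^{1,1}$ regularity there to run the gradient flow and obtain $P\equiv\mu$ along a trajectory $\gamma$ approaching $K$ in finite time $T$, and then extract a contradiction from the explicit formula $u(\gamma(t))=\mu-(T-t)^2$. The localization step you flag as delicate is exactly what the paper does, by re-running the arguments of Lemma~\ref{l:geo2} on $E=\{u\geq m\}\subset A$ rather than invoking Corollary~\ref{c:P11} as a black box.

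The genuine difference lies in the final comparison. The paper never uses the explicit formula $|\nabla u(\gamma(t))|=\sqrt{2}\,(T-t)^{1/2}$ to estimate the distance $z(t)=|\gamma(t)-\gamma(T)|$; instead it uses only the assumed H\"older bound $|\nabla u(\gamma(t))|\leq C\,z(t)^{\alpha}$ to get the differential inequality $\dot z\geq -C\,z^{\alpha}$, yielding $z(t)\leq C_1(T-t)^{1/(1-\alpha)}$, and then feeds this back into $\mu-\varphi(t)=\int_t^T|\nabla u(\gamma(s))|^2\,ds\leq C^2\int_t^T z(s)^{2\alpha}\,ds$ to obtain $(T-t)^2\leq C_2(T-t)^{(1+\alpha)/(1-\alpha)}$, which forces $\alpha\leq 1/3$. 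Your route is more direct: you integrate the \emph{exact} speed $|\dot\gamma|=\sqrt{2}\,(T-t)^{1/2}$ to bound $|\gamma(t)-x_0|$ from above, invert to get a lower bound $|\nabla u(\gamma(t))|\geq c_2\,|\gamma(t)-x_0|^{1/3}$, and compare with the H\"older upper bound. Your version avoids the ODE comparison and uses the $C^{1,\alpha}$ hypothesis only once, so it is a little cleaner; the paper's version, by contrast, would adapt more readily to situations where one knows $P$ is constant but does not have such an explicit expression for $|\nabla u|$ along the flow.
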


\begin{proof}  
Assume that $u\in C^{1,\alpha}(A)$, 
i.e.\ $u\in C^1(A)$ and there exists $C>0$ such that
\[
|\nabla u (x) - \nabla u(y)|
\leq C |x-y|^{\alpha}
\qquad \forall x,y\in A\, .
\] 
We are going to show that necessarily it must be $\alpha \leq 1/3$.

Let us choose $m$ such that
$E:=\{x\in\Omega:\ u(x) \geq  m \} \subset A $.

Since $u \in C ^ {1,1} (E \setminus K)$ and $u$ is differentiable on $K$, we can associate with the restriction of $u$ to $E$ the gradient flow $\X$ according to Lemma 
\ref{l:geo2}. 

Let $L \subseteq (0, \mu)$ be as in the Claim contained in the proof of Lemma \ref{l:geo2}. 
Let $m' \in L \cap (0, m)$ be such that \eqref{f:Treg} holds $\mathcal H  ^ {n-1}$-a.e.\ on $\{ u = m'\}$. 
By repeating the arguments given in the proof of
Lemma~\ref{l:approx2}, we obtain that
$\mathcal{L}^1(N(x)) = 0$ for
$\mathcal{H}^{n-1}$-a.e.\ $x \in \{ u = m' \}$, with $N (x)$ defined as in \eqref{f:N}. 

Then we can pick  
 $x_0 \in \{ u = m' \}$ be such that $\mathcal L ^ 1 (N (x_0)) = 0$ and  \eqref{f:Treg} holds at $x_0$. Set
\[
\gamma(t) := \X(t, x_0), \quad
\varphi(t) := u(\gamma(t)),\qquad
t\in [0, T), \ T := T(x_0).
\]
Since $\mathcal L ^ 1 (N (x_0)) = 0$, the $P$-function is constant along $\gamma$. Moreover, since  \eqref{f:Treg} holds at $x_0$, the value of the constant is equal to $\mu$, namely it holds 
\[
P(\gamma(t)) =
\frac{1}{4} |\nabla u(\gamma(t))|^4 + u(\gamma(t))
= \mu 
\qquad\forall t\in [0, T). 
\] 
Then, by Proposition \ref{p:P2}, we have $\varphi (t) = \overline \varphi (t)$ for every $t \in [0, T)$, with 
\[
\overline{\varphi}(t) :=
\begin{cases}
\mu - (\sqrt{\mu-m} - t)^2,
&\text{if}\ t\in [0, \sqrt{\mu-m}),\\
\mu,
& \text{if}\ t\geq \sqrt{\mu-m}\,.
\end{cases}
\] 
We recall that the trajectory $\gamma$ cannot
reach a maximum point of $u$ 
in a time $t < T$,
whereas it approaches $K$ as $t\to T^-$,
i.e.
\begin{equation}\label{f:sottomax}
\varphi(t) < \mu
\quad\forall t\in [0, T)\,,
\qquad
\lim_{t\to T^-} \varphi(t) \in K\,.
\end{equation}
Thus we deduce that
$T$ is finite and
\begin{equation}\label{f:varphi}
T = \sqrt{\mu-m},\qquad
\varphi(t) = \mu - (T- t)^2
\quad \forall t\in [0, T].
\end{equation}
(For later convenience we have extended $\varphi$
up to time $T$.)

Incidentally, notice that the finitenes of $T$ shown in \eqref{f:varphi} already implies that $u \not \in C ^ {1,1} (A)$ (otherwise 
by uniqueness it should be $T = + \infty$).  
For every $t\in [0, T]$ we have 
\begin{equation}
\label{f:stimaalpha}
\begin{split}
\mu - \varphi(t) & =
\varphi(T) - \varphi(t)
= \int_t^T |\nabla u(\gamma(s))|^2\, ds
\\ & =
\int_t^T |\nabla u(\gamma(s)) - \nabla u(\gamma(T))|^2\, ds
\leq
C^2 \int_t^T |\gamma(s)-\gamma(T)|^{2\alpha}\, ds\,.
\end{split}
\end{equation}
In order to estimate the last integral in~\eqref{f:stimaalpha},
let us consider the auxiliary function
$z(t) := |\gamma(t)-\gamma(T)|$, $t\in [0, T]$.
Since $z(t) > 0$ for every $t\in [0, T)$,
for such values of $t$ we have 
\[
\begin{split}
\dot{z}(t) & = \dot{\gamma}(t) \cdot
\frac{\gamma(t)-\gamma(T)}{|\gamma(t) - \gamma(T)|}
\geq - |\dot{\gamma}(t)|
= - |\nabla u(\gamma(t))|
\\ & = - |\nabla u(\gamma(t)) - \nabla u(\gamma(T))|
\geq - C|\gamma(t) - \gamma(T)|^{\alpha}
= -C \, z(t)^{\alpha}\,.
\end{split}
\]
Since the maximal solution in $[0,T]$ of the
Cauchy problem
$\dot{z} = - C\, z^{\alpha}$, $z(T) = 0$, is 
\[
\overline{z}(t) :=
\left[
C(1-\alpha)(T-t)\right]^{1/(1-\alpha)}
,
\qquad
t\in [0, T],
\]
we conclude that
\begin{equation}
\label{f:disd}
|\gamma(t)-\gamma(T)| = z(t) \leq
\overline{z}(t)
= C_1\, (T-t)^{1/(1-\alpha)}, \qquad
\forall t\in [0, T]\,,
\end{equation}
where 
$C_1:= \left[C(1-\alpha)\right]^{1/(1-\alpha)}$.
By  \eqref{f:stimaalpha} and \eqref{f:disd},
we deduce that
\[
\mu-\varphi(t) \leq C_2 \int_t^T (T-s)^{2\alpha/(1-\alpha)}\, ds
= C_2 (T-t)^{(1+\alpha)/(1-\alpha)}\,,
\qquad\forall t\in [0,T],
\]
where $C_2 := C^2\, C_1^{2\alpha}$.
Taking into account the explicit form of $\varphi$
given in~\eqref{f:varphi}
we obtain 
\[
(T- t)^2 \leq
C_2 (T-t)^{(1+\alpha)/(1-\alpha)}\,,
\qquad \forall t\in [0,T],
\]
that clearly cannot be satisfied if $2 < (1+\alpha)/(1-\alpha)$,
i.e.\ if $\alpha > 1/3$.
\end{proof}

\bigskip

{\bf Acknowledgments.} We gratefully acknowledge Filippo Gazzola and Bernd Kawohl for sharing some interesting discussions
about the argument used in Proposition \ref{p:P1}.


\def\cprime{$'$}

\end{document}